\newtheorem{theorem}{Theorem}[section]
\newtheorem{lemma}[theorem]{Lemma}
\newtheorem{proposition}[theorem]{Proposition}
\newtheorem{corollary}[theorem]{Corollary}
\theoremstyle{definition}
\theoremstyle{remark}
\newtheorem{remark}[theorem]{Remark}
\numberwithin{equation}{section} \errorcontextlines=0
\newcommand{\diag}{\mbox{diag}}
\newcommand{\cdet}{\mathrm{cdet}}
\newcommand{\rdet}{\mathrm{rdet}}
\newcommand{\cper}{\mathrm{cper}}
\newcommand{\rper}{\mathrm{rper}}
\newcommand{\ot}{\otimes}
\newcommand{\si}{\sigma}
\newcommand{\gl}{\mathfrak{gl}}
\newcommand{\qdet}{\mathrm{qdet}}
\begin{document}

\title[Quantum algebra of multiparameter Manin matrices]
{Quantum algebra of multiparameter Manin matrices}

\author{Naihuan Jing}
\address{Department of Mathematics, North Carolina State University, Raleigh, NC 27695, USA}
\email{jing@ncsu.edu}

\author{Yinlong Liu}
\address{School of Mathematics and Statistics,
	Central China Normal University, Wuhan, Hubei 430079, China}
\email{yliu@mails.ccnu.edu.cn}

\author{Jian Zhang}
\address{School of Mathematics and Statistics,
and Hubei Key Laboratory of Mathematical Sciences,
Central China Normal University, Wuhan, Hubei 430079, China}
\email{jzhang@ccnu.edu.cn}

\thanks{{\scriptsize
\hskip -0.6 true cm MSC (2020): Primary: 05E10; Secondary: 17B37, 58A17, 15A75, 15B33, 15A15.
\newline Keywords: Manin matrices, quantum algebras, $q$-Yangians, McMahon master theorem, Capelli identities.
}}

%\thanks{{\scriptsize\hskip -0.6 true cm *Corresponding author}}

\begin{abstract}
Multiparametric quantum semigroups $\mathrm{M}_{\hat{q}, \hat{p}}(n)$ are generalization of the one-parameter general linear semigroups $\mathrm{M}_q(n)$, where $\hat{q}=(q_{ij})$ and $\hat{p}=(p_{ij})$ are $2n^2$ parameters satisfying certain conditions. In this paper, we study the algebra of multiparametric Manin matrices using
the R-matrix method. The systematic approach enables us to obtain
several classical identities such as Muir's identities, Newton's identities, Capelli-type identities, Cauchy-Binet's identity both for determinant and permanent
as well as a rigorous proof of the MacMahon master equation for the quantum algebra of multiparametric Manin matrices.
Some of the generalized identities are also lifted to multiparameter $q$-Yangians.
\end{abstract}
\maketitle

\section{Introduction}
 An $N\times N$ square matrix $M=(M_{ij})$ over an associative algebra $\mathcal A$ is called a {\it Manin matrix} if its entries satisfy the following relations:
\begin{equation}\label{e:Ma1}
M_{ik}M_{jl}-M_{jl}M_{ik}=M_{jk}M_{il}-M_{il}M_{jk}
\end{equation}
for all indices $i, j, k, l\in\{1, \ldots, N\}$. If $\mathcal A$ is commutative, then any square matrix satisfies \eqref{e:Ma1}.
Relations \eqref{e:Ma1} imply that all entries on the same column pairwise commute. The general entries of $M$
satisfy interesting algebraic relations in various classes of Manin matrices, which include the generator matrices of the Lie algebra $\mathfrak{gl}_N$, the affine Kac-Moody Lie algebra $\widehat{\mathfrak{gl}}_N$ and the
Yangian $Y({\mathfrak{gl}}_N)$ (cf. \cite{Mo}).

The matrix $M$ \eqref{e:Ma1} is in fact a ``$q=1$'' version of the $q$-Manin matrix introduced by Manin
in the study of quantum groups \cite{Ma1, Ma2}. He showed that the so-called quantum general linear semigroup can be viewed as the transformation group of a pair of quantum hyperplanes under comultiplication, i.e. the images of $y=Mx$ and $y=M^Tx$ both satisfy the quantum plane conditions: $x_jx_i=qx_ix_j, i<j$ if and only if $M$ is a quantum general linear semigroup matrix. Equivalently, it can also be viewed as the transformation group under comudule action preserving the quantum plane $x_jx_i=qx_ix_j, i<j$ and quantum exterior algebra $x_jx_i=-qx_ix_j, i<j$
and $x_i^2=0$.

We will consider an even more general version of Manin matrix $M=(M_{ij})$ whose noncommutative entries belong to
the quadratic algebra subject to the relations \cite{JZ2, S}:
%\cite[Sect. 3.3]{S}:
\begin{align}
 &M_{ik}M_{jk}=q_{ji}M_{jk}M_{ik}, \quad 1\leq i<j\leq n, 1\leq k\leq m, \\
&M_{ik}M_{jl}-q_{ji}p_{kl}M_{jl}M_{ik}+ p_{kl}M_{il}M_{jk}- q_{ji}M_{jk}M_{il}=0.
%&M_{ik}M_{il}-q_{ji}M_{jk}M_{il}=-p_{kl}(M_{il}M_{jk}-q_{ji}M_{jl}M_{ik})
\end{align}
which are exactly the transformation matrix for the comultiplication map obeying the quantum plane relations $x_jx_i=p_{ij}x_ix_j$ and $y_jy_i=q_{ij}y_iy_j$, i.e. they form a multiparametric quantum general linear semigroup.

To pass from quantum semigroups to quantum groups in Hopf algebras for one, two or multi-parameter cases \cite{RTF, BW, R}
is to add an antipode. In our general case of Manin matrices, we introduce the multi-parametric quantum column determinant \cite{Ma2}:

\begin{equation}
{\cdet}_{\hat{q}}(M)=\sum_{\sigma\in S_n}\varepsilon(\hat q, \sigma)M_{\sigma(1),1}\cdots M_{\sigma(n),n}
\end{equation}
where $$\varepsilon(\hat q, \sigma)=\prod_{i<j\atop \sigma(i)>\sigma(j)}(-q_{\sigma(i)\sigma(j)}).$$

The quantum general linear algebra offers a platform to generalize many classical identities. One of the first was the Cayley-Hamilton theorem  on the relation between determinant and linear operator, which was first generalized to one-parameter group \cite{JJZ} and later for multi-parameter general linear group. Another is the MacMahon master theorem \cite{Mac, EP}, one of the jewels of classical invariant theory. MacMahon's theorem was also first proved for special case of the quantum general linear group in a nontrivial way \cite{GLZ}.
In \cite{MR} a new method of R-matrices was offered for the more general context of quantum general linear supergroups.

%Various works have been devoted to the quantum algebras of the general linear group, and many of the results
%have been scattered in the literature, and there are also some insufficient arguments
%regarding the classical identities such as the MacMahon identities for multi-parametric quantum groups.
%One of the goals in this work is to build our work on the solid background of the quantum Yang-Baxter equation
%and Manin matrices, as the $R$-matrix method seems to be an effective method to study the structures of quantum groups
%and offers a coherent way to study general quantization of the known structures.

In this paper, using the R-matrix method we revisit the theory of Manin matrices and offer systematic proofs of
 some important identities and also prove new ones involving with the quantum determinant and their generalizations. We will show rigorously in the general context that the quantum MacMahon master theorem also holds for the quantum multiparametric group. As new identities, for example, we show that the Muir theorem holds in the multiparametric case as well.

We also obtain Capelli-type identities and Cayley-Hamilton's theorem for multiparametric Manin matrices and
multparameter $q$-Yangians.

The paper is organized as follows. In Section \ref{s:manin} we study general properties of Manin matrices in the multiparametric case. Then we introduce the minor determinant of  the $(A_{\hat{q}},\ A_{\hat{p}})$-Manin matrix and derive their various properties: the Laplace expansion, the Pl\"ucker relations and the Cauchy-Binet identity. In Section \ref{s:gen-identities} we further generalize several well-known identities to Manin matrices. These include the well-known Jacobi's ratio identity, Cayley's complimentary identity, the Muir law,
and the Sylvester theorem. In Section \ref{s:gen-CauBin formulas} we study Capelli-type identities for determinants and permanents of  Manin matrices. In Section \ref{s:macmahon} the MacMahon master theorem is generalized to the general $A$-Manin matrices and our proof clarifies some of the missing points in the literature. In section \ref{s:cayley-hamilton}, we also generalize the Cayley-Hamilton theorem to the situation of Manin matrices. Finally in section \ref{s:multiYangian}, we introduce multiparametric Yangians and derive generalized versions of the Cayley-Hamilton theorem, Muir's identity and Newton's identity using Manin-matrices.

\section{Manin matrices}\label{s:manin}

Manin matrices dated back to Manin \cite{Ma1, Ma2}, see \cite{CFR, CFRS, S} for detailed accounts on Manin matrices, see also Molev's book \cite{Mo} for some recent applications in Yangians.
In the following we will focus on a generalized version of Manin matrices introduced by Silantyev \cite[Sect. 3.3]{S}.

Let $A\in \mathrm{End}(\mathbb{C}^{n}\otimes \mathbb{C}^{n})$
and $B\in \mathrm{End}(\mathbb{C}^{m}\otimes \mathbb{C}^{m})$
 be two idempotents and $\mathfrak{R}$ an algebra. We follow the convention that an endomorphism acts on column vectors from the left and row vectors from the right.
Define $\mathfrak{X}_{B}(\mathbb{C})$ and  $\Xi_{A}(\mathbb{C})$ as the (complex) quadratic algebras generated by the elements $x_{1}, \ldots, x_m$ and $\psi_{1}, \ldots, \psi_{n}$ over $\mathbb C$ respectively with the following  relations:
\begin{align}\label{e:relB}
&BX\otimes X=0,\\ \label{e:relA}
&(\Psi\otimes\Psi)(1-A)=0,
\end{align}
where $X=(x_1,\ldots,x_m)^t$ and $  \Psi=(\psi_1, \ldots, \psi_n).$

Extending the coefficients from $\mathbb C$ to $\mathfrak R$, we denote the algebra
$\mathfrak{R}\otimes \mathfrak{X}_{B}(\mathbb{C})$
($\mathfrak{R}\otimes \Xi_{A}(\mathbb{C})$ resp.)
by $ \mathfrak{X}_{B}(\mathfrak{R})$
( $\Xi_{A}(\mathfrak{R} )$ resp.).
Let $M$ be an $n\times m$ matrix with entries $M_{ij}\in \mathfrak{R}$.
 The space $\mathrm{Hom}(\mathfrak{R}^{m}, \mathfrak{R}^{n})\otimes \mathrm{Hom}(\mathfrak{R}^{m}, \mathfrak{R}^{n})$ is an $(\mathrm{End}(\mathfrak{R}^m), \mathrm{End}(\mathfrak{R}^n))$-bimodule with the canonical left and right action. For an element $M\in
\mathrm{Hom}(\mathfrak{R}^{m}, \mathfrak{R}^{n})$, we define
\begin{equation*}
M_1=M\otimes 1 , \qquad M_2=1\otimes M \in \mathrm{Hom}(\mathfrak{R}^{m}, \mathfrak{R}^{n})^{\otimes 2}.
\end{equation*}

Define $Y=MX$ and $\Phi=\Psi M$, i.e.
\begin{equation}\label{e:y-phi}
y_i=\sum_{j=1}^mM_{ij}x_j, \qquad \phi_j=\sum_{i=1}^n\psi_iM_{ij}.
\end{equation}
%for $1\leq i\leq n$, $1\leq j\leq m$.
\begin{proposition}\label{p:comm}
The following  three conditions are equivalent:
\begin{align}\label{relation manin matrix}
&A M_{1}M_{2}(1-B)=0\\
&A(Y\otimes Y)=0\\
&(\Phi\otimes\Phi)(1-B)=0
\end{align}
\end{proposition}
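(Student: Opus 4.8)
The plan is to collapse the three conditions into a single purely linear‑algebraic identity over $\mathbb{C}$, namely \eqref{relation manin matrix}, by invoking the defining quadratic relations of $\mathfrak{X}_{B}(\mathfrak{R})$ and $\Xi_{A}(\mathfrak{R})$ and then stripping off the noncommutative coefficients by a linear‑independence argument.

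First I would put the second and third conditions into $R$-matrix form. Since $\mathfrak{R}$ commutes with $\mathfrak{X}_{B}(\mathbb{C})$ inside $\mathfrak{X}_{B}(\mathfrak{R})$, and likewise for $\Xi_{A}$, a direct expansion of \eqref{e:y-phi} gives $Y\ot Y=M_{1}M_{2}(X\ot X)$ and $\Phi\ot\Phi=(\Psi\ot\Psi)M_{1}M_{2}$, where $M_{1}M_{2}\in\mathrm{Hom}(\mathfrak{R}^{m},\mathfrak{R}^{n})^{\ot 2}$ is the operator whose $((i,k),(j,l))$-entry is $M_{ij}M_{kl}$. Thus the second condition reads $AM_{1}M_{2}(X\ot X)=0$ in $\mathfrak{X}_{B}(\mathfrak{R})$, and the third reads $(\Psi\ot\Psi)M_{1}M_{2}(1-B)=0$ in $\Xi_{A}(\mathfrak{R})$.

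The heart of the argument is the following lemma, which I would apply on both sides. For a matrix $C$ with entries in $\mathfrak{R}$ and columns indexed by $\mathbb{C}^{m}\ot\mathbb{C}^{m}$, the relation $C(X\ot X)=0$ holds in $\mathfrak{X}_{B}(\mathfrak{R})$ if and only if $C(1-B)=0$; dually, for a matrix $C$ with entries in $\mathfrak{R}$ and rows indexed by $\mathbb{C}^{n}\ot\mathbb{C}^{n}$, the relation $(\Psi\ot\Psi)C=0$ holds in $\Xi_{A}(\mathfrak{R})$ if and only if $AC=0$. To prove the first statement, recall that the degree‑two component of the quadratic algebra $\mathfrak{X}_{B}(\mathbb{C})$ is $(\mathbb{C}^{m})^{\ot 2}$ modulo the span of the relations \eqref{e:relB}, that is, modulo the row space of $B$, under the identification $x_{j}x_{l}\leftrightarrow e_{j}\ot e_{l}$. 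Hence a scalar quadratic relation $\sum_{j,l}c_{jl}x_{j}x_{l}=0$ holds in $\mathfrak{X}_{B}(\mathbb{C})$ exactly when the row vector $(c_{jl})_{j,l}$ lies in the row space of $B$, which, $B$ being idempotent, is equivalent to $(c_{jl})_{j,l}(1-B)=0$. Writing $C=\sum_{\alpha}r_{\alpha}N_{\alpha}$ with $\{r_{\alpha}\}$ a basis of $\mathfrak{R}$ and $N_{\alpha}$ scalar matrices, the linear independence of the $r_{\alpha}$ in $\mathfrak{R}\ot\mathfrak{X}_{B}(\mathbb{C})$ shows $C(X\ot X)=0$ iff $N_{\alpha}(X\ot X)=0$ for all $\alpha$ iff $N_{\alpha}(1-B)=0$ for all $\alpha$ iff $C(1-B)=0$. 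The dual statement is the transpose of this: the relations \eqref{e:relA} span the column space of $1-A$, which equals $\ker A$ since $A^{2}=A$, and the same linear‑independence argument over $\mathfrak{R}$ gives the equivalence.

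Finally I would assemble the pieces. Applying the first half of the lemma with $C=AM_{1}M_{2}$ turns the second condition into $AM_{1}M_{2}(1-B)=0$, which is \eqref{relation manin matrix}; applying the dual half with $C=M_{1}M_{2}(1-B)$ turns the third condition into the same identity. Therefore all three conditions are equivalent to \eqref{relation manin matrix}, hence to each other. The step I expect to require the most care is the lemma in the preceding paragraph: one must track which side acts on which tensor factor (row space of $B$ versus column space of $1-A$), use idempotency precisely to replace ``lies in the span of the relations'' by the clean conditions $C(1-B)=0$ and $AC=0$, and justify the passage from $\mathfrak{R}$-valued matrices back to scalar ones; everything else is routine bookkeeping.
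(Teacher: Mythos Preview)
The paper states this proposition without proof, so there is nothing to compare against; your argument is correct and supplies exactly the details the paper omits. The key step---using idempotency of $B$ (respectively $A$) to identify ``lies in the span of the quadratic relations'' with the clean linear condition $C(1-B)=0$ (respectively $AC=0$), and then passing from $\mathfrak{R}$-valued to scalar matrices via a basis of $\mathfrak{R}$---is the right one, and your identification of the relation subspace with the row space of $B$ and the column space of $1-A=\ker A$ is accurate.
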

An $n\times m$ matrix $M$ over an algebra $\mathfrak{R}$ satisfying the equation \eqref{relation manin matrix} is called an {\it $(A,\ B)$-Manin matrix}.

Let $P_A=1-2A$, $P_B=1-2B$, $S_A=1+P_A$, $S_B=1+P_B,$ then $P_A^2=P_B^2=1$.
Note that the quadratic  relation can be written as
\begin{align}
&P_AX\otimes X=X\otimes X,\\
&(\Psi\otimes\Psi)P_A=0.
\end{align}
The relation \eqref{relation manin matrix} is equivalent to
\begin{align}
&(1-P_A)M_1M_2(1+P_B)=0.
\end{align}

An $n\times n$ matrix $\hat{q}=(q_{ij})$ is called a  parametric matrix if its entries $q_{ij},1\leq i,j\leq n$ satisfying the conditions
\begin{equation}
q_{ij}q_{ji}=1,\qquad q_{ii}=1.
\end{equation}
So the transpose $\hat{q'}=(q_{ji})$ is also a parametric matrix.

Define $P_{\hat{q}}=\sum_{i,j=1}^n q_{ji}E_{ij}\otimes E_{ji}$.
Then $P_{\hat{q}}^2=1$ , $A_{\hat{q}}=\frac{1-P_{\hat{q}}}{2}$ and $S_{\hat{q}}=\frac{1+P_{\hat{q}}}{2}$ are idempotents.
The $m\times m$  parametric matrix $\hat{p}=(p_{ij})$, operator $P_{\hat{p}}$, idempotents $A_{\hat{p}}$ and $S_{\hat{p}}$ are  defined similarly.

Explicitly the quadratic relation \eqref{e:relB} of  $\mathfrak{X}_{A_{\hat{p}}}(\mathbb{C})$ is written as
\begin{equation}
x_{j}x_{i}=p_{ij}x_{i}x_{j},  \qquad 1\leq i,j\leq m.
\end{equation}
Similarly, the algebra $\Xi_{A_{\hat{q}}}(\mathbb{C})$ is defined by
\begin{equation}
 \psi_{i}^2=0, \quad  \psi_{j}\psi_{i}=-q_{ji}\psi_{i}\psi_{j}\  \text{for}  \quad 1\leq i\neq j\leq m.
\end{equation}

An {\it $(A_{\hat{q}},\ A_{\hat{p}})$-Manin matrix} $M$ is an $n\times m$ matrix over an algebra $\mathfrak{R}$ satisfying the following relation:
\begin{equation}\label{qp manin}
A_{\hat{q}}M_1M_2S_{\hat{p}}=0.
\end{equation}
In terms of entries this relation can be written as
  \begin{equation}\label{rel manin matrix}
  \begin{split}
  &M_{ik}M_{jk}=q_{ji}M_{jk}M_{ik}, \quad 1\leq i<j\leq n, 1\leq k\leq m, \\
&M_{ik}M_{jl}-q_{ji}p_{kl}M_{jl}M_{ik}+ p_{kl}M_{il}M_{jk}- q_{ji}M_{jk}M_{il}=0,
i<j,k<l.
\end{split}
\end{equation}
We will simply refer this $M$ as a $(\hat{q},\hat{p})$-{\it Manin matrix}.
 If  $n=m$ and  $\hat{q}=\hat{p}$, it is called a $(\hat{q})$-{\it Manin matrix}.

A multi-index is a tuple $I=(i_1,i_2,\ldots,i_r)$ of integers in $\{1,2,\ldots,n\}$.
The $\varepsilon$ symbol associated to the parameters $q_{ij}$ is defined by
\begin{equation}
\varepsilon(\hat q, I)
=
\left\{ \begin{aligned}
&0,\ &if\ two\ i's\ coinside,\\
&\prod_{s<t\atop i_s>i_t}(-q_{i_{s}{i_t}}) \ &if\ i's\ are\ distinct.\\
\end{aligned} \right.
\end{equation}
A multi-index $I=(i_1,i_2,\ldots,i_r)$ is called increasing multi-index if $i_1<i_2 <\cdots <i_r$, denoted by $I=(i_1<i_2<\ldots<i_r)$.

We denote by $I\oplus J=(i_1,\ldots,i_{r},j_1,\ldots,j_k)$ the juxtaposition %contraction
of two multi-indices
$I=(i_1,\ldots,i_{r})$, $J=(j_1,\ldots,j_k)$.
Let $I=(i_1,\ldots,i_{r})$, $K=(k_1,\ldots,k_s)$ be multi-indices.
We say $I$ is contained in $K$ (still denoted as
$I\subset K$)
if there exist distinct $1', \ldots, r'\in\{1, \ldots, s\}$
such that $i_a=k_{a'}$ for $1\le a\le s$.
%{\color{red}$\{i_a|1\leq a\leq r\}\subset \{k_b|1\leq b\leq s\}$}
%$I$ is contained in $K$ as sets.
%there exists $k_b=i_a$ for any $1\leq a\leq r$, and denote
%$I\subset K$.

Let $I=(i_1,\ldots,i_{r})$ be a multi-index, $K=(k_1,\ldots,k_s)$ be another multi-index of increasing
integers and $I\subset K$.
Denote by $I^c$ %$I^c$
the multi-index
$(k_1,\ldots,\hat{i_1},\ldots,\hat{i_r},\ldots,k_s)$ obtained from
 $(k_1,k_2,\ldots,k_s)$ by deleting $i_1,\ldots,i_r$, denoted $K\setminus I$.
 If $K=\{1,2,\ldots,n\}$ then $I^c=K \setminus I$ is increasing for any $I$.

Let $I=(i_1<i_2<\ldots<i_r)$ be a multi-index of increasing integers and $\sigma$
a permutation in $S_r$. The $\hat q$-inversion associated to the parameter $\hat q$ is defined as:
\begin{equation}
\varepsilon({\hat{q}},I,\sigma)=\prod_{\substack{s<t\\\si_s>\si_t}}
(-q_{i_{\sigma_{s}}i_{\sigma_t}}) .
\end{equation}
If $I=\{1,2,\ldots,n\}$, we simply denote it by $\varepsilon({\hat{q}}, \sigma)$.

Let $I=(i_1<i_2<\cdots<i_r)$ be a
multi-index of increasing positive integers and $J=(j_1,j_2,\cdots,j_r)$ be any  multi-index.
Denote by $M_{IJ}$  the matrix  whose row and column
indices belong to $I$ and $J$ respectively.
The  $\hat{q}$-minor column determinant is defined as (cf.\cite{JZ2})
\begin{equation}
\begin{split}
{\cdet}_{\hat{q}}(M_{IJ})
&=\sum_{\sigma\in S_r}  \varepsilon({\hat{q}},I,\sigma) M_{i_{\sigma(1)},j_1}\cdots M_{i_{\sigma(r)},j_r},
\end{split}
\end{equation}
%where $l(\sigma)=|\{(i, j)|i<j, \si_i>\si_j\}|$ is the classical inversion number of $\sigma$.
In particular, for $m=n$
the column $\hat{q}$-determinant of $M$ is the column $n$-minor
\begin{equation}
\begin{split}
{\cdet}_{\hat{q}}(M)
&=\sum_{\sigma\in S_n} \varepsilon({\hat{q}}, \sigma)
M_{{\sigma(1)},1}\cdots M_{{\sigma(n)},n}.
\end{split}
\end{equation}

Using the relation $\psi_{j}\psi_{i}=-q_{ji}\psi_{i}\psi_{j} \, (i<j)$ and $\psi_i^2=0$, one has that
\begin{equation}
\phi_1 \phi_2 \cdots \phi_n
={\cdet}_{\hat{q}}(M)
\psi_1 \psi_2 \cdots \psi _n.
\end{equation}

\begin{proposition}
For any multi-index $I=(i_1,i_2,\ldots,i_n)$,
 one has for a square $(\hat{q}, \hat{p})$-Manin matrix $M$
\begin{equation*}
\sum_{\sigma\in S_n}
\varepsilon({\hat{q}}, \sigma)
M_{\sigma(1),i_1}\cdots M_{\sigma(n),i_n}=
\varepsilon(\hat p, I)
{\cdet}_{\hat{q}}(M).
\end{equation*}
%In particular, ${\det}_{\hat{q}}(M)=. $
\end{proposition}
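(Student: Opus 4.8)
The plan is to lift the identity into the quantum exterior algebra $\Xi_{A_{\hat q}}(\mathfrak R)=\mathfrak R\otimes\Xi_{A_{\hat q}}(\mathbb C)$ and read it off by expanding the single element $\phi_{i_1}\cdots\phi_{i_n}$ in two ways, where $\phi_j=\sum_i\psi_iM_{ij}$ as in \eqref{e:y-phi}. First I would expand $\phi_{i_1}\cdots\phi_{i_n}$ straight from the definition. Since each $M_{ab}\in\mathfrak R$ commutes with all the $\psi$'s, every $M$ can be moved to the right; since $\psi_{a_1}\cdots\psi_{a_n}$ vanishes unless $(a_1,\dots,a_n)$ is a permutation $\sigma\in S_n$, and then $\psi_{\sigma(1)}\cdots\psi_{\sigma(n)}=\varepsilon(\hat q,\sigma)\,\psi_1\cdots\psi_n$ by the same normal-ordering computation that yields $\phi_1\cdots\phi_n=\cdet_{\hat q}(M)\,\psi_1\cdots\psi_n$, one gets
\[
\phi_{i_1}\cdots\phi_{i_n}=\Bigl(\sum_{\sigma\in S_n}\varepsilon(\hat q,\sigma)\,M_{\sigma(1),i_1}\cdots M_{\sigma(n),i_n}\Bigr)\psi_1\cdots\psi_n .
\]
Thus the left-hand side of the proposition is exactly the coefficient of $\psi_1\cdots\psi_n$ in $\phi_{i_1}\cdots\phi_{i_n}$.

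Next I would use the dual face of the Manin condition. By Proposition~\ref{p:comm}, since $M$ is an $(A_{\hat q},A_{\hat p})$-Manin matrix the row vector $\Phi=\Psi M$ satisfies $(\Phi\otimes\Phi)(1-A_{\hat p})=0$; spelled out in coordinates this states precisely that $\phi_1,\dots,\phi_n\in\Xi_{A_{\hat q}}(\mathfrak R)$ satisfy $\phi_i^2=0$ and $\phi_j\phi_i=-p_{ji}\phi_i\phi_j$ for $i<j$ (so $\phi_a\phi_b=-p_{ab}\phi_b\phi_a$ whenever $a\neq b$). If two entries of $I$ coincide, bringing the repeated $\phi$'s next to each other at the cost of a scalar and applying $\phi^2=0$ gives $\phi_{i_1}\cdots\phi_{i_n}=0$; since $\varepsilon(\hat p,I)=0$ as well, the identity reads $0=0$ in that case.

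When the $i_k$ are pairwise distinct they form a permutation of $(1,2,\dots,n)$, and I would reorder $\phi_{i_1}\cdots\phi_{i_n}$ into $\phi_1\phi_2\cdots\phi_n$ by successive adjacent transpositions. Each inversion $(s,t)$ of $I$ (with $s<t$ and $i_s>i_t$) contributes the scalar $-p_{i_si_t}$, so the total factor collected is $\prod_{s<t,\,i_s>i_t}(-p_{i_si_t})=\varepsilon(\hat p,I)$, whence
\[
\phi_{i_1}\cdots\phi_{i_n}=\varepsilon(\hat p,I)\,\phi_1\cdots\phi_n=\varepsilon(\hat p,I)\,\cdet_{\hat q}(M)\,\psi_1\cdots\psi_n .
\]
Comparing with the first-step expansion and cancelling the common factor $\psi_1\cdots\psi_n$ yields the assertion; the cancellation is legitimate because $\psi_1\cdots\psi_n$ spans the one-dimensional top-degree component of $\Xi_{A_{\hat q}}(\mathbb C)$, so the map $r\mapsto r\,\psi_1\cdots\psi_n$ is injective on $\mathfrak R$.

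I expect the only real friction to be bookkeeping: keeping the two parameter families $\hat q$ and $\hat p$ apart, in particular getting the direction of the $p$-parameters right in the $\phi$-relations and in the re-sorting (the collected factor is $-p_{i_si_t}$ with $i_s>i_t$, not its inverse), and being explicit that stripping off $\psi_1\cdots\psi_n$ at the end invokes the normal-ordering (PBW) property of the quantum exterior algebra. Beyond Proposition~\ref{p:comm} and the already-recorded identity $\phi_1\cdots\phi_n=\cdet_{\hat q}(M)\,\psi_1\cdots\psi_n$, no further input is needed.
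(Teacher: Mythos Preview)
Your proof is correct and follows essentially the same approach as the paper: expand $\phi_{i_1}\cdots\phi_{i_n}$ once directly in terms of the $\psi$'s to obtain the left-hand side, and once via the $\hat p$-anticommutation relations among the $\phi_j$ (coming from the Manin condition $(\Phi\otimes\Phi)S_{\hat p}=0$) to obtain $\varepsilon(\hat p,I)\,\cdet_{\hat q}(M)\,\psi_1\cdots\psi_n$, then compare. Your write-up is in fact slightly more detailed than the paper's in that you make the repeated-index case and the legitimacy of cancelling $\psi_1\cdots\psi_n$ explicit.
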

\begin{proof}
The relation $(\Phi\otimes\Phi)S_{\hat{p}}=0$
can be written as
\begin{equation}
 \phi_{i}^2=0, \quad  \phi_{j}\phi_{i}=-p_{ji}\phi_{i}\phi_{j}\  \text{for}  \quad 1\leq i\neq j\leq m.
\end{equation}
Therefore,
\begin{align}\label{det1}
\phi_{i_1}\phi_{i_2}\cdots\phi_{i_n}=
\varepsilon(\hat p, I)\phi_{1}\phi_{2}\cdots\phi_{n}.
\end{align}
For any  $I=(i_1,i_2\cdots i_n)$ we can compute that
\begin{align}\label{det2}
&\phi_{i_1}\phi_{i_2}\cdots\phi_{i_n}
=\sum_{\sigma\in S_n}
\varepsilon({\hat{q}}, \sigma)
M_{\sigma(1),i_1}\cdots M_{\sigma(n),i_n}{\psi}_{1}{\psi}_{2}\cdots{\psi}_{n}.
\end{align}
Comparing equation \eqref{det1} with equation \eqref{det2}, we obtain the proposition.
\end{proof}

Let $I=(i_1,\ldots,i_r)$ and $K=(k_1,\ldots,k_{n-r})$ be two multi-indices.
Then
\begin{equation}\label{lap1}
\phi_{i_1}\cdots\phi_{i_r}
\phi_{k_{1}}\cdots\phi_{k_{n-r}}
=
\varepsilon({\hat{p}},I \oplus K)
{\cdet}_{\hat{q}}(M) {\psi}_{1}{\psi}_{2}\cdots{\psi}_{n}.
\end{equation}

On the other hand,
\begin{equation}\label{lap2}
\begin{split}
&\phi_{i_1}\cdots\phi_{i_r}
\phi_{k_{1}}\cdots\phi_{k_{n-r}}\\
=&\sum_{J}
{\cdet}_{\hat{q}}(M_{JI})
\psi_{j_1} \cdots \psi_{j_r}
{\cdet}_{\hat{q}}(M_{J^c, K} )
\psi_{j_{r+1}} \cdots\psi_{j_{n}}\\
=&\sum_{J}
 \varepsilon ({\hat{q}},J \oplus J^c)
{\cdet}_{\hat{q}}(M_{JI})
{\cdet}_{\hat{q}}(M_{ J^c, K} )
\psi_{1} \cdots\psi_{n},
\end{split}
\end{equation}
where the sum is over all multi-indices $J$ of increasing integers.
Comparing equations \eqref{lap1} and \eqref{lap2}, we obtain the following Laplace expansion.
\begin{proposition}[Laplace expansion]
Let $I$ and $K$ be two  subsets of $[1,n]$ with cardinality $r$ and $n-r$ respectively, then
\begin{equation}
\varepsilon({\hat{p}},I \oplus K)
{\cdet}_{\hat{q}}(M)
=\sum_{J}
 \varepsilon ({\hat{q}},J \oplus J^c)
{\cdet}_{\hat{q}}(M_{JI})
{\cdet}_{\hat{q}}(M_{ J^c, K} )
\end{equation}
where the sum is taken over all increasing multi-index $J$.
\end{proposition}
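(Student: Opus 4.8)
The plan is to compute the product $\phi_{i_1}\cdots\phi_{i_r}\phi_{k_1}\cdots\phi_{k_{n-r}}$ of the auxiliary odd variables $\phi_j=\sum_i\psi_iM_{ij}$ inside $\Xi_{A_{\hat q}}(\mathfrak R)$ in two different ways, and then to compare the coefficients of the top monomial $\psi_1\psi_2\cdots\psi_n$. Since $I$ and $K$ partition $[1,n]$, the juxtaposition $I\oplus K$ is a permutation of $(1,2,\ldots,n)$, so both computations produce a scalar (in $\mathfrak R$) multiple of $\psi_1\cdots\psi_n$, and the comparison is precisely the two displays \eqref{lap1} and \eqref{lap2} recorded above.

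The one preliminary fact I would isolate is the ``partial'' version of $\phi_1\cdots\phi_n={\cdet}_{\hat q}(M)\psi_1\cdots\psi_n$, namely
\begin{equation*}
\phi_{l_1}\cdots\phi_{l_s}=\sum_{J}{\cdet}_{\hat q}(M_{J,L})\,\psi_{j_1}\cdots\psi_{j_s},\qquad L=(l_1,\ldots,l_s),
\end{equation*}
the sum being over increasing $s$-element multi-indices $J=(j_1<\cdots<j_s)$. This drops out of expanding each $\phi_{l_t}=\sum_k\psi_kM_{kl_t}$, using that the entries of $M$ commute with the $\psi$'s (because $\Xi_{A_{\hat q}}(\mathfrak R)=\mathfrak R\otimes\Xi_{A_{\hat q}}(\mathbb C)$), grouping the resulting sum over tuples $(k_1,\ldots,k_s)$ by the underlying set $J$ (tuples with a repeated index vanish since $\psi_i^2=0$), and reordering $\psi_{j_{\sigma(1)}}\cdots\psi_{j_{\sigma(s)}}=\varepsilon(\hat q,J,\sigma)\psi_{j_1}\cdots\psi_{j_s}$, which is exactly the weight in the definition of ${\cdet}_{\hat q}$.

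Then I would argue the two sides. For \eqref{lap1}: the relations $\phi_i^2=0$ and $\phi_j\phi_i=-p_{ji}\phi_i\phi_j$, which come from $(\Phi\otimes\Phi)S_{\hat p}=0$ and hold because $M$ is a $(\hat q,\hat p)$-Manin matrix, let one reorder $\phi_{i_1}\cdots\phi_{i_r}\phi_{k_1}\cdots\phi_{k_{n-r}}$ into increasing order at the cost of $\varepsilon(\hat p,I\oplus K)$, yielding $\varepsilon(\hat p,I\oplus K){\cdet}_{\hat q}(M)\psi_1\cdots\psi_n$. For \eqref{lap2}: apply the partial identity to $\phi_{i_1}\cdots\phi_{i_r}$ (over increasing $r$-subsets $J$) and to $\phi_{k_1}\cdots\phi_{k_{n-r}}$ (over increasing $(n-r)$-subsets $J'$) and multiply; since the column minors lie in $\mathfrak R$ and commute with the $\psi$'s, one obtains $\sum_{J,J'}{\cdet}_{\hat q}(M_{JI}){\cdet}_{\hat q}(M_{J'K})\,\psi_{j_1}\cdots\psi_{j_r}\psi_{j_1'}\cdots\psi_{j_{n-r}'}$, and the $\psi$-monomial vanishes unless $J'=J^c$, in which case it equals $\varepsilon(\hat q,J\oplus J^c)\psi_1\cdots\psi_n$. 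Matching the $\psi_1\cdots\psi_n$-coefficients of \eqref{lap1} and \eqref{lap2} is the assertion.

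The computation is essentially bookkeeping once these identities are in hand; the only place that actually uses the hypothesis on $M$ — and the step I would be most careful about — is the claim that the $\phi_j$ satisfy the $\hat p$-exterior relations, i.e. the equivalence in Proposition~\ref{p:comm} between $A_{\hat q}M_1M_2S_{\hat p}=0$ and $(\Phi\otimes\Phi)S_{\hat p}=0$ together with its entrywise reading. One should also take care not to conflate the two exterior algebras: the reordering of the $\phi$'s uses the parameters $\hat p$, the reordering of the $\psi$'s uses $\hat q$, and the coefficients coming from $\mathfrak R$ must be tracked as inert throughout.
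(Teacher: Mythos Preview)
Your proposal is correct and is exactly the paper's argument: the two computations you describe are the displays \eqref{lap1} and \eqref{lap2} that the paper records immediately before the proposition, and the paper then simply says ``comparing the equations \eqref{lap1} and \eqref{lap2}'' to conclude. You have merely added helpful detail---the partial expansion $\phi_{l_1}\cdots\phi_{l_s}=\sum_J{\cdet}_{\hat q}(M_{J,L})\,\psi_{j_1}\cdots\psi_{j_s}$ and the explicit remark about which parameter ($\hat p$ for the $\phi$'s, $\hat q$ for the $\psi$'s) governs each reordering---but the approach is identical.
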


Taking $I=K$ in the Laplace expansion, we obtain the
Pl\"{u}cker\ relation, which generalizes the one-parameter one or the $q$-Maya relation (cf. \cite{JZ1}).
\begin{corollary}[Pl\"{u}cker\ relation]
Let $I=(i_1<\ldots<i_r)$ and $K=(k_1<\ldots<k_{2r})$ be two multi-indices of increasing integers, then
\begin{equation}
\sum_{J\subset K}
 \varepsilon ({\hat{q}},J\oplus K\setminus J)
{\cdet}_{\hat{q}}(M_{JI})
{\cdet}_{\hat{q}}(M_{K\setminus J, I})
=0,
\end{equation}
where the sum is taken over all multi-indices $J\subset K$ of increasing positive
integers.
\end{corollary}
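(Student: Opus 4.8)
The plan is to mimic the derivation of the Laplace expansion, but applied to the \emph{square} of a single ``column form''. For the increasing multi-index $I=(i_1<\cdots<i_r)$ set $\Phi_I=\phi_{i_1}\phi_{i_2}\cdots\phi_{i_r}\in\Xi_{A_{\hat q}}(\mathfrak R)$, where $\phi_j=\sum_i\psi_iM_{ij}$ as in \eqref{e:y-phi}. Two facts about $\Phi_I$ do all the work. Since $M$ is a $(\hat q,\hat p)$-Manin matrix, Proposition~\ref{p:comm} gives $(\Phi\otimes\Phi)S_{\hat p}=0$, i.e. $\phi_i^2=0$ and $\phi_j\phi_i=-p_{ji}\phi_i\phi_j$ for $i\neq j$; hence any product of the $\phi_j$ with a repeated index vanishes, so in particular
\[
\Phi_I^2=\phi_{i_1}\cdots\phi_{i_r}\phi_{i_1}\cdots\phi_{i_r}=0
\]
(commute the second $\phi_{i_1}$ leftward past $\phi_{i_r},\dots,\phi_{i_2}$ to expose a factor $\phi_{i_1}^2$). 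And, exactly as in the Laplace computation \eqref{lap2}, moving the $\psi$'s to the front and the $M_{ij}$'s to the back and collecting the row indices into their underlying increasing set $J$ gives
\[
\Phi_I=\sum_{J}{\cdet}_{\hat q}(M_{JI})\,\psi_{j_1}\cdots\psi_{j_r},
\]
the sum over increasing $r$-subsets $J$ of the row index set.

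Next I would square the displayed expansion. Using that elements of $\mathfrak R$ commute with the $\psi$'s, and that $\psi_{j_1}\cdots\psi_{j_r}\psi_{j'_1}\cdots\psi_{j'_r}$ vanishes unless $J\cap J'=\emptyset$, in which case it equals $\varepsilon(\hat q,J\oplus J')\,\psi_{k_1}\cdots\psi_{k_{2r}}$ where $K:=J\cup J'$ is listed increasingly (the same reordering rule as in \eqref{det1}, now applied to the $\psi$'s), one obtains
\[
0=\Phi_I^2=\sum_{K}\Bigl(\sum_{J\subset K}\varepsilon(\hat q,J\oplus K\setminus J)\,{\cdet}_{\hat q}(M_{JI})\,{\cdet}_{\hat q}(M_{K\setminus J,\,I})\Bigr)\psi_{k_1}\cdots\psi_{k_{2r}},
\]
the outer sum over increasing $2r$-subsets $K$ of the row set, the inner one over increasing $r$-subsets $J\subset K$. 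Since the monomials $\psi_{k_1}\cdots\psi_{k_{2r}}$ belong to the standard monomial basis of $\Xi_{A_{\hat q}}(\mathbb C)$ and are therefore linearly independent over $\mathfrak R$ in $\Xi_{A_{\hat q}}(\mathfrak R)=\mathfrak R\otimes\Xi_{A_{\hat q}}(\mathbb C)$, the coefficient of each $\psi_{k_1}\cdots\psi_{k_{2r}}$ vanishes, which is exactly the claimed Pl\"ucker relation.

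The only substantive inputs are $\Phi_I^2=0$, where the Manin hypothesis enters through the quantum-exterior relations among the $\phi_j$ furnished by Proposition~\ref{p:comm}, and the flatness of $\Xi_{A_{\hat q}}(\mathbb C)$ that licenses comparing coefficients of $\psi$-monomials; the latter is already implicit in the proof of the Laplace expansion, so I would take it as given. The point needing care is the bookkeeping of $\hat q$-signs: one must verify that reordering $(j_1,\dots,j_r,j'_1,\dots,j'_r)$ into increasing order produces precisely $\varepsilon(\hat q,J\oplus K\setminus J)$ — which holds because, $J$ and $K\setminus J$ each being already increasing, every inversion is a ``mixed'' pair — so that the sign in the final sum matches the definition of $\varepsilon(\hat q,\cdot)$ verbatim. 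Equivalently, one may read the argument as formally substituting a repeated block $K=I$ into \eqref{lap1}--\eqref{lap2}: then $\varepsilon(\hat p,I\oplus I)=0$ kills the left-hand side while the right-hand side is the inner sum above, and tracking an arbitrary $2r$-subset in place of the full row set yields the stated generality.
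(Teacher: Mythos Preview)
Your proof is correct and follows essentially the same route as the paper. The paper's one-line derivation---set the second column block equal to $I$ in the Laplace expansion so that $\varepsilon(\hat p,I\oplus I)=0$ kills the left side---is precisely the ``equivalent'' reformulation you give at the end, and your direct computation of $\Phi_I^2=0$ is just the explicit unpacking of that idea via \eqref{lap1}--\eqref{lap2}, with the added bookkeeping needed to read off the identity for an arbitrary increasing $2r$-subset $K$ of rows rather than only the full row set.
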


The  special cases ($r=n-1$) of the Laplace expansion reads:
\begin{equation}\label{cramer}
\sum_{j}
 \varepsilon({\hat{q}},j^c\oplus j)
{\cdet}_{\hat{q}}(M_{j^c,i^c})
{\cdet}_{\hat{q}}(M_{ j, k})
= \varepsilon({\hat{p}},i^c\oplus k){\cdet}_{\hat{q}}(M).
\end{equation}

The following proposition follows from \eqref{cramer}.
\begin{proposition}[Inverse of Manin matrix]\label{inverse of manin}
Let $M$ be an
$n\times n$ $(\hat{q}, \hat{p})$-Manin matrix such that ${\cdet}_{\hat{q}}(M)$ is left invertible.
Then $M$ is left invertible and the left inverse $M^{-1}$ of $M$ is given by
\begin{equation}
(M^{-1})_{ij}=
 \varepsilon({\hat{p}},i^c\oplus i)^{-1}
 \varepsilon({\hat{q}},j^c\oplus j)
%{\hat{p}_{(i^c,i)}}^{-1}
%\hat{q}_{(j^c,j )}
{\cdet}_{\hat{q}}(M)^{-1}
{\cdet}_{\hat{q}}(M_{j^c i^c}),
 \end{equation}
where  ${\cdet}_{\hat{q}}(M)^{-1}$ is the left inverse of ${\cdet}_{\hat{q}}(M)$.
\end{proposition}

Moreover, if $M$ is also right invertible then the left inverse and the right inverse coincide.
We will call an element or matrix invertible if it is left and right invertible.

In the following we assume that any square submatrix of a Manin matrix is invertible and the
column minor determinant of the submatrix is left invertible.

Suppose that $X$ is invertible with $X^{-1}=Y$,  and $y_{ji}$ is invertible. The $(i,j)$-th quasideterminant $|X|_{ij}$
is the following element \cite{GR, GR2, GGRW}:
$$|X|_{ij}={(y_{ji})}^{-1}.$$

For any subset $I\subset \{1,2,\ldots,n\}$, let $X_{I}$ denote the submatrix whose row and column indices belong to $I$. For any multi-index $I=(i_1,i_2,\ldots,i_r)$, there exists an permutation $\sigma\in S_r$ such that
$i_{\sigma_1}\leq i_{\sigma_2}\leq \cdots\leq i_{\sigma_r}$.
We denote the ordered multi-index
$(i_{\sigma_1}\leq i_{\sigma_2}\leq \cdots\leq i_{\sigma_r})$
by $I^{or}$.

The quantum determinant of one-parameter general linear semigroups $\mathrm{M}_q(n)$ can be expressed as a product of minor quasideterminants \cite{GR2, KL}.
The following theorem is a generalization of this result for multiparametric Manin matrices.
\begin{theorem}
Let $M$ be an $n\times n$ $(\hat{q},\hat{p})$-Manin matrix, and let
$I=(i_1,i_2,\ldots,i_n)$ and  $J=(j_1,j_2,\ldots,j_n)$
be two permutations of $\{1,2,\ldots,n\}$.
Let $I_k=\{i_1,\ldots,i_k\}^{or}$ and $J_k=\{j_1,\ldots,j_k\}^{or}$ be the ordered sub-multi-indices, then
the determinant can be expressed as a product of quasideterminants
\begin{equation}
\begin{split}
{\cdet}_{\hat{q}}(M)
=
\frac{ \varepsilon({\hat{q}},J)}{ \varepsilon({\hat{p}},I)}
|M_{J_1I_1}|_{j_1i_1}|M_{J_2I_2}|_{j_2i_2}\cdots|M_{J_{n-1}I_{n-1}}|_{j_{n}i_{n}}
\end{split}
 \end{equation}
\end{theorem}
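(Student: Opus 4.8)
The plan is to induct on $n$, peeling off one quasideterminant at a time and relating ${\cdet}_{\hat q}(M)$ to the column determinant of a smaller Manin matrix obtained by a Gauss-type elimination step. The key input is that for a $(\hat q,\hat p)$-Manin matrix, the Laplace expansion along the last column (the case $r=n-1$ displayed just before Proposition~\ref{inverse of manin}) together with the formula for $M^{-1}$ in Proposition~\ref{inverse of manin} expresses a single matrix entry of $M^{-1}$ as a ratio of a cofactor to the full $\hat q$-determinant; inverting this relation gives ${\cdet}_{\hat q}(M) = \big(\text{scalar}\big)\cdot |M|_{j_n i_n}^{-1}\cdot {\cdet}_{\hat q}(M_{J_{n-1}I_{n-1}})$, where $|M|_{j_ni_n}=(y_{i_nj_n})^{-1}$ with $Y=M^{-1}$, and the scalar is the ratio of $\varepsilon$-symbols produced by reordering the deleted index to the end. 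Concretely, one first reduces to the case $I=J=(1,2,\ldots,n)$ by the Proposition establishing $\sum_{\sigma}\varepsilon(\hat q,\sigma)M_{\sigma(1),i_1}\cdots = \varepsilon(\hat p,I){\cdet}_{\hat q}(M)$ and its column analogue, which accounts for the global factor $\varepsilon(\hat q,J)/\varepsilon(\hat p,I)$; the permutations $I,J$ only contribute this overall scalar, so it suffices to track the standard ordered case and then twist at the end.

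First I would set up the base case $n=1$, where ${\cdet}_{\hat q}(M)=M_{11}=|M_{\{1\}\{1\}}|_{11}$ and the $\varepsilon$-factors are trivial. For the inductive step, fix the standard indices and write the last column Laplace expansion
\[
{\cdet}_{\hat q}(M)=\sum_{j}\varepsilon(\hat q, j^c\oplus j)\,{\cdet}_{\hat q}(M_{j^c,\,\{1,\ldots,n-1\}})\,M_{jn},
\]
then recognize, via Proposition~\ref{inverse of manin}, that $(M^{-1})_{nj}$ is proportional to $\varepsilon(\hat q,j^c\oplus j){\cdet}_{\hat q}(M)^{-1}{\cdet}_{\hat q}(M_{j^c,\{1,\ldots,n-1\}})$, so that $(y_{nn})^{-1}$ — i.e. $|M|_{nn}$ — collects exactly the $j=n$ term and the complementary cofactor ${\cdet}_{\hat q}(M_{n^c,n^c})={\cdet}_{\hat q}(M_{\{1,\ldots,n-1\}\{1,\ldots,n-1\}})$. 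This yields
\[
{\cdet}_{\hat q}(M)=\big(\text{$\varepsilon$-scalar}\big)\,|M|_{nn}\,{\cdet}_{\hat q}(M_{\{1,\ldots,n-1\}\{1,\ldots,n-1\}}),
\]
where the quasideterminant $|M|_{nn}$ here is the one of the \emph{full} matrix, not of a submatrix. To convert this into the telescoping product over $|M_{J_kI_k}|_{j_ki_k}$, I would invoke the hereditary property of quasideterminants under principal submatrices — namely that $|M|_{nn}$ equals $|M_{KK}|_{nn}$ whenever $K$ is any index set containing $n$ such that the relevant smaller minors are invertible, combined with the observation that the Gauss elimination producing the $(n-1)\times(n-1)$ block ${\cdet}_{\hat q}(M_{\{1,\ldots,n-1\}\{1,\ldots,n-1\}})$ is compatible with restricting to nested $I_k,J_k$. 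Then apply the induction hypothesis to the $(n-1)\times(n-1)$ Manin matrix $M_{\{1,\ldots,n-1\}\{1,\ldots,n-1\}}$, which is again a $(\hat q,\hat p)$-Manin matrix (a principal submatrix of one).

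The main obstacle I anticipate is bookkeeping of the $\varepsilon$-scalars: one must check that the product of the local factors $\varepsilon(\hat q, n^c\oplus n)$ and its $\hat p$-counterpart $\varepsilon(\hat p, n^c\oplus n)$ arising at each elimination step, once multiplied together down the recursion and twisted by the reordering of the permutations $I,J$, collapses to precisely $\varepsilon(\hat q,J)/\varepsilon(\hat p,I)$. This is a combinatorial identity about how the sign-like $q$- and $p$-weights factor through juxtaposition $I\oplus K$, and it should follow from the multiplicativity $\varepsilon(\hat q, I)\,\varepsilon(\hat q, J)\,(\text{crossing factor})=\varepsilon(\hat q, I\oplus J)$ that is implicit in the definition of $\varepsilon(\hat q,I)$ and used throughout Section~\ref{s:manin}; but verifying it carefully in the nested setting is the delicate part. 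A secondary technical point is justifying the hereditary property of quasideterminants in the noncommutative algebra $\mathfrak R$ — this is standard (it is in \cite{GGRW}) but one should confirm the invertibility hypotheses propagate, which they do because each ${\cdet}_{\hat q}(M_{J_kI_k})$ is assumed invertible implicitly by the appearance of the quasideterminants $|M_{J_kI_k}|_{j_ki_k}$ in the statement.
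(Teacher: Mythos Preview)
Your induction scheme and the use of Proposition~\ref{inverse of manin} to peel off one quasideterminant are exactly what the paper does. However, your concrete implementation takes a detour that introduces a genuine gap.

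The step ``first reduce to $I=J=(1,2,\ldots,n)$ and twist at the end'' does not work as stated. Different permutations $I,J$ produce \emph{different} products of quasideterminants, not the same product up to a scalar; for instance with $n=2$ the choices $(1,2)$ and $(2,1)$ give $M_{11}\,|M|_{22}$ versus a scalar times $M_{22}\,|M|_{11}$. So the reduction does not account for the global $\varepsilon(\hat q,J)/\varepsilon(\hat p,I)$ factor alone. Worse, the ``hereditary property'' you invoke, $|M|_{nn}=|M_{KK}|_{nn}$ for any $K\ni n$, is simply false: the quasideterminant of a submatrix is not the quasideterminant of the full matrix at the same position (the former ignores the deleted rows and columns entirely). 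The Gelfand--Retakh heredity statement concerns Schur complements, not principal submatrices, and does not give what you need here.

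The paper's fix is to skip the reduction entirely and run the induction with general $I,J$ from the start. Apply Proposition~\ref{inverse of manin} directly to the entry $(M^{-1})_{i_nj_n}$: since $|M|_{j_ni_n}=((M^{-1})_{i_nj_n})^{-1}$, one obtains
\[
{\cdet}_{\hat q}(M)=\frac{\varepsilon(\hat q,\,j_n^{c}\oplus j_n)}{\varepsilon(\hat p,\,i_n^{c}\oplus i_n)}\,{\cdet}_{\hat q}(M_{j_n^{c}\,i_n^{c}})\,|M|_{j_ni_n}
\]
(note: $|M|_{j_ni_n}$, not its inverse as in your first paragraph). Now $j_n^{c}=J_{n-1}$ and $i_n^{c}=I_{n-1}$, so the induction hypothesis applies to $M_{J_{n-1}I_{n-1}}$ with the permutations $(j_1,\ldots,j_{n-1})$ and $(i_1,\ldots,i_{n-1})$ of those index sets. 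The $\varepsilon$-bookkeeping you worried about then reduces to the single identity
\[
\varepsilon(\hat q,\,j_n^{c}\oplus j_n)\cdot\varepsilon\bigl(\hat q,(j_1,\ldots,j_{n-1})\bigr)=\varepsilon(\hat q,J),
\]
which is immediate: the inversions in $J$ split into those among $j_1,\ldots,j_{n-1}$ and those pairing some $j_s>j_n$ with $j_n$, and the latter are exactly the inversions in $j_n^{c}\oplus j_n$. No hereditary property and no Gauss elimination are needed.
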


\begin{proof} We use induction on $n$. First of all
it follows from Proposition \ref{inverse of manin} that
\begin{equation}
\begin{split}
{\cdet}_{\hat{q}}(M)&=
\frac{ \varepsilon({\hat{q}},j^c_n\oplus j_n)}{ \varepsilon({\hat{p}},i^c_n\oplus i_n)}
{\cdet}_{\hat{q}}(M_{j^c_ni^c_n})
(M^{-1})_{i_nj_n}^{-1}\\
&=
\frac{ \varepsilon({\hat{q}},j^c_n\oplus j_n)}{ \varepsilon({\hat{p}},i^c_n\oplus i_n)}
{\cdet}_{\hat{q}}(M_{j^c_ni^c_n})
|M|_{j_ni_n}
\end{split}
 \end{equation}

By induction hypothesis we have that
\begin{equation}
\begin{split}
{\cdet}_{\hat{q}}(M_{j^c_ni^c_n})
=
\frac{ \varepsilon({\hat{q}},J_{n-1})}{ \varepsilon({\hat{p}},I_{n-1})}
|M_{J_1I_1}|_{j_1i_1}\cdots|M_{J_{n-1}I_{n-1}}|_{j_{n-1}i_{n-1}}.
\end{split}
 \end{equation}

Therefore,
\begin{equation}
\begin{split}
{\cdet}_{\hat{q}}(M)
=
\frac{ \varepsilon({\hat{q}},J)}{ \varepsilon({\hat{p}},I)}
|M_{J_1I_1}|_{j_1i_1}|M_{J_2I_2}|_{j_2i_2}\cdots|M_{J_{n-1}I_{n-1}}|_{j_{n}i_{n}}.
\end{split}
 \end{equation}
\end{proof}

The following is an analogue of Cauchy-Binet's formula for multiparametric Manin matrices.
\begin{proposition}[Cauchy-Binet's formula]\label{det cauchy-Binet}
Let $M$ be an $n\times m$ $(\hat{q},\hat{p})$-Manin matrix  and $N$ an $m\times s$ matrix such that the $N_{ij}$  commute with $M_{kl}$ and $\psi_t$ for all possible indices  $i,j,k,l,t$.
Let $I=(i_1<\cdots<i_r)$ and $K=(k_1,\cdots,k_r)$ be two multi-indices.
Then
 \begin{equation}
{\cdet}_{\hat{q}}((MN)_{IK})=\sum_{J} {\cdet}_{\hat{q}}((M)_{IJ}){\cdet}_{\hat{p}}((N)_{JK}),
  \end{equation}
for $r\leq m$, where the sum is taken over all multi-indices of increasing integers $J\subset(1,\ldots,m)$.
And
 \begin{equation}
{\cdet}_{\hat{q}}((MN)_{IK}=0,
\end{equation}
for $r>m$.
In particular, if  $m=n=s$, then
 \begin{equation}
{\cdet}_{\hat{q}} (MN) ={\cdet}_{\hat{q}}(M){\cdet}_{\hat{p}}(N).
  \end{equation}
\end{proposition}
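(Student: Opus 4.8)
The plan is to rerun, with the extra matrix $N$ carried along, the same auxiliary--variable argument that produced the Laplace expansion above. Work inside the quantum exterior algebra $\Xi_{A_{\hat q}}(\mathfrak R)$ with generators $\psi_1,\dots,\psi_n$, and set
\[
\phi_j=\sum_{i=1}^{n}\psi_i M_{ij}\ \ (1\le j\le m),\qquad \theta_k=\sum_{j=1}^{m}\phi_j N_{jk}=\sum_{i=1}^{n}\psi_i (MN)_{ik}\ \ (1\le k\le s),
\]
the second equality using only that the $N_{jk}$ commute with the $\psi_t$ and the $M_{kl}$ (hence with the $\phi_j$). Since $M$ is an $(A_{\hat q},A_{\hat p})$-Manin matrix, Proposition \ref{p:comm} gives $(\Phi\otimes\Phi)S_{\hat p}=0$, i.e. $\phi_j^2=0$ and $\phi_l\phi_j=-p_{lj}\phi_j\phi_l$ for $j\ne l$. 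This is the only place the Manin hypothesis is used, and it is exactly what will force the $N$-minors below to assemble into $\hat p$-determinants rather than something else.

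Next I would record three expansions, each obtained by distributing the defining sums and regrouping monomials according to their underlying increasing index set, exactly as in the derivations of \eqref{lap1} and \eqref{lap2}. For an arbitrary multi-index $J=(j_1,\dots,j_r)$ and a multi-index $K=(k_1,\dots,k_r)$, summing over increasing $I=(i_1<\cdots<i_r)$ one has
\begin{equation*}
\phi_{j_1}\cdots\phi_{j_r}=\sum_{I}\cdet_{\hat q}(M_{IJ})\,\psi_{i_1}\cdots\psi_{i_r},\qquad
\theta_{k_1}\cdots\theta_{k_r}=\sum_{I}\cdet_{\hat q}((MN)_{IK})\,\psi_{i_1}\cdots\psi_{i_r};
\end{equation*}
neither of these needs any relation among the $\phi$'s, only $\psi_i^2=0$, $\psi_l\psi_i=-q_{li}\psi_i\psi_l$, and centrality of $\mathfrak R$. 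The third expansion instead uses the $\hat p$-relations on the $\phi$'s: keeping the $N_{jk}$ in their column order (so no commutation among the $N$'s is needed) while reordering only the $\phi$'s, and killing repeated-index terms by $\phi_j^2=0$, one gets
\begin{equation*}
\theta_{k_1}\cdots\theta_{k_r}=\sum_{J}\phi_{j_1}\cdots\phi_{j_r}\,\cdet_{\hat p}(N_{JK}),
\end{equation*}
the sum over increasing $J=(j_1<\cdots<j_r)\subset(1,\dots,m)$, where the reordering factor $\varepsilon(\hat p,J,\sigma)$ is precisely what turns $\sum_{\sigma\in S_r}\varepsilon(\hat p,J,\sigma)N_{j_{\sigma(1)}k_1}\cdots N_{j_{\sigma(r)}k_r}$ into $\cdet_{\hat p}(N_{JK})$.

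Substituting the first identity into the third and comparing with the second yields
\begin{equation*}
\sum_{I}\Bigl(\cdet_{\hat q}((MN)_{IK})-\sum_{J}\cdet_{\hat q}(M_{IJ})\,\cdet_{\hat p}(N_{JK})\Bigr)\psi_{i_1}\cdots\psi_{i_r}=0,
\end{equation*}
and since the monomials $\psi_{i_1}\cdots\psi_{i_r}$ over increasing $I$ form a basis of the degree-$r$ component of $\Xi_{A_{\hat q}}(\mathbb C)$ (the PBW-type basis already used implicitly above), each coefficient vanishes, which is the asserted formula. The case $r>m$ is immediate from the third expansion, where the index set is empty, so $\theta_{k_1}\cdots\theta_{k_r}=0$ and then the second expansion forces $\cdet_{\hat q}((MN)_{IK})=0$ for every admissible $I$; the square case $m=n=s$ is just the coefficient of $\psi_1\cdots\psi_n$. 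The only genuine obstacle is bookkeeping: one must keep straight that $\hat q$ governs reorderings of the $\psi$'s and of the rows of $MN$ while $\hat p$ governs reorderings of the $\phi$'s and hence the columns of $N$, and observe that it is the Manin identity for $M$ (via Proposition \ref{p:comm}) that makes the $\hat p$-side come out correctly; everything else is the routine distribute-and-regroup computation already carried out for the Laplace expansion.
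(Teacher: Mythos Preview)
Your argument is correct and is essentially the paper's own proof: you introduce the same auxiliary elements (the paper calls your $\theta_k$ by $\xi_k$), expand the product $\theta_{k_1}\cdots\theta_{k_r}$ once through the $\psi$'s and once through the $\phi$'s using the $\hat p$-relations on the $\phi$'s coming from the Manin condition, and compare coefficients of $\psi_{i_1}\cdots\psi_{i_r}$. The handling of the case $r>m$ and of the square case is also the same.
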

\begin{proof}
Let $\xi_{i}=\sum_{j=1}^n\psi_j (MN)_{ji}$. It follows from \eqref{e:y-phi} that
\begin{equation}
\xi_{i}=\sum_{j=1}^n\psi_j(MN)_{ji}
=\sum_{j=1}^n\sum_{k=1}^m\psi_jM_{jk}N_{ki}
=\sum_{k=1}^m\phi_kN_{ki}.
\end{equation}
Then we have that
\begin{equation}\label{CB1}
\begin{split}
\xi_{k_1}\xi_{k_2}\cdots\xi_{k_r}
&=
\sum_{i_1,\ldots i_r=1}^n \psi_{i_1}\cdots\psi_{i_r}(MN)_{i_1k_1}\cdots (MN)_{i_rk_r},
\end{split}
\end{equation}
which is zero if $r>m$. For $r\leq m$, one has
\begin{equation}
\begin{split}\xi_{k_1}\xi_{k_2}\cdots\xi_{k_r}
&=\sum_{1\leq i_1<\ldots <i_r\leq n} \psi_{i_1}\cdots\psi_{i_r}{\cdet}_{\hat{q}}((MN)_{IK}),
\end{split}
\end{equation}
where $I=(i_1<\cdots<i_r)$ is a multi-index of increasing integers.

On the other hand,
\begin{equation}\label{CB2}
\begin{split}
\xi_{k_1}\xi_{k_2}\cdots\xi_{k_r}
&=\sum_{j_1,\ldots,j_r=1}^m\phi_{j_1}\cdots \phi_{j_r}  N_{j_1k_1}\cdots N_{j_r k_r}\\
&=\sum_{1\leq j_1<\ldots <j_r\leq m}\phi_{j_1}\cdots \phi_{j_r} {\cdet}_{\hat{p}}(N_{JK})\\
&=\sum_{I,J} \psi_{i_1}\cdots\psi_{i_r} {\cdet}_{\hat{q}}((M)_{IJ}) {\cdet}_{\hat{p}}(N_{JK})\\
\end{split}
\end{equation}
where the sum is taken over all multi-indices of increasing integers
$I=(i_1<\ldots<i_r)$ and $J=(j_1<\ldots<j_r)$.
Comparing the coefficients of $\psi_{i_1}\cdots\psi_{i_r}$, one has the proposition.
\end{proof}

For any multi-index $I=(i_1,i_2,\ldots,i_r)$, we denote the reverse
of $I$
by $$I^{\tau}=(i_r,i_{r-1},\ldots,i_1).$$
In particular, we denote the reverse of $(1,2,\ldots,n)$ by $\tau$. Recall that $I^{or}$ is the ordered
multi-index of $I$.

\begin{proposition}\label{right invertible}
Let $M$ be a right invertible $n\times n$ $(\hat{q},\hat{p})$-Manin matrix,
$I$ and $J$ be subsets of $[1,n]$ of cardinality $m\leq n$.
Then
\begin{equation}
\sum_{K}
 \varepsilon ({\hat{p}},K^{\tau})
 {\cdet}_{\hat{q}}(M _{IK})
{\cdet}_{\hat{p'}}(M^{-1}_{KJ})
=\varepsilon ({\hat{q}},J^{\tau})\delta_{IJ^{or}}
\end{equation}
where
 the sum is taken over all increasing multi-index $K\subset (1,\ldots,n)$,
 $p'_{ij}=p_{ij}^{-1}$.
In particular, for $m=n$  one has that ${\cdet}_{\hat{q}}(M )$ %{\color{red}${\cdet}_{\hat{q}}(M )$
is right invertible and
\begin{equation}
 {\cdet}_{\hat{q}}(M )
{\cdet}_{\hat{p'}}(M^{-1})
=
 \varepsilon ({\hat{p}},\tau)^{-1}
 \varepsilon ({\hat{q}},\tau)
\end{equation}
\end{proposition}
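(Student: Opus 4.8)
The plan is to read the identity off the quantum exterior algebra $\Xi_{A_{\hat q}}(\mathfrak R)$, in the spirit of the Laplace expansion and the Cauchy--Binet formula proved above. Keep the generators $\psi_1,\dots,\psi_n$ (which commute with $\mathfrak R$) and set $\Phi=\Psi M$, so that the $\phi_j$ obey the $\hat p$-exterior relations. As in the proof of the Cauchy--Binet formula, for any multi-index $J$ and increasing $L$ of the same length $m$,
\begin{equation*}
\phi_{j_1}\cdots\phi_{j_m}=\sum_{L}\psi_{l_1}\cdots\psi_{l_m}\,{\cdet}_{\hat q}(M_{LJ}),
\end{equation*}
and since $M$ is invertible this is an invertible change of the $\mathfrak R$-basis of the degree-$m$ component. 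Hence there are unique $\beta_{KI}\in\mathfrak R$ with $\psi_{i_1}\cdots\psi_{i_m}=\sum_K\phi_{k_1}\cdots\phi_{k_m}\,\beta_{KI}$, and comparing coefficients of $\psi_{l_1}\cdots\psi_{l_m}$ shows that the asserted identity is exactly the statement $\beta_{KI}=\varepsilon(\hat p,K^{\tau})\,{\cdet}_{\hat p'}(M^{-1}_{KI})\,\varepsilon(\hat q,I^{\tau})^{-1}$. So it suffices to substitute $\psi_i=\sum_k\phi_k(M^{-1})_{ki}$ into $\psi_{i_1}\cdots\psi_{i_m}$ and show that, after reordering the $\phi$'s into increasing order, the coefficient of $\phi_{k_1}\cdots\phi_{k_m}$ equals that quantity.

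I would first settle $m=n$. There $\phi_1\cdots\phi_n={\cdet}_{\hat q}(M)\,\psi_1\cdots\psi_n$ is invertible, so $\psi_1\cdots\psi_n={\cdet}_{\hat q}(M)^{-1}\phi_1\cdots\phi_n$. On the other hand, insert Cramer's rule (Proposition~\ref{inverse of manin}), $(M^{-1})_{ij}=\varepsilon(\hat p,i^{c}\!\oplus i)^{-1}\varepsilon(\hat q,j^{c}\!\oplus j)\,{\cdet}_{\hat q}(M)^{-1}{\cdet}_{\hat q}(M_{j^{c}i^{c}})$, into ${\cdet}_{\hat p'}(M^{-1})=\sum_{\sigma}\varepsilon(\hat p',\sigma)(M^{-1})_{\sigma(1),1}\cdots(M^{-1})_{\sigma(n),n}$: the scalar prefactors assemble into $\varepsilon(\hat p,\tau)^{-1}\varepsilon(\hat q,\tau)$, while the remaining sum of products of complementary minors collapses, by the Laplace expansion together with $M M^{-1}=\mathrm{Id}$, to a single ${\cdet}_{\hat q}(M)^{-1}$. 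Comparing with $\psi_1\cdots\psi_n={\cdet}_{\hat q}(M)^{-1}\phi_1\cdots\phi_n$ then gives ${\cdet}_{\hat q}(M){\cdet}_{\hat p'}(M^{-1})=\varepsilon(\hat p,\tau)^{-1}\varepsilon(\hat q,\tau)$. For general $m<n$ one repeats this with the full index set replaced by the frozen multi-indices $I$ and $J$: the scalar factors in Cramer's rule for the submatrix now produce $\varepsilon(\hat p,K^{\tau})$ and $\varepsilon(\hat q,J^{\tau})$, the column and row reversals of $M^{-1}_{KJ}$ turn the parameters $\hat p$ into $\hat p'$, and the Laplace expansion again collapses the minor sum, leaving $\varepsilon(\hat q,J^{\tau})\,\delta_{IJ^{or}}$.

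The main obstacle is that the entries of $M^{-1}$ do not commute with the $\phi_k$ (equivalently, not with the entries of $M$), so one cannot simply slide the $(M^{-1})$-factors past the $\phi$'s and recognise a ${\cdet}$, and for the same reason the Cauchy--Binet formula cannot be applied directly to $M\cdot M^{-1}=\mathrm{Id}$. What makes the argument go through is that only the net coefficient of each ordered $\phi$-monomial is needed; tracking the non-commuting factors ${\cdet}_{\hat q}(M)^{-1}$ — which for a Manin matrix need not be central — through the Laplace collapse, and combining the reordering of the $\phi$'s with the $\varepsilon$-normalisations built into Cramer's rule, is exactly what produces the parameter inversion $\hat p\to\hat p'$ and the order-reversing factors $\varepsilon(\hat p,K^{\tau})$, $\varepsilon(\hat q,J^{\tau})$. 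A routine additional point is that one works over a localisation of $\mathfrak R$ in which ${\cdet}_{\hat q}(M)^{-1}$ exists, so that $M^{-1}$ is defined in the first place.
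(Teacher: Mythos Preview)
Your starting point---compute a $\psi$-product by substituting $\psi_i=\sum_k\phi_k(M^{-1})_{ki}$ and read off coefficients in the $\phi$-basis---is exactly the paper's approach. But you have misidentified the obstacle and, as a result, reached for a much more elaborate workaround than is needed.

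You worry that the entries of $M^{-1}$ do not commute with the $\phi_k$. That is true, but irrelevant: the entries of $M^{-1}$ \emph{do} commute with the $\psi_k$, since the $\psi_k$ are the original generators of $\Xi_{A_{\hat q}}(\mathfrak R)=\mathfrak R\otimes\Xi_{A_{\hat q}}(\mathbb C)$ and commute with all of $\mathfrak R$. The paper exploits this by expanding the \emph{reversed} product $\psi_{j_m}\psi_{j_{m-1}}\cdots\psi_{j_1}$. Substitute for the leftmost factor $\psi_{j_m}=\sum_{k_m}\phi_{k_m}(M^{-1})_{k_mj_m}$; the scalar $(M^{-1})_{k_mj_m}$ then slides freely past the remaining $\psi$'s to the far right. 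Iterating gives
\[
\psi_{j_m}\cdots\psi_{j_1}=\sum_{k_1,\ldots,k_m}\phi_{k_m}\cdots\phi_{k_1}\,(M^{-1})_{k_1j_1}\cdots(M^{-1})_{k_mj_m},
\]
with all $\phi$'s cleanly to the left of all $M^{-1}$-entries. Now the $\hat p$-exterior relations on the $\phi$'s reorder $\phi_{k_m}\cdots\phi_{k_1}$ to $\varepsilon(\hat p,K^\tau)\,\phi_{k_1}\cdots\phi_{k_m}$ for each increasing $K$, and the accompanying sum over permutations of $K$ is precisely ${\cdet}_{\hat p'}(M^{-1}_{KJ})$---the reversal of order is what flips $\hat p$ to $\hat p'$. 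Expanding each $\phi_{k_1}\cdots\phi_{k_m}$ back in the $\psi$-basis and comparing with $\psi_{j_m}\cdots\psi_{j_1}=\varepsilon(\hat q,J^\tau)\,\psi_{j_1^{or}}\cdots\psi_{j_m^{or}}$ yields the identity in one stroke.

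Your proposed detour through Cramer's rule and the Laplace expansion is not obviously wrong, but it is never actually carried out: phrases like ``the remaining sum of products of complementary minors collapses'' and ``tracking the non-commuting factors ${\cdet}_{\hat q}(M)^{-1}$\ldots is exactly what produces the parameter inversion'' are precisely where all the work would lie, and you have not shown how to do it. Once you notice the $\psi$--$M^{-1}$ commutation and reverse the order of the product, the entire computation is five lines and no such tracking is required.
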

\begin{proof}
We write the element ${\psi}_i$ as $\sum_{j=1}^n \phi_j  M^{-1}_{ji}$ (see \eqref{e:y-phi}).
The elements $M^{-1}_{ij}$ commute with
${\psi}_k$ for all possible $i,j,k$. Then
\begin{equation}
\begin{split}
{\psi}_{j_m} \cdots {\psi}_{j_1}
&=
\sum_{k_m=1}^{n} \phi_{k_m}  M^{-1}_{k_m j_m}{\psi}_{j_{m-1}} \cdots {\psi}_{j_1}\\
&=
\sum_{k_m=1}^n \phi_{k_m}  {\psi}_{j_{m-1}} \cdots {\psi}_{j_1} M^{-1}_{k_m j_m}\\
&=
\sum_{k_1,\ldots,k_m=1}^n \phi_{k_m} \phi_{k_{m-1}}\cdots \phi_{k_{1}}
M^{-1}_{k_1 j_1}  M^{-1}_{k_2 j_2} \cdots
M^{-1}_{k_m j_m}\\
\end{split}
\end{equation}
Using the relations $\phi_{j}\phi_{i}=-p_{ji}\phi_{i}\phi_{j}$,
we have that
\begin{equation}
\begin{split}
{\psi}_{j_m} &\cdots {\psi}_{j_1}
=
\sum_{1\leq k_1<\ldots<k_m\leq n}
 \varepsilon ({\hat{p}},K^{\tau})
\phi_{k_1} \cdots \phi_{k_{m}}
{\cdet}_{\hat{p'}}(M^{-1}_{KJ})\\
&=
\sum_{I,K }
 \varepsilon ({\hat{p}},K^{\tau})
 {\cdet}_{\hat{q}}(M _{IK})
{\cdet}_{\hat{p'}}(M^{-1}_{KJ}){\psi}_{i_1} \cdots {\psi}_{i_m}
\end{split}
\end{equation}
where the sum is taken over all possible multi-indices $I=(i_1<\ldots<i_m)$, $K=(k_1<\ldots<k_m)$.

Comparing the coefficients of ${\psi}_{i_m} \cdots {\psi}_{i_1}$,
we obtain the proposition.
\end{proof}

\section{Further generalization of classical identities}\label{s:gen-identities}
The map
$s_i=(i, i+1)\mapsto {P_{\hat{q}}}^{s_i}$
defines an $S_k$-module structure on ${\mathbb{C}^{n}}^{\otimes k}$, where $1\leq i\leq k-1$.
If $\sigma=\sigma_{i_1}\cdots\sigma_{i_l}$, we set ${P_{\hat{q}}}^{\sigma}={P_{\hat{q}}}^{\sigma_{i_1}}\cdots {P_{\hat{q}}}^{\sigma_{i_l}}$, which is well-defined and independent from the choice of reduced expression.
By $S_{\hat{q}}^{(k)}$ and $A_{\hat{q}}^{(k)}$ we denote the respective images of the normalized symmetrizer and antisymmetrizer:
\begin{equation}
S_{\hat{q}}^{(k)}=\frac{1}{k!}\sum_{\sigma\in S_k} {P_{\hat{q}}}^{\sigma}, \qquad
A_{\hat{q}}^{(k)}=\frac{1}{k!}\sum_{\sigma\in S_k}\text{sgn}\ \sigma {P_{\hat{q}}}^{\sigma}.
\end{equation}
In particular,
\begin{equation}
S_{\hat{q}}^{(2)}=S_{\hat{q}} , \qquad
A_{\hat{q}}^{(2)}=A_{\hat{q}}.
\end{equation}

\begin{proposition}\label{p:comm2}
One has the following identities in $\mathfrak{R} \otimes {\mathbb{C}^{n}}^{\otimes k}$
\begin{align}
A_{\hat{q}}^{(k)}M_1\cdots M_k=A_{\hat{q}}^{(k)} M_1\cdots M_k A_{\hat{p}}^{(k)},\\
M_1\cdots M_k S_{\hat{p}}^{(k)}=S_{\hat{q}}^{(k)}M_1\cdots M_k S_{\hat{p}}^{(k)}.
\end{align}
\end{proposition}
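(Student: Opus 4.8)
The plan is to bootstrap both identities from the $k=2$ case, which is nothing but the defining relation \eqref{qp manin}: rewriting it by means of $A_{\hat{q}}+S_{\hat{q}}=1$ and $A_{\hat{p}}+S_{\hat{p}}=1$ yields the two equivalent forms $A_{\hat{q}}M_1M_2=A_{\hat{q}}M_1M_2A_{\hat{p}}$ and $M_1M_2S_{\hat{p}}=S_{\hat{q}}M_1M_2S_{\hat{p}}$, which are precisely the two assertions of the proposition when $k=2$. I shall prove the two statements in parallel.

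\emph{Pairwise input.} For $1\le i\le k-1$ let $(A_{\hat{q}})_{i,i+1}$, $(S_{\hat{p}})_{i,i+1}$, $P_{\hat{q}}^{s_i}$, \dots\ denote the operators acting in the tensor slots $i,i+1$ and trivially elsewhere. Since the slots outside $\{i,i+1\}$ carry only scalar operators, $(A_{\hat{q}})_{i,i+1}$ and $(S_{\hat{p}})_{i,i+1}$ commute with every $M_j$ for $j\neq i,i+1$; and placing \eqref{qp manin} in slots $i,i+1$ reads $(A_{\hat{q}})_{i,i+1}M_iM_{i+1}(S_{\hat{p}})_{i,i+1}=0$, equivalently $(A_{\hat{q}})_{i,i+1}M_iM_{i+1}=(A_{\hat{q}})_{i,i+1}M_iM_{i+1}(A_{\hat{p}})_{i,i+1}$. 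Sliding the remaining factors past the inserted projector then gives, for each $i$,
\begin{align*}
(A_{\hat{q}})_{i,i+1}\,M_1\cdots M_k\,(S_{\hat{p}})_{i,i+1}&=0,\\
(A_{\hat{q}})_{i,i+1}\,M_1\cdots M_k&=(A_{\hat{q}})_{i,i+1}\,M_1\cdots M_k\,(A_{\hat{p}})_{i,i+1}.
\end{align*}

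\emph{Absorption and propagation.} Since $\sigma\mapsto P_{\hat{q}}^{\sigma}$ is a (well-defined) representation of $S_k$ with each $P_{\hat{q}}^{s_i}$ an involution, the normalized (anti)symmetrizers absorb adjacent transpositions: $P_{\hat{q}}^{s_i}S_{\hat{q}}^{(k)}=S_{\hat{q}}^{(k)}=S_{\hat{q}}^{(k)}P_{\hat{q}}^{s_i}$ and $P_{\hat{q}}^{s_i}A_{\hat{q}}^{(k)}=-A_{\hat{q}}^{(k)}=A_{\hat{q}}^{(k)}P_{\hat{q}}^{s_i}$, hence $A_{\hat{q}}^{(k)}(A_{\hat{q}})_{i,i+1}=A_{\hat{q}}^{(k)}$ and $(S_{\hat{p}})_{i,i+1}S_{\hat{p}}^{(k)}=S_{\hat{p}}^{(k)}$, and likewise with $\hat{q}\leftrightarrow\hat{p}$. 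For the first identity: $A_{\hat{q}}^{(k)}M_1\cdots M_k=A_{\hat{q}}^{(k)}(A_{\hat{q}})_{i,i+1}M_1\cdots M_k=A_{\hat{q}}^{(k)}M_1\cdots M_k(A_{\hat{p}})_{i,i+1}$ by the second pairwise relation, and since $(A_{\hat{p}})_{i,i+1}=\tfrac12(1-P_{\hat{p}}^{s_i})$ this forces $A_{\hat{q}}^{(k)}M_1\cdots M_k P_{\hat{p}}^{s_i}=-A_{\hat{q}}^{(k)}M_1\cdots M_k$ for all $i$; writing an arbitrary $\sigma\in S_k$ as a product of simple transpositions and iterating gives $A_{\hat{q}}^{(k)}M_1\cdots M_k P_{\hat{p}}^{\sigma}=\mathrm{sgn}(\sigma)\,A_{\hat{q}}^{(k)}M_1\cdots M_k$, and averaging against $\mathrm{sgn}$ over $S_k$ yields $A_{\hat{q}}^{(k)}M_1\cdots M_k A_{\hat{p}}^{(k)}=A_{\hat{q}}^{(k)}M_1\cdots M_k$. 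For the second identity: rewriting $M_1\cdots M_k S_{\hat{p}}^{(k)}=M_1\cdots M_k(S_{\hat{p}})_{i,i+1}S_{\hat{p}}^{(k)}$, multiplying on the left by $(A_{\hat{q}})_{i,i+1}$, and using the first pairwise relation gives $(A_{\hat{q}})_{i,i+1}M_1\cdots M_k S_{\hat{p}}^{(k)}=0$, i.e.\ $P_{\hat{q}}^{s_i}M_1\cdots M_k S_{\hat{p}}^{(k)}=M_1\cdots M_k S_{\hat{p}}^{(k)}$ for all $i$; iterating over $S_k$ and averaging gives $S_{\hat{q}}^{(k)}M_1\cdots M_k S_{\hat{p}}^{(k)}=M_1\cdots M_k S_{\hat{p}}^{(k)}$.

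\emph{On the difficulty.} There is no real obstacle once the right move is seen: the single adjacent projector $(A_{\hat{q}})_{i,i+1}$ (resp.\ $(S_{\hat{p}})_{i,i+1}$) may be inserted for free by absorption and then ``switches on'' \eqref{qp manin} on exactly the pair $(i,i+1)$, after which all other $M_j$ slide past it. The only point requiring care is bookkeeping — keeping straight which operators act on the source space $(\mathbb{C}^m)^{\otimes k}$ and which on the target $(\mathbb{C}^n)^{\otimes k}$, and on which side each absorption identity is being applied.
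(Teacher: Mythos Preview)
Your proof is correct and is precisely the argument the paper has in mind: the paper's own proof is the single line ``This follows from Proposition~\ref{p:comm},'' and what you have written is the standard (and correct) unpacking of that sentence --- insert the adjacent projector $(A_{\hat q})_{i,i+1}$ or $(S_{\hat p})_{i,i+1}$ by absorption, invoke the $k=2$ relation on slots $(i,i+1)$, slide the remaining $M_j$'s past it, and average over $S_k$. There is nothing to add.
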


\begin{proof} This follows from Proposition \ref{p:comm}.\end{proof}

Define the operator $A_{\hat{q}\hat{p}}^{(k)}$ in
$\mathrm{End}({\mathbb{C}^{n}}^{\otimes k})$ by

\begin{align}
A_{\hat{q}\hat{p}}^{(k)}=\frac{1}{k!}
\sum_{\sigma,\rho \in S_k
\atop 1\leq i_1<\cdots<i_k\leq n}
\frac{\varepsilon(\hat p, I,\rho)}{\varepsilon(\hat q,I,\sigma)}
e_{i_{\sigma(1)}i_{\rho(1)}}\ot \cdots \ot e_{i_{\sigma(k)}i_{\rho(k)}},
\end{align}
where $I=(i_1<\cdots<i_k)$. In particular, for $k=n$
\begin{align}
A_{\hat{q}\hat{p}}^{(n)}=\frac{1}{n!}
\sum_{\sigma,\rho \in S_n}
\frac{\varepsilon(\hat p,\rho)}{\varepsilon(\hat q,\sigma)}
e_{{\sigma(1)}{\rho(1)}}\ot \cdots \ot e_{{\sigma(n)}{\rho(n)}}.
\end{align}

The following result is an easy consequence of Prop. \ref{p:comm2}.
\begin{proposition} The following identity holds
in $\mathfrak{R} \otimes {\mathbb{C}^{n}}^{\otimes k}$:
\begin{align}
A_{\hat{q}}^{(k)}M_1\cdots M_k
=
{\cdet}_{\hat{q}}(M)
A_{\hat{q}\hat{p}}^{(k)}.
\end{align}
\end{proposition}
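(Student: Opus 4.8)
The plan is to obtain the identity by evaluating both sides on the standard basis and invoking Proposition~\ref{p:comm2}. By that proposition $A_{\hat q}^{(k)}M_1\cdots M_k=A_{\hat q}^{(k)}M_1\cdots M_k A_{\hat p}^{(k)}$, so it is enough to compute the matrix of $A_{\hat q}^{(k)}M_1\cdots M_k$ in the basis $e_{i_1j_1}\otimes\cdots\otimes e_{i_kj_k}$ of $\mathrm{End}\bigl((\mathbb C^{n})^{\otimes k}\bigr)$ and to check that it coincides with $\cdet_{\hat q}(M)$ times the coefficients read off from the defining formula of $A_{\hat q\hat p}^{(k)}$. Equivalently one may work inside the quadratic algebras: applying $A_{\hat q}^{(k)}M_1\cdots M_k$ to a decomposable vector and pairing against the generators $\psi_1,\dots,\psi_n$ reduces everything to relations already derived, namely $\phi_{i_1}\cdots\phi_{i_n}=\varepsilon(\hat p,I)\,\cdet_{\hat q}(M)\,\psi_1\cdots\psi_n$ together with its minor form \eqref{lap2}.

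Concretely, first apply $M_1\cdots M_k$ to $e_{j_1}\otimes\cdots\otimes e_{j_k}$, getting $\sum_{i_1,\dots,i_k}M_{i_1j_1}\cdots M_{i_kj_k}\,e_{i_1}\otimes\cdots\otimes e_{i_k}$, and then apply $A_{\hat q}^{(k)}$. Since $A_{\hat q}^{(k)}$ is the image of the antisymmetrizer of the $S_k$-action $s_i\mapsto P_{\hat q}^{s_i}$, all terms with a repeated row index die, and for each $I=(i_1<\cdots<i_k)$ the surviving $k!$ monomials are reorganized by the Manin relations \eqref{rel manin matrix} into the column $\hat q$-determinant normal form. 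Collecting the $\hat q$-inversion signs produced by this reordering, together with the factor $\varepsilon(\hat p,I,\rho)$ that appears when the columns $j_1,\dots,j_k$ are brought into increasing order, one recognizes exactly the weight $\varepsilon(\hat p,I,\rho)/\varepsilon(\hat q,I,\sigma)$ occurring in $A_{\hat q\hat p}^{(k)}$; the scalar $\cdet_{\hat q}(M)$ is pulled out using the Proposition that $\sum_{\sigma\in S_n}\varepsilon(\hat q,\sigma)M_{\sigma(1),i_1}\cdots M_{\sigma(n),i_n}=\varepsilon(\hat p,I)\,\cdet_{\hat q}(M)$ (and its minor analogue). Comparing with the explicit expansion of $A_{\hat q\hat p}^{(k)}$ term by term gives the asserted equality.

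The substantive point — everything else being formal — is the bookkeeping of the parameter weights: one must check that reordering a product $M_{i_{\sigma(1)}j_1}\cdots M_{i_{\sigma(k)}j_k}$ with $I$ increasing into $\cdet_{\hat q}$-normal form, and simultaneously sorting the column indices, yields precisely the ratio $\varepsilon(\hat p,I,\rho)/\varepsilon(\hat q,I,\sigma)$ with no leftover factors, and that the coefficient so extracted does not depend on which $\sigma$ was used to name a given basis monomial $e_{i_{\sigma(1)}i_{\rho(1)}}\otimes\cdots\otimes e_{i_{\sigma(k)}i_{\rho(k)}}$. Both are consequences of the multiplicativity of the $\varepsilon$-symbol under concatenation of multi-indices and of the independence of $P_{\hat q}^{\sigma}$ from the chosen reduced word — exactly the facts already recorded above — which is why the statement is indeed an easy consequence of Proposition~\ref{p:comm2}.
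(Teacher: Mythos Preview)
Your approach is the same as the paper's: the paper gives no argument beyond the sentence ``an easy consequence of Prop.~\ref{p:comm2}'', and your sketch is precisely an expansion of that --- apply $M_1\cdots M_k$ to a basis vector, antisymmetrize with $A_{\hat q}^{(k)}$, and identify the resulting $\hat q$-inversion weights with those in the definition of $A_{\hat q\hat p}^{(k)}$.

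There is, however, a genuine gap in your handling of the case $k<n$. When you write that ``the scalar $\cdet_{\hat q}(M)$ is pulled out using \dots\ (and its minor analogue)'', the minor analogue does not give the full determinant: for a $k$-subset $J$ of columns and a $k$-subset $I$ of rows one obtains $\cdet_{\hat q}(M_{IJ})$, which depends on $I$ and $J$ and is not a single scalar. Indeed $A_{\hat q\hat p}^{(k)}$ as defined has no matrix elements between tensors supported on distinct index sets $I\neq J$, whereas $A_{\hat q}^{(k)}M_1\cdots M_k$ certainly does (its $(I,J)$-block carries $\cdet_{\hat q}(M_{IJ})$). So the identity with the single scalar $\cdet_{\hat q}(M)$ on the right can only be correct as stated when $k=n$, which is the sole case invoked afterwards in the proof of Jacobi's ratio theorem. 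For $k=n$ your computation goes through verbatim; for $k<n$ one should either restrict the statement or replace the scalar by the appropriate block of minor determinants.
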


The  following theorem is an analog of Jacobi's ratio theorem  for multiparametric Manin matrices.

\begin{theorem}[Jacobi's ratio Theorem]\label{Jacobi thm}
Let $M$ is an invertible $n\times n$ $(\hat{q},\hat{p})$-Manin matrix,  and let
 %{\color{red}\cancel{Let}} 
 $I=(i_1<i_2<\cdots<i_k)$ be a multi-index of increasing integers and $J=(j_1,j_2,\cdots,j_k)$ be any multi-index.
Then
\begin{align}
\varepsilon (\hat {p},I^c\oplus I^{\tau})  {\cdet}_{\hat{q}}(M)
{\cdet}_{\hat{p'}}(M^{-1}_{IJ})
=
\varepsilon (\hat {q},J^c\oplus J^{\tau}) {\cdet}_{\hat{q}}(M_{J^cI^c}).
\end{align}
\end{theorem}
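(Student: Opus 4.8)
The plan is to evaluate a single element of the quantum exterior algebra $\Xi_{A_{\hat{q}}}(\mathfrak{R})$ in two different ways and compare. Write $I^c=(c_1<\cdots<c_{n-k})$; we may assume the entries of $J$ are pairwise distinct (otherwise $J^c$ is not even defined), and introduce the mixed monomial
\begin{equation*}
\Omega\ :=\ \phi_{c_1}\cdots\phi_{c_{n-k}}\,\psi_{j_k}\psi_{j_{k-1}}\cdots\psi_{j_1}\ \in\ \Xi_{A_{\hat{q}}}(\mathfrak{R}),
\end{equation*}
whose $\phi$-indices run over $I^c$ in increasing order and whose $\psi$-indices run over $J^\tau$.

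First I would compute $\Omega$ by substituting $\phi_{c_a}=\sum_{r}M_{rc_a}\psi_r$ (each $M_{rc_a}$ commutes with every $\psi$). Since $\psi_r^2=0$, only the tuples $(r_1,\ldots,r_{n-k})$ that are orderings $R$ of $J^c$ contribute, and for such an ordering $\psi_{r_1}\cdots\psi_{r_{n-k}}\psi_{j_k}\cdots\psi_{j_1}=\varepsilon(\hat{q},R\oplus J^\tau)\,\psi_1\cdots\psi_n$. Using that $\varepsilon(\hat{q},-)$ is multiplicative over concatenation, and that the ``cross'' factor $\prod_{r\in J^c,\,j\in J,\,r>j}(-q_{rj})$ depends only on the sets $J^c$ and $J$, the sum over $R$ collapses to a single column $\hat{q}$-minor, giving
\begin{equation*}
\Omega\ =\ \varepsilon(\hat{q},\,J^c\oplus J^\tau)\,{\cdet}_{\hat{q}}(M_{J^cI^c})\,\psi_1\cdots\psi_n .
\end{equation*}

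Second I would compute $\Omega$ using $\psi_i=\sum_j\phi_j (M^{-1})_{ji}$. Exactly as in the proof of the preceding Proposition (the entries of $M^{-1}$ commute with all $\psi$'s and can be gathered on the far right), one obtains $\psi_{j_k}\cdots\psi_{j_1}=\sum_{K}\varepsilon(\hat{p},K^\tau)\,\phi_{\kappa_1}\cdots\phi_{\kappa_k}\,{\cdet}_{\hat{p'}}(M^{-1}_{KJ})$, the sum over increasing $K=(\kappa_1<\cdots<\kappa_k)\subset[1,n]$. Multiplying this on the left by $\phi_{c_1}\cdots\phi_{c_{n-k}}$ and using $\phi_a^2=0$ kills every term with $K\neq I$; for the surviving term $K=I$ one has $\phi_{c_1}\cdots\phi_{c_{n-k}}\phi_{i_1}\cdots\phi_{i_k}=\varepsilon(\hat{p},I^c\oplus I)\,\phi_1\cdots\phi_n$, then $\phi_1\cdots\phi_n={\cdet}_{\hat{q}}(M)\,\psi_1\cdots\psi_n$, and finally $\psi_1\cdots\psi_n$ may be carried past ${\cdet}_{\hat{p'}}(M^{-1}_{IJ})$ (a polynomial in the entries of $M^{-1}$) to the far right. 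Together with the elementary sign identity $\varepsilon(\hat{p},I^\tau)\,\varepsilon(\hat{p},I^c\oplus I)=\varepsilon(\hat{p},I^c\oplus I^\tau)$, this yields
\begin{equation*}
\Omega\ =\ \varepsilon(\hat{p},\,I^c\oplus I^\tau)\,{\cdet}_{\hat{q}}(M)\,{\cdet}_{\hat{p'}}(M^{-1}_{IJ})\,\psi_1\cdots\psi_n .
\end{equation*}

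Equating the two expressions for $\Omega$ and cancelling $\psi_1\cdots\psi_n$---legitimate because $\psi_1\cdots\psi_n$ freely generates the degree-$n$ component of $\Xi_{A_{\hat{q}}}(\mathfrak{R})$ over $\mathfrak{R}$---gives the asserted identity. The only genuinely delicate step will be the bookkeeping of the $\hat{q}$- and $\hat{p}$-signs under reversal and concatenation of multi-indices, so that the two prefactors come out precisely as $\varepsilon(\hat{q},J^c\oplus J^\tau)$ and $\varepsilon(\hat{p},I^c\oplus I^\tau)$; the non-commutativity of $\mathfrak{R}$, on the other hand, causes no trouble here, since in both computations the exterior monomial $\psi_1\cdots\psi_n$ separates off cleanly on the right.
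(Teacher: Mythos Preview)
Your argument is correct and complete; the sign bookkeeping checks out exactly as you describe, and the case of repeated indices in $J$ is handled automatically since then $\psi_{j_k}\cdots\psi_{j_1}=0$ forces $\Omega=0$, whence the left-hand side vanishes via your second computation, matching $\varepsilon(\hat q,J^c\oplus J^\tau)=0$ on the right.

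Your route, however, is genuinely different from the paper's. The paper works in $(\mathbb{C}^n)^{\otimes n}$ with the antisymmetrizer: starting from the operator identity $A_{\hat q}^{(n)}M_1\cdots M_n={\cdet}_{\hat q}(M)\,A_{\hat q\hat p}^{(n)}$, it multiplies on the right by $M_n^{-1}\cdots M_{n-k+1}^{-1}$, applies both sides to the vector $e_{i_{k+1}}\otimes\cdots\otimes e_{i_n}\otimes e_{j_k}\otimes\cdots\otimes e_{j_1}$, and reads off the coefficient of $e_1\otimes\cdots\otimes e_n$. You instead stay inside the Grassmann algebra $\Xi_{A_{\hat q}}(\mathfrak{R})$ and compute the mixed monomial $\Omega=\phi_{I^c}\psi_{J^\tau}$ two ways, expanding the $\phi$'s via $M$ and the $\psi$'s via $M^{-1}$. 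Your method is precisely in the spirit of Section~2 (Laplace expansion, Cauchy--Binet, and the proposition you invoke on $\psi_{j_m}\cdots\psi_{j_1}$), and it avoids introducing the auxiliary operator $A_{\hat q\hat p}^{(n)}$ altogether. The paper's $R$-matrix formulation, on the other hand, is the one that scales naturally to the later material on $q$-Yangians, where the tensor calculus with (anti)symmetrizers becomes the organizing language. The two arguments are dual to one another---your $\psi_1\cdots\psi_n$ plays the same role as their $e_1\otimes\cdots\otimes e_n$---but yours is arguably more elementary and self-contained given what has already been established in Section~2.
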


\begin{proof}
Multiplying the equation
\begin{equation}
A_{\hat{q}}^{(n)}M_1\cdots M_n
=
{\cdet}_{\hat{q}}(M)
A_{\hat{q}\hat{p}}^{(n)}.
\end{equation}
 by $M_n^{-1}\cdots M_{n-k+1}^{-1}$ from the right, one get that
\begin{equation}\label{e:AM}
A_{\hat{q}}^{(n)}M_1\cdots M_{n-k}
=
{\cdet}_{\hat{q}}(M)
A_{\hat{q}\hat{p}}^{(n)}M_n^{-1}\cdots M_{n-k+1}^{-1}.
\end{equation}

Let $I^c=(i_{k+1}<\cdots<i_{n})$.
Applying both sides of \eqref{e:AM} to the vector $v=e_{i_{k+1}}\otimes\dots \otimes e_{i_n} \otimes e_{j_{k}} \otimes\dots \otimes e_{j_{1}}$.
we have that
\begin{equation}\label{co R}
\begin{split}
&{\cdet}_{\hat{q}}(M)
A_{\hat{q}\hat{p}}^{(n)}
M_n^{-1}\cdots M_{n-k+1}^{-1}v\\
=&{\cdet}_{\hat{q}}(M)
{\cdet}_{\hat{p'}}(M^{-1}_{IJ})
A_{\hat{q}\hat{p}}^{(n)}
e_{i_{k+1}}\otimes\dots \otimes e_{i_n} \otimes e_{i_{k}} \otimes\dots \otimes e_{i_{1}},
\end{split}
\end{equation}
where the coefficient of $e_{1}\otimes\dots \otimes e_{n}$ %in \eqref{co R}
is
$\varepsilon (\hat {p},I^c\oplus I^{\tau})  {\cdet}_{\hat{q}}(M)
{\cdet}_{\hat{p'}}(M^{-1}_{IJ})$.

If $J$ has two equal indices, then $A_{\hat{q}}^{(n)}M_1\cdots M_{n-k}v=0$.
If $J$ does not have two equal indices, say $J^c=(j_{k+1}<\cdots<j_{n})$, then
\begin{equation}\label{co L}
\begin{split}
&A_{\hat{q}}^{(n)}M_1\cdots M_{n-k}
v\\
=&
A_{\hat{q}}^{(n)}{\cdet}_{\hat{q}}(M_{J^cI^c})
e_{j_{k+1}}\otimes\dots \otimes e_{j_n} \otimes e_{j_{k}} \otimes\dots \otimes e_{j_{1}}
\end{split}
\end{equation}
The coefficient of $e_{1}\otimes\dots \otimes e_{n}$ in \eqref{co L} is
$\varepsilon (\hat {q},J^c\oplus J^{\tau}) {\cdet}_{\hat{q}}(M_{J^cI^c})$.
Note that $\varepsilon (\hat {q},J^c\oplus J^{\tau})=0$ if  $J$ has two equal indices.
This completes the proof.

%*************
%Applying both sides to the vector $e_{j_1}\otimes\dots \otimes e_{j_k} \otimes e_{i_{n}} \otimes\dots \otimes e_{i_{k+1}}$ and comparing the
%coefficient of $e_{1}\otimes e_{2}\otimes\dots \otimes e_{n}$ we obtain that
%\begin{equation}
%\begin{split}
%\mu({\hat{q}},I,I^c)^{-1}
%\mu({\hat{q}},I^c,\tau)^{-1}
% {\det}_{\hat{q}}(M _{IJ})\\
% =
% \mu({\hat{p}},J,J^c)^{-1}
%\mu({\hat{p}},J^c,\tau)^{-1}
% {\det}_{\hat{q}}(M )
%{\det}_{\hat{p'}}(M^{-1}_{J^cI^c})
%\end{split}
%\end{equation}
%This completes the proof.

\end{proof}

\begin{proposition}\label{manin matrix invertible}
If  $M$ is an invertible  $n\times n$ $(\hat{q},\hat{p})$-Manin matrix such that
${\cdet}_{\hat{q}}(M)$ is invertible, then
$M^{-1}$ is an $n\times n$ $(\hat{p'},\hat{q'})$-Manin matrix, where $\hat{q'}=(q_{ji})$ is the transpose of $\hat{q}$.
\end{proposition}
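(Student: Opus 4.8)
The plan is to obtain the defining relation $A_{\hat{p'}}(M^{-1})_1(M^{-1})_2S_{\hat{q'}}=0$ for $M^{-1}$ from $A_{\hat q}M_1M_2S_{\hat p}=0$ by an $R$-matrix argument resting on the invertibility of $M_2M_1$. Write $P=\sum_{i,j}E_{ij}\otimes E_{ji}$ for the flip on $\mathbb{C}^{n}\otimes \mathbb{C}^{n}$. A direct check gives $PP_{\hat q}P=P_{\hat{q'}}$, hence $PA_{\hat q}P=A_{\hat{q'}}$ and $PS_{\hat q}P=S_{\hat{q'}}$ (and likewise for $\hat p$), while $PM_1P=M_2$, $PM_2P=M_1$. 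Conjugating $A_{\hat q}M_1M_2S_{\hat p}=0$ by $P$ therefore yields the equivalent identity
\[
A_{\hat{q'}}\,(M_2M_1)\,S_{\hat{p'}}=0 .
\]
Since $M$ is invertible, $M_2M_1$ is invertible with $(M_2M_1)^{-1}=(M^{-1})_1(M^{-1})_2$ (indeed $M_2M_1\cdot (M^{-1})_1(M^{-1})_2=M_2\big(M_1(M^{-1})_1\big)(M^{-1})_2=M_2(M^{-1})_2=1$, and similarly on the other side). Thus it suffices to establish the implication: \emph{if $T\in\mathrm{End}(\mathbb{C}^{n}\otimes \mathbb{C}^{n})\otimes\mathfrak R$ is invertible and $A_{\hat a}\,T\,S_{\hat b}=0$ with $\mathrm{rk}\,A_{\hat a}=\mathrm{rk}\,A_{\hat b}$, then $A_{\hat b}\,T^{-1}\,S_{\hat a}=0$;} applying it with $\hat a=\hat{q'}$, $\hat b=\hat{p'}$ and $T=M_2M_1$ gives exactly $A_{\hat{p'}}(M^{-1})_1(M^{-1})_2S_{\hat{q'}}=0$, i.e. that $M^{-1}$ is a $(\hat{p'},\hat{q'})$-Manin matrix.

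To prove the implication I would use only that $A_{\hat a},S_{\hat a}$ and $A_{\hat b},S_{\hat b}$ are complementary idempotents of $\mathrm{End}(\mathbb{C}^{n}\otimes \mathbb{C}^{n})$ with \emph{scalar} entries and equal ranks $\mathrm{rk}\,S_{\hat a}=\mathrm{rk}\,S_{\hat b}=:r$, $\mathrm{rk}\,A_{\hat a}=\mathrm{rk}\,A_{\hat b}=:\rho$, $r+\rho=n^{2}$ (one has $r=\binom{n+1}{2}$, $\rho=\binom{n}{2}$, by $P_{\hat a}^{2}=1$ and the explicit action of $P_{\hat a}$ on the basis $e_i\otimes e_j$: each $e_i\otimes e_i$ is fixed, and each plane $\mathrm{span}\{e_i\otimes e_j,\,e_j\otimes e_i\}$ with $i<j$ splits into a $(+1)$- and a $(-1)$-eigenline). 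Pick scalar matrices $V,U\in\mathrm{GL}_{n^{2}}(\mathbb{C})$ with $VS_{\hat a}V^{-1}=US_{\hat b}U^{-1}=\mathrm{diag}(I_r,0)$, so automatically $VA_{\hat a}V^{-1}=UA_{\hat b}U^{-1}=\mathrm{diag}(0,I_\rho)$. Then $A_{\hat a}TS_{\hat b}=0$ becomes $\mathrm{diag}(0,I_\rho)\,\widetilde T\,\mathrm{diag}(I_r,0)=0$, with $\widetilde T:=V\,T\,U^{-1}$; that is, the lower-left $\rho\times r$ block of $\widetilde T$ vanishes, so $\widetilde T$ is block upper-triangular for the $(r,\rho)$-splitting — and invertible.

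The crux is then the standard fact that an invertible block upper-triangular matrix over $\mathfrak R$ has invertible diagonal blocks and block upper-triangular inverse; in particular the lower-left block of $\widetilde T^{-1}=U\,T^{-1}\,V^{-1}$ vanishes, which conjugating back by $U$ and $V$ reads precisely $A_{\hat b}\,T^{-1}\,S_{\hat a}=0$. This proves the implication and hence the proposition. The main obstacle is exactly this inversion step: one cannot cancel $M_2M_1$ directly out of $A_{\hat{q'}}(M_2M_1)S_{\hat{p'}}=0$ because the entries $M_{ij}$ do not commute and $M_1,M_2$ do not commute either, so the whole argument is organized so that the only "cancellation'' used is the block-triangular inversion above, which is legitimate precisely because $V$, $U$ and all the projections have scalar entries and therefore commute with $\mathfrak R$.
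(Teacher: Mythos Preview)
Your approach is attractive and quite different from the paper's, but the step you flag as the crux and then call ``standard'' is where it breaks: over a general noncommutative ring $\mathfrak R$, an invertible block upper-triangular matrix need \emph{not} have block upper-triangular inverse, and its diagonal blocks need not be invertible. A clean counterexample: take any ring containing elements $a,b$ with $ab=1\neq ba$, set $e=1-ba$ (so $e^{2}=e$, $ae=0$, $eb=0$), and let
\[
T=\begin{pmatrix} b & e\\ 0 & a\end{pmatrix},\qquad
S=\begin{pmatrix} a & 0\\ e & b\end{pmatrix}.
\]
Then $TS=ST=I$, so $T$ is upper-triangular and two-sided invertible, yet $T^{-1}=S$ has nonzero lower-left entry $e$. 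Your argument uses only the invertibility of $M_{2}M_{1}$ and the fact that the projectors are scalar; nothing in the hypotheses forces $\mathfrak R$ to be stably finite, which is precisely the extra assumption under which your block-inversion step would go through (since then $DD'=I_{\rho}$ implies $D'D=I_{\rho}$, and $DC'=0$ gives $C'=0$). So as written the implication ``$A_{\hat a}TS_{\hat b}=0\Rightarrow A_{\hat b}T^{-1}S_{\hat a}=0$'' is not valid in the generality needed.

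The paper sidesteps this entirely by going through the Jacobi ratio theorem (Theorem~\ref{Jacobi thm}), which rests on the identity $A_{\hat q}^{(n)}M_{1}\cdots M_{n}={\cdet}_{\hat q}(M)\,A_{\hat q\hat p}^{(n)}$ and multiplication by $M_{n}^{-1}M_{n-1}^{-1}$, with no finiteness hypothesis on $\mathfrak R$. For $|I|=2$ that theorem expresses every $2\times 2$ minor ${\cdet}_{\hat{p'}}(M^{-1}_{IJ})$ as a scalar multiple of ${\cdet}_{\hat q}(M)^{-1}{\cdet}_{\hat q}(M_{J^{c}I^{c}})$. Taking $J=(k,k)$ gives ${\cdet}_{\hat{p'}}(M^{-1}_{IJ})=0$, which is the column relation for $M^{-1}$; comparing $J=(k,l)$ with $J=(l,k)$ for $k<l$ gives ${\cdet}_{\hat{p'}}(M^{-1}_{I,(k,l)})=(-q_{kl})^{-1}{\cdet}_{\hat{p'}}(M^{-1}_{I,(l,k)})$, which unwinds to the cross relation. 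If you want to keep your $R$-matrix framing, you would need to replace the block-triangular inversion by an argument that actually uses the Manin structure (or explicitly assume $\mathfrak R$ stably finite and say so).
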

\begin{proof}
Let $I=(i<j)$, $J=(k,k)$.
By Theorem \ref{Jacobi thm}, we have
\begin{align}
{\cdet}_{\hat{p'}}(M^{-1}_{IJ})
=
M^{-1}_{ik}M^{-1}_{jk}-p_{ij}M^{-1}_{jk}M^{-1}_{ik}=0.
\end{align}

Let $I=(i<j)$, $J=(k<l)$.
Then
\begin{align}
\varepsilon (\hat {q},J^c\oplus J^{\tau})
=(-q_{kl})^{-1}\varepsilon (\hat {q},J^c\oplus J).
\end{align}
It follows from  Theorem \ref{Jacobi thm} that
\begin{align}
{\cdet}_{\hat{p'}}(M^{-1}_{IJ})=(-q_{kl})^{-1}{\cdet}_{\hat{p'}}(M^{-1}_{IJ^{\tau}}).
\end{align}
Therefore,
\begin{align}
M^{-1}_{ik} M^{-1}_{jl}- p_{ij}M^{-1}_{jk}M^{-1}_{il}
=(-q_{kl})^{-1}(M^{-1}_{il} M^{-1}_{jk}- p_{ij}M^{-1}_{jl}M^{-1}_{ik}).
\end{align}
So
$M^{-1}$ is a $(\hat{p'},\hat{q'})$-Manin matrix.
\end{proof}

From the generalized Jacobi's ratio Theorem \ref{Jacobi thm} we have the following analogs of Cayley's complementary identity, Muir's law and Sylvester's theorem for multiparametric Manin matrices.

\begin{theorem}[Cayley's complementary identity]\label{cayley thm }
Let $M$ be an invertible $n\times n$ $(\hat{q},\hat{p})$-Manin matrix
and let ${\cdet}_{\hat{q}}(M)$ be invertible.
Suppose one is given a minor identity of determinants
\begin{equation}
\sum_{r=1}^{k}b_r \prod_{s=1}^{m_r}
{\cdet}_{\hat q} (M_{I_{rs}J_{rs}})=0,
\end{equation}
where  $I_{rs}$ are increasing multi-indices, $J_{rs}$ are multi-indices of distinct integers and
 $b_r\in \mathbb C(q_{ij},p_{ij})$.
Then the following identity holds

\begin{equation}
\sum_{r=1}^{k} b_r'\prod_{s=1}^{m_r}
\frac{\varepsilon (\hat {q},J^c_{rs}\oplus J_{rs}^{\tau})}
{\varepsilon (\hat {p},I^c_{rs}\oplus I_{rs}^{\tau})} {\cdet}_{\hat{q}}(M)^{-1}{\cdet}_{\hat{q}}(M_{J^c_{rs}I^c_{rs}})=0,
\end{equation}
where $b_r'$ is obtained from $b_r$ by replacing $q_{ij}$ by $p_{ij}^{-1}$ and $p_{ij}$ by $q_{ij}^{-1}$ respectively.
\end{theorem}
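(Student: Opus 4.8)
The plan is to derive Cayley's complementary identity from the generalized Jacobi ratio theorem (Theorem \ref{Jacobi thm}) by a substitution-of-matrix argument, exactly parallel to the classical derivation of Cayley's law of complementaries from Jacobi's ratio theorem. The starting observation is that Theorem \ref{Jacobi thm} gives a dictionary translating minors of $M$ into minors of $M^{-1}$: for an increasing multi-index $I$ and an arbitrary multi-index $J$ of distinct integers,
\begin{equation*}
{\cdet}_{\hat{q}}(M_{J^cI^c})
=
\frac{\varepsilon(\hat p,I^c\oplus I^{\tau})}{\varepsilon(\hat q,J^c\oplus J^{\tau})}
{\cdet}_{\hat{q}}(M)\,{\cdet}_{\hat{p'}}(M^{-1}_{IJ}).
\end{equation*}
So I would read the hypothesis as a \emph{universal} polynomial identity: it holds for every $(\hat q,\hat p)$-Manin matrix $M$ (the coefficients $b_r$ do not depend on $M$, only on the parameters). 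In particular, since by the preceding Proposition the inverse $M^{-1}$ of an $n\times n$ $(\hat q,\hat p)$-Manin matrix is a $(\hat{p'},\hat{q'})$-Manin matrix, and conversely every $(\hat{p'},\hat{q'})$-Manin matrix arises this way (invert back), I may apply the given identity with $M$ replaced by $N:=M^{-1}$ and the parameter pair $(\hat q,\hat p)$ replaced by $(\hat{p'},\hat{q'})$. This is precisely the substitution that turns $b_r$ into $b_r'$ (replacing $q_{ij}\mapsto p_{ij}^{-1}$, $p_{ij}\mapsto q_{ij}^{-1}$, because the roles of the two parameter matrices swap and get inverted under $M\mapsto M^{-1}$).

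Concretely, the steps are: (1) State the hypothesis identity for the $(\hat{p'},\hat{q'})$-Manin matrix $N$, namely
\begin{equation*}
\sum_{r=1}^{k} b_r'\prod_{s=1}^{m_r}{\cdet}_{\hat{p'}}(N_{I_{rs}J_{rs}})=0,
\end{equation*}
being careful that in this substituted identity the first parameter matrix is $\hat{p'}$, so the relevant column-determinant is ${\cdet}_{\hat{p'}}$. (2) Now set $N=M^{-1}$ and rewrite each factor ${\cdet}_{\hat{p'}}(M^{-1}_{I_{rs}J_{rs}})$ using the Jacobi dictionary above, producing
\begin{equation*}
{\cdet}_{\hat{p'}}(M^{-1}_{I_{rs}J_{rs}})
=
\frac{\varepsilon(\hat q,J^c_{rs}\oplus J_{rs}^{\tau})}{\varepsilon(\hat p,I^c_{rs}\oplus I_{rs}^{\tau})}
{\cdet}_{\hat{q}}(M)^{-1}\,{\cdet}_{\hat{q}}(M_{J^c_{rs}I^c_{rs}}).
\end{equation*}
(3) Substitute these into the sum and factor out the scalar parameter ratios; since ${\cdet}_{\hat q}(M)$ and the $\varepsilon$-factors are central (the $\varepsilon$'s are scalars in $\mathbb C(q_{ij},p_{ij})$, and ${\cdet}_{\hat q}(M)^{\pm1}$ commutes with all ${\cdet}_{\hat q}(M_{J^c I^c})$ by the relations already developed — this is the point where one invokes that complementary minors of a Manin matrix commute appropriately), one collects exactly the asserted identity. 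A small check: the index set in the conclusion, $I^c_{rs}$ and $J^c_{rs}$ relative to $[1,n]$, is increasing whenever the complement is taken in $\{1,\dots,n\}$, so ${\cdet}_{\hat q}(M_{J^c_{rs}I^c_{rs}})$ is well-defined with $J^c_{rs}$ increasing as required by the definition of the minor column determinant.

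The main obstacle is bookkeeping rather than depth: one must verify that the commutativity used in step (3) is legitimate — i.e.\ that within each product $\prod_s$ the factors ${\cdet}_{\hat q}(M_{J^c_{rs}I^c_{rs}})$ together with ${\cdet}_{\hat q}(M)^{-1}$ can be reordered to match the claimed product, and that multiplying through by powers of ${\cdet}_{\hat q}(M)$ (which requires $M$ invertible, hence working over the localization where ${\cdet}_{\hat q}(M)$ is inverted) does not introduce sign or parameter discrepancies. One should also confirm that the hypothesis's requirement "$J_{rs}$ of distinct integers" is preserved under the substitution (it is, since we only relabel roles), and that the $\varepsilon$-factor algebra — e.g.\ that $\varepsilon(\hat{p'},-)=\varepsilon(\hat p,-)^{-1}$ on a fixed ordering pattern — is applied consistently so that the ratios in the conclusion come out as stated. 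Once these clerical points are checked, the theorem follows immediately from Theorem \ref{Jacobi thm} and the Manin-matrix inversion Proposition with no further computation.
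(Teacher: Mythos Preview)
Your approach is essentially identical to the paper's own proof: apply the hypothesis identity to the $(\hat{p'},\hat{q'})$-Manin matrix $M^{-1}$ (which converts $b_r$ to $b_r'$ and the minors to ${\cdet}_{\hat{p'}}(M^{-1}_{I_{rs}J_{rs}})$), and then substitute each such factor by its Jacobi-ratio expression. One small remark: your worry in step~(3) about commuting ${\cdet}_{\hat q}(M)^{-1}$ past the complementary minors is unnecessary, since the conclusion is stated with the factors ${\cdet}_{\hat q}(M)^{-1}{\cdet}_{\hat q}(M_{J^c_{rs}I^c_{rs}})$ left interleaved in the product over $s$, exactly as the term-by-term substitution produces them---no reordering is required.
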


\begin{proof}
Applying the minor identity to $M^{-1}$ we get that
\begin{equation}
\sum_{r=1}^{k}b_r' \prod_{s=1}^{m_r}
{\cdet}_{\hat {p'}} (M^{-1}_{I_{rs}J_{rs}})=0.
\end{equation}
Substituting
$\frac{\varepsilon (\hat {q},J^c_{rs}\oplus J_{rs}^{\tau})}
{\varepsilon (\hat {p},I^c_{rs}\oplus I_{rs}^{\tau})} {\cdet}_{\hat{q}}(M)^{-1}{\cdet}_{\hat{q}}(M_{J^c_{rs}I^c_{rs}})$
for  ${\cdet}_{\hat{p'}}(M^{-1}_{I_{rs}J_{rs}})$,
we get the result. %the proposition.
\end{proof}

\begin{theorem}[Muir's law]\label{muir law}
Let $M$ be an  $n\times n$ $(\hat{q},\hat{p})$-Manin matrix and $K=\{1,2,\ldots,m\}$, $L=\{m+1,m+2,\ldots,n\}$, where $m<n$. Suppose that $M$, $M_{KK}$, $M_{LL}$, ${\cdet}_{\hat{q}}(M)$, ${\cdet}_{\hat{q}}(M_{KK})$ and ${\cdet}_{\hat{q}}(M_{LL})$ are invertible and the following minor identity holds
\begin{equation}
\sum_{r=1}^{k}b_r \prod_{s=1}^{m_r}
{\cdet}_{\hat q} (M_{I_{rs}J_{rs}})=0,
\end{equation}
where $I_{rs}$ are increasing multi-indices, $J_{rs}$ are multi-indices without equal indices and
$I_{rs},J_{rs}\subset L$, $b_r\in \mathbb C(q_{ij},p_{ij})$.
Then the following identity holds
\begin{equation}
\sum_{r=1}^{k} b_r\prod_{s=1}^{m_r}
{\cdet}_{\hat{q}}(M_{KK})^{-1}
 {\cdet}_{\hat{q}}(M_{  (K\cup I_{rs})(K\cup J_{rs}) })=0
\end{equation}
\end{theorem}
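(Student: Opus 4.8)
Let me describe the route I would take. The plan is to realize the quantities $\cdet_{\hat q}(M_{KK})^{-1}\cdet_{\hat q}(M_{(K\cup I)(K\cup J)})$ as the minors of a single $(n-m)\times(n-m)$ Manin matrix — the Schur complement of $M$ along the block $M_{KK}$ — and then to instantiate the hypothesized minor identity at that matrix. Write $L=\{m+1,\dots,n\}$ and partition $M$ into blocks $A=M_{KK}$, $B=M_{KL}$, $C=M_{LK}$, $D=M_{LL}$. Since $A$ is an $m\times m$ $(\hat q|_K,\hat p|_K)$-Manin matrix, Proposition \ref{inverse of manin} makes it invertible over the localization $\mathfrak R'$ of $\mathfrak R$ at $\cdet_{\hat q}(A)$, and I would define the Schur complement $\widetilde M:=D-C\,A^{-1}B$, with the three blocks multiplied in exactly this order, as a matrix over $\mathfrak R'$.

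Everything then reduces to two claims, which I would establish together by induction on $|K|$. (i) \emph{$\widetilde M$ is an $(n-m)\times(n-m)$ $(\hat q|_L,\hat p|_L)$-Manin matrix.} (ii) \emph{Quotient formula:} for any increasing multi-index $I\subset L$ and any multi-index $J\subset L$ without repeated entries,
\begin{equation*}
\cdet_{\hat q}\bigl(M_{(K\cup I)(K\cup J)}\bigr)=\cdet_{\hat q}(M_{KK})\,\cdet_{\hat q}\bigl(\widetilde M_{IJ}\bigr),
\end{equation*}
where $K\cup I$ is ordered increasingly and $K\cup J$ is ordered as $(1,\dots,m)$ followed by $J$; no $\varepsilon$-prefactor intervenes precisely because $K=\{1,\dots,m\}$ is an initial segment, and the identical reordering of columns on both sides reduces the general $J$ to the increasing case. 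The base case $|K|=1$ of (ii) is obtained by expanding $\cdet_{\hat q}(M_{(K\cup I)(K\cup J)})$ along the column of the single index of $K$: the relations \eqref{rel manin matrix} let one collect the entry $M_{11}$ (hence $M_{11}^{-1}$) to the left and recognize the cofactors as $\widetilde M_{ij}=M_{ij}-M_{i1}M_{11}^{-1}M_{1j}$ — this is the multiparametric Schur/Dodgson identity and also falls out of the expression of $\cdet_{\hat q}$ as a product of quasideterminants established above, or of a Gaussian elimination in $\Xi_{A_{\hat q}}(\mathfrak R)$. The base case of (i) is then the statement, directly checkable from \eqref{rel manin matrix}, that a one-step Schur complement of a Manin matrix is Manin. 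The inductive step peels the index $1$ off $K$, replaces $M$ by its one-step Schur complement $M':=D'-C'(M_{11})^{-1}B'$ (a smaller Manin matrix) and $K$ by $K\setminus\{1\}$, and invokes the Haynsworth-type identity $\widetilde M=M'/M'_{(K\setminus\{1\})(K\setminus\{1\})}$ together with the inductive hypothesis and the base case applied to the block $M_{KK}$.

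Granting (i) and (ii), the proof concludes at once: the hypothesized minor identity is valid for \emph{every} $(\hat q|_L,\hat p|_L)$-Manin matrix — exactly as in the proof of Cayley's complementary identity above, where the identity is applied to $M^{-1}$ — so instantiating it at $\widetilde M$ gives $\sum_{r}b_r\prod_{s}\cdet_{\hat q}(\widetilde M_{I_{rs}J_{rs}})=0$, and substituting $\cdet_{\hat q}(\widetilde M_{I_{rs}J_{rs}})=\cdet_{\hat q}(M_{KK})^{-1}\cdet_{\hat q}(M_{(K\cup I_{rs})(K\cup J_{rs})})$ from (ii) yields the assertion.

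The main obstacle is the noncommutativity. In the commutative case one would factor $M_{(K\cup I)(K\cup J)}$ as a block lower-triangular times a block upper-triangular matrix and read off (ii) from the Cauchy–Binet formula (Proposition \ref{det cauchy-Binet}); here this fails, since the entries of $A^{-1}$ do not commute with those of $B$, $C$, $D$, so Cauchy–Binet does not apply to that factorization. Keeping track of the exact order in which $\cdet_{\hat q}(M_{KK})$ and the entries $M_{ij}$ occur — which is precisely what forces the honest column-expansion argument governed by \eqref{rel manin matrix} instead of a one-line block computation — is the technical heart of the matter, and the same care is what is needed to verify directly that $\widetilde M$ satisfies \eqref{rel manin matrix}.
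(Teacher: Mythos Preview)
Your approach is sound as an outline, but it differs substantially from the paper's. The paper does not touch Schur complements at all: it applies Cayley's complementary identity (Theorem~\ref{cayley thm }) twice --- first with respect to the set $L$, which converts each ${\cdet}_{\hat q}(M_{I_{rs}J_{rs}})$ into a quantity involving ${\cdet}_{\hat q}(M_{LL})^{-1}{\cdet}_{\hat q}(M_{L\setminus J_{rs},\,L\setminus I_{rs}})$, and then with respect to the full set $\{1,\dots,n\}$, which turns those back into ${\cdet}_{\hat q}(M)^{-1}{\cdet}_{\hat q}(M_{(K\cup I_{rs})(K\cup J_{rs})})$ together with a ratio ${\cdet}_{\hat q}(M_{KK})^{-1}{\cdet}_{\hat q}(M)$. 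The heart of the paper's argument is then a purely combinatorial cancellation of all the accumulated $\varepsilon$-prefactors, after which only the desired expression remains. Your route, by contrast, builds the Schur complement $\widetilde M=D-CA^{-1}B$ directly, proves it is Manin and satisfies the quotient formula (ii) by induction on $|K|$, and then instantiates the hypothesized identity at $\widetilde M$. What your approach buys is a structural explanation --- the mysterious ratio ${\cdet}_{\hat q}(M_{KK})^{-1}{\cdet}_{\hat q}(M_{(K\cup I)(K\cup J)})$ is literally a minor of a single Manin matrix --- and as a byproduct you obtain Sylvester's theorem before Muir's law rather than after it (the paper derives Sylvester from Muir). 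What the paper's approach buys is brevity: once Jacobi's ratio theorem and Cayley's complementary identity are in hand, Muir's law follows by two invocations plus $\varepsilon$-bookkeeping, with no need to verify the Manin relations for $\widetilde M$ or to set up the inductive Haynsworth machinery. Your sketch is honest about where the real work lies (the noncommutative column expansion and the direct check that the one-step Schur complement is Manin), and those steps do go through for $(\hat q,\hat p)$-Manin matrices, though you should expect nontrivial $q$-factors when pushing $M_{11}^{-1}$ past column entries.
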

\begin{proof}
Applying Cayley's complementary identity respect to the set $L$, we get that
\begin{equation}\label{muir id}
\sum_{r=1}^{k} b_r'\prod_{s=1}^{m_r}
\frac{\varepsilon (\hat {q},L\setminus J _{rs}\oplus J_{rs}^{\tau})}
{\varepsilon (\hat {p},L \setminus  I_{rs}\oplus I_{rs}^{\tau})}
 {\cdet}_{\hat{q}}(M_{LL})^{-1}
 {\cdet}_{\hat{q}}(M_{L\setminus   J_{rs},L\setminus  I_{rs} })=0,
\end{equation}

Applying Cayley's
complementary identity respect to the set $\{1,\ldots,n\}$,
Equation \eqref{muir id} can be written as
\begin{equation}\label{Muir eq}
\begin{split}
\sum_{r=1}^{k} b_r\prod_{s=1}^{m_r}
f_{rs}g_{rs}h_{rs}=0
 \end{split}
\end{equation}
where
\begin{align*}
&f_{rs}=\frac{\varepsilon (\hat {p'},L\setminus J _{rs}\oplus J_{rs}^{\tau})}
{\varepsilon (\hat {q'},L \setminus  I_{rs}\oplus I_{rs}^{\tau})},\\
&g_{rs}=\frac{\varepsilon (\hat {p},K\oplus L^{\tau})}
{\varepsilon (\hat {q},K\oplus L^{\tau})}
{\cdet}_{\hat{q}}(M_{KK})^{-1}
{\cdet}_{\hat{q}}(M),\\
&h_{rs}=\frac{\varepsilon (\hat {q},(K\cup I_{rs})\oplus (L \setminus I _{rs})^{\tau})}
{\varepsilon (\hat {p},(K\cup J_{rs})\oplus (L\setminus J _{rs})^{\tau})}
{\cdet}_{\hat{q}}(M)^{-1}
 {\cdet}_{\hat{q}}(M_{  (K\cup I_{rs})(K\cup J_{rs}) }).
\end{align*}

For any increasing multi-indices $I$ and $K$, we have

\begin{equation}
\varepsilon (\hat {q}, I\oplus K^{\tau})=
\varepsilon (\hat {q}, I\oplus K )\varepsilon (\hat {q}, K^{\tau} ).
\end{equation}
Moreover, if $I\subset K$, then
\begin{equation}
\varepsilon (\hat {q}, K^{\tau})=
\varepsilon (\hat {q}, I^{\tau})\varepsilon (\hat {q}, (K\setminus I )^{\tau})
\varepsilon (\hat {q},I\oplus K\setminus I )
\varepsilon (\hat {q},  K\setminus I \oplus I).
\end{equation}

Therefore,
\begin{equation*}
\begin{split}
&\frac{\varepsilon (\hat {p'},L\setminus J _{rs}\oplus J_{rs}^{\tau})}
{\varepsilon (\hat {q'},L \setminus  I_{rs}\oplus I_{rs}^{\tau})}
\frac{\varepsilon (\hat {p},K\oplus L^{\tau})}
{\varepsilon (\hat {q},K\oplus L^{\tau})}
\frac{\varepsilon (\hat {q},(K\oplus I_{rs})\oplus (L \setminus I _{rs})^{\tau})}
{\varepsilon (\hat {p},(K\oplus J_{rs})\oplus (L \setminus J _{rs})^{\tau})}\\
&=\frac
{\varepsilon (\hat {q},L \setminus  I_{rs}\oplus I_{rs}^{\tau})}
{\varepsilon (\hat {p},L\setminus J _{rs}\oplus J_{rs}^{\tau})}
\frac{\varepsilon (\hat {p},  L^{\tau})}
{\varepsilon (\hat {q}, L^{\tau})}
\frac{\varepsilon (\hat {q},(  I_{rs})\oplus (L \setminus I_{rs})^{\tau})}
{\varepsilon (\hat {p},(  J_{rs})\oplus (L \setminus J _{rs})^{\tau})}\\
&=1,
 \end{split}
\end{equation*}
and
\begin{align*}
f_{rs}g_{rs}h_{rs}=
{\cdet}_{\hat{q}}(M_{KK})^{-1}
 {\cdet}_{\hat{q}}(M_{  (K\cup I_{rs})(K\cup J_{rs}) }).
\end{align*}
 This completes the proof.
\end{proof}

\begin{theorem}[Sylvester's Theorem]
Let $M$ be an  $n\times n$ $(\hat{q},\hat{p})$-Manin matrix and $K=\{1,2,\ldots,m\}$, $L=\{m+1,m+2,\ldots,n\}$,where $m<n$. Assume that $M$, $M_{KK}$, $M_{LL}$,  ${\cdet}_{\hat{q}}(M)$, ${\cdet}_{\hat{q}}(M_{KK})$ and ${\cdet}_{\hat{q}}(M_{LL})$ are invertible.
Let $B$ be a $m\times m$ matrix with
$$B_{rs}={\cdet}_{\hat{q}}(M_{LL})^{-1}
 {\cdet}_{\hat{q}}(M_{  (i_{r}\oplus L)(  i_{s} \oplus L)}).$$
  Then $B$ is a  $(\hat{q},\hat{p})$-Manin matrix.
Moreover,
\begin{equation}
{\cdet}_{\hat{q}}(B)={\cdet}_{\hat{q}}(M_{LL})^{-1}{\cdet}_{\hat{q}}(M).
\end{equation}
\end{theorem}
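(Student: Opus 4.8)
The plan is to derive both assertions directly from Muir's law (Theorem~\ref{muir law}), used with $K$ rather than $\{1,\dots,m\}$ as the distinguished pivot block; the argument proving Theorem~\ref{muir law} works verbatim for any splitting $\{1,\dots,n\}=K\sqcup\{1,\dots,m\}$, so this is legitimate. The point to isolate is that for $a,b\in\{1,\dots,m\}$ the entry $M_{ab}$ is exactly the $1\times1$ minor $\cdet_{\hat q}(M_{(a)(b)})$, and the ``Muir transform'' of Theorem~\ref{muir law} sends it to $\cdet_{\hat q}(M_{KK})^{-1}\cdet_{\hat q}(M_{(a\oplus K)(b\oplus K)})=B_{ab}$. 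More precisely: any ordered polynomial identity $\sum_r b_r\prod_s \cdet_{\hat q}(M_{I_{rs}J_{rs}})=0$ in which each $I_{rs}$ is increasing, each $J_{rs}$ has distinct entries, all $I_{rs},J_{rs}\subset\{1,\dots,m\}$, and each $b_r\in\mathbb C(q_{ij},p_{ij})$, is carried by Muir's law --- term by term, and with the order of the products preserved --- to the identity obtained by replacing each factor $\cdet_{\hat q}(M_{I_{rs}J_{rs}})$ with $\cdet_{\hat q}(M_{KK})^{-1}\cdet_{\hat q}(M_{(I_{rs}\oplus K)(J_{rs}\oplus K)})$; when the index sets are singletons this is literally the substitution $M_{ab}\mapsto B_{ab}$, with all coefficients and all multiplication orders unchanged.

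To see that $B$ is a $(\hat q,\hat p)$-Manin matrix I would feed to Muir's law the two defining relations \eqref{rel manin matrix} of $M$, restricted to row and column indices lying in $\{1,\dots,m\}$: $M_{ik}M_{jk}-q_{ji}M_{jk}M_{ik}=0$ for $i<j\le m$ and $k\le m$, and $M_{ik}M_{jl}-q_{ji}p_{kl}M_{jl}M_{ik}+p_{kl}M_{il}M_{jk}-q_{ji}M_{jk}M_{il}=0$ for $i<j\le m$ and $k<l\le m$. Each summand here is a product of two $1\times1$ minors and the coefficients $1,\ -q_{ji},\ -q_{ji}p_{kl},\ p_{kl}$ lie in $\mathbb C(q_{ij},p_{ij})$, so these are admissible inputs. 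Applying Muir's law substitutes $B_{ab}$ for every $M_{ab}$ while leaving the coefficients and the left-to-right order of the products intact, so the two transformed identities are exactly the defining relations \eqref{rel manin matrix} for the $m\times m$ matrix $B$; hence $B$ is a $(\hat q,\hat p)$-Manin matrix.

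For the determinant identity I would apply Muir's law to the tautology
\[
\cdet_{\hat q}(M_{II})-\sum_{\sigma\in S_m}\varepsilon(\hat q,\sigma)\,M_{\sigma(1)1}M_{\sigma(2)2}\cdots M_{\sigma(m)m}=0,\qquad I=(1<2<\cdots<m),
\]
which has the shape required above (one ``large'' minor together with $1\times1$ minors carrying $\varepsilon$-monomial coefficients). Since $I\oplus K=(1,2,\dots,n)$, Muir's law turns $\cdet_{\hat q}(M_{II})$ into $\cdet_{\hat q}(M_{KK})^{-1}\cdet_{\hat q}(M)$ and each $M_{\sigma(t)t}$ into $B_{\sigma(t)t}$, so the transformed identity reads $\cdet_{\hat q}(M_{KK})^{-1}\cdet_{\hat q}(M)=\sum_{\sigma\in S_m}\varepsilon(\hat q,\sigma)B_{\sigma(1)1}\cdots B_{\sigma(m)m}=\cdet_{\hat q}(B)$, which is the asserted formula. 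The only genuinely delicate points are bookkeeping: confirming that Theorem~\ref{muir law} may be invoked with $K$ as the pivot block (the symmetry of its proof), and checking that the Manin relations and the column-determinant expansion really are admissible minor identities in the required sense --- which they are, since singleton index sets are automatically increasing and repeat-free, products stay ordered, and the coefficients that appear are monomials in the parameters. I expect no step beyond these to present any obstacle.
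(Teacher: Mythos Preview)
Your proposal is correct and follows essentially the same route as the paper: both invoke Muir's law first on the defining relations \eqref{rel manin matrix} (viewed as identities among $1\times 1$ minors) to conclude that $B$ is a $(\hat q,\hat p)$-Manin matrix, and then on the column-determinant expansion to obtain $\cdet_{\hat q}(B)=\cdet_{\hat q}(M_{KK})^{-1}\cdet_{\hat q}(M)$. Your write-up is in fact more explicit than the paper's terse version, in particular in flagging that Muir's law must be invoked with the roles of $\{1,\dots,m\}$ and $K$ swapped and that the proof of Theorem~\ref{muir law} is symmetric in this respect.
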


\begin{proof}

It follows from Muir's Law that $B$ is a  $(\hat{q},\hat{p})$-Manin matrix.
Applying Muir's law to the equation
\begin{equation}
\begin{split}
{\cdet}_{\hat{q}}(M_{KK})
&=\sum_{\sigma\in S_m} \varepsilon({\hat{q}}, \sigma)
M_{{\sigma(1)},1}\cdots M_{{\sigma(m)},m},
\end{split}
\end{equation}
we get that
\begin{equation}
{\cdet}_{\hat{q}}(B)={\cdet}_{\hat{q}}(M_{LL})^{-1}{\cdet}_{\hat{q}}(M).
\end{equation}
\end{proof}

\section{Capelli-type identities }\label{s:gen-CauBin formulas}
The Capelli identity is a celebrated result in the classical invariant theory
that provides a set of generators for the center of enveloping algebra $U(\mathfrak{gl}(n))$.
Let $E=(E_{ij})$, where $E_{ij} (1\leq i,j\leq n)$ are the basis elements of $\mathfrak{gl}(n)$, then
\begin{equation*}
\begin{aligned}
&\cdet(E+diag(n-1,\ldots,0))
\\
&=\sum_{\sigma\in S_n}sgn(\sigma)
(E_{\sigma(1)1}+(n-1)\delta_{\sigma(1)1})(E_{\sigma(2)2}+(n-2)\delta_{\sigma(2)2})\cdots E_{\sigma(n)n}
\end{aligned}
\end{equation*}
is a central element of $U(\mathfrak{gl}(n))$.
Let $\mathcal{PD}(\mathbb{C}^{n\times n})$ be the algebra generated by $x_{ij}$ and $\partial_{ij}$, $1\leq i,j\leq n$.
Denote $X=(x_{ij})$ and $D=(\partial_{ij})$, then
\begin{equation*}
E \mapsto X D^t
\end{equation*}
defines an algebra homomorphism from $U(\mathfrak{gl}(n))$ to $\mathcal{PD}(\mathbb{C}^{n\times n})$.
The image of the central element was given by Capelli \cite{C} as follows.
\begin{equation*}
\det(X D^t+diag(n-1,\ldots,0))=\det X\det D.
\end{equation*}

There have been many generalizations of the Capelli identities.
Turnbull \cite{Turnbull} considered the Capelli identity for determinant of symmetric matrices and permanent of antisymmetric matrices.
Howe and Umeda \cite{HU}, Kostant and Sahi \cite{KS} proved the Capelli identities for determinants in the antisymmetric case.
The generalization of Capelli identities to immanants was discovered by Willamson \cite{Willamson} and Okounkov \cite{Ok}.
In \cite{CSS,CFR}, the Capelli identities were generalized to Manin matrices.  The quantum analogue of the Capelli identity was given in \cite{NUW} and was
further generalized to the multiparametric quantum case in \cite{JZ2}.

In this section we study the Capelli-type identities for determinants and permanents of $(\hat q, \hat p)$-$Manin$ matrices such that all entries of $\hat q$ or $\hat p$ equal to 1.
%{\color{red}which include all aforementioned situations as special cases.}
In this case, we call the matrix  $(1, \hat p)$-$Manin$ matrix and  $(\hat q, 1)$-$Manin$ matrix respectively.
Define  $$P_{s \times m}=\sum_{i=1}^m \sum_{j=1}^s E_{ij}\otimes E_{ji} \in \mathrm{Hom}(\mathbb{C}^{s}\otimes \mathbb{C}^{m}, \mathbb{C}^{m}\otimes \mathbb{C}^{s}).$$
 Then $P_{m \times s} P_{s \times m}=1 \in \mathrm{End}(\mathbb{C}^{s}\otimes \mathbb{C}^{m})$.
For $1\leq a<b\leq r$, we denote
\begin{equation}
 P_{s \times m}^{(a,b)}=\sum_{i=1}^m \sum_{j=1}^s 1^{\otimes (a-1)}
\otimes E_{ij} \otimes 1^{\otimes (b-a-1)}\otimes E_{ji}\otimes 1^{\otimes (r-b)}.
\end{equation}

The following proposition is the Capelli-type identity for determinant of $(\hat{q},1)$-$Manin$ matrix.

\begin{proposition}\label{det capelli}
		Let $M$ be an $n\times m$   $(\hat{q},1)$-$Manin$ matrix and $N$ an $m \times s$ matrix and suppose that
		\begin{equation}\label{Capelli_rela1}
	 M_2N_1-N_1M_2=-H_2  P_{s \times m},
		\end{equation}
where $H$ is an $n\times s$ matrix.
In terms of entries this relation can be written as
	\begin{equation}\label{Capelli_rela2}
		 [M_{ij},N_{kl}]=-\delta_{jk}h_{il}, \quad \quad 1\leq i \leq n,1\leq j,k \leq m , 1\leq l \leq s.
	\end{equation}
Let $I=(1\leq i_1 < i_2 < \cdots <i_r \leq n)$ and $K=(k_1,\cdots,k_r)$ be two multi-indices. Then
\begin{equation}
	 	{\cdet}_{\hat{q}}\big((MN)_{IK}+H_{IK} \diag(r-1,r-2,... ,1,0) \big) \\
=\sum\limits_{J} {\cdet}_{\hat{q}}(M_{IJ}){\cdet}(N_{JK})
 \end{equation}
for $r \leq m$, where the sum is taken over all multi-indices of increasing integers $J \subset (1,\dots,m)$. In particular, if $m=n=s$, then
\begin{equation}
		{\cdet}_{\hat{q}}\big(MN+H\diag(n-1,n-2,... ,1,0)\big)={\cdet}_{\hat{q}}(M){\cdet}(N).
\end{equation}
\end{proposition}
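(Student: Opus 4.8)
The plan is to mimic the R-matrix proof of the Cauchy–Binet formula (Proposition \ref{det cauchy-Binet}) but with the auxiliary exterior variables carrying an extra "shift" coming from the commutation relation \eqref{Capelli_rela2}. Introduce the Grassmann-type variables $\psi_1,\dots,\psi_n$ generating $\Xi_{A_{\hat q}}(\mathfrak R)$ (so $\psi_i^2=0$, $\psi_j\psi_i=-q_{ji}\psi_i\psi_j$), and since $\hat p=1$ the companion variables $x_1,\dots,x_m$ are ordinary anticommuting variables. As in the proof of Proposition \ref{det cauchy-Binet}, set $\phi_k=\sum_i\psi_iM_{ik}$ and $\xi_l=\sum_k\phi_kN_{kl}=\sum_{j}\psi_j(MN)_{jl}$. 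The key computation is to evaluate the ordered product $\xi_{k_1}\cdots\xi_{k_r}$ in two ways. Moving each $N_{k_t l_t}$ past the $\phi$'s and $\psi$'s to the right produces, besides the naive term $\sum\phi_{j_1}\cdots\phi_{j_r}N_{j_1k_1}\cdots N_{j_rk_r}$ that gives $\sum_J{\cdet}_{\hat q}(M_{IJ})\cdet(N_{JK})$, correction terms governed by $[M_{ij},N_{kl}]=-\delta_{jk}h_{il}$; the combinatorics of how many such corrections arise at the $t$-th slot is exactly what manufactures the classical shift $\diag(r-1,\dots,1,0)$ on the diagonal.

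Concretely, first I would record the elementary identities $[\phi_k,N_{lj}]=-\delta_{kl}\sum_i\psi_i h_{ij}$ (from \eqref{Capelli_rela2}) and the fact that the $H_{il}$ and $\psi$'s satisfy the same kind of relations needed to keep the bookkeeping intact; here one uses that $M$ is a $(\hat q,1)$-Manin matrix so that $\phi_i^2=0$ and $\phi_j\phi_i=-\phi_i\phi_j$ (the $\hat p=1$ specialization of $(\Phi\otimes\Phi)S_{\hat p}=0$), allowing us to freely reorder. Then I would prove by induction on $r$ the master formula
\begin{equation*}
\xi_{k_r}\xi_{k_{r-1}}\cdots\xi_{k_1}
=\sum_{l_1,\dots,l_r}\phi_{l_r}\cdots\phi_{l_1}\,
\big((N)_{l_rk_r}+\cdots\big)\cdots
=\sum_J \Big(\sum_{i}\psi_i(\cdots)\Big),
\end{equation*}
more precisely showing that the left side equals $\sum_{1\le i_1<\cdots<i_r\le n}\psi_{i_1}\cdots\psi_{i_r}\,{\cdet}_{\hat q}\big((MN)_{IK}+H_{IK}\diag(r-1,\dots,1,0)\big)$ by collecting the shift contributions slot by slot, while independently it equals $\sum_{I,J}\psi_{i_1}\cdots\psi_{i_r}\,{\cdet}_{\hat q}(M_{IJ})\cdet(N_{JK})$ by first summing over $J$ using $\phi_{j_1}\cdots\phi_{j_r}=\cdet_{\hat q}(M_{IJ})$-type expansions from Section \ref{s:manin}. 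Comparing coefficients of $\psi_{i_1}\cdots\psi_{i_r}$ for each increasing $I$ gives the stated identity; the case $r>m$ is handled as in Proposition \ref{det cauchy-Binet} since then $\xi_{k_1}\cdots\xi_{k_r}$ expands over at most $m$ distinct $\phi$-indices and vanishes, and one checks the shift terms also vanish. The $m=n=s$ case is the specialization $I=K=(1,\dots,n)$.

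The main obstacle I expect is the inductive step producing the precise integer shifts: one must show that when $\xi_{k_r}$ is pushed to the right past the already-expanded product of $r-1$ factors, the commutator $[M,N]=-H P_{s\times m}$ contributes a correction that, after using $\phi_j^2=0$ to kill repeated indices and reorganizing, amounts to adding exactly $(r-1)$ copies of the $H$-term relative to the innermost factor — equivalently that the recursion ${\cdet}_{\hat q}\big((MN)_{IK}+H_{IK}\diag(r-1,\dots,0)\big)$ unfolds along its first column into ${\cdet}_{\hat q}$ of the $(r-1)\times(r-1)$ analogue with shift $\diag(r-2,\dots,0)$ plus the $M\cdot N$ genuine term. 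This is the same mechanism as in the classical Capelli identity and in \cite{CFR,JZ2}, but it has to be carried out with the $\hat q$-signs $\varepsilon(\hat q,I,\sigma)$ tracked carefully; the Manin relations \eqref{rel manin matrix} (with $\hat p=1$) are exactly what guarantees that the naive-looking column Laplace expansion of $\cdet_{\hat q}$ is compatible with the noncommutativity, so no further hypotheses are needed. Everything else — the vanishing for $r>m$, the passage to $N$ being an ordinary (commuting-with-$M$-up-to-the-prescribed-commutator) matrix, and the final coefficient comparison — is routine given the formalism already set up in Section \ref{s:manin}.
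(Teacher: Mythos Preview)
Your Grassmann-variable strategy has a genuine gap in the way you have set up the ``two expansions.'' Since the auxiliary generators $\psi_i$ commute with all elements of $\mathfrak R$ (including the entries of $MN$), the direct expansion of $\xi_{k_1}\cdots\xi_{k_r}$ gives
\[
\xi_{k_1}\cdots\xi_{k_r}=\sum_{I}\psi_{i_1}\cdots\psi_{i_r}\,{\cdet}_{\hat q}\big((MN)_{IK}\big),
\]
that is, the \emph{unshifted} column determinant, not the shifted one. So you cannot get the shifted determinant on one side and the Cauchy--Binet sum on the other by computing the same product two ways; the two quantities you want to equate are genuinely different, and the discrepancy is precisely the content of the proposition. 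What the $\phi$--$N$ commutators actually produce, after moving all $N$'s to the right, is the Cauchy--Binet sum \emph{plus} correction terms involving $\phi'_l=\sum_i\psi_i h_{il}$, and the real work is to identify those corrections with ${\cdet}_{\hat q}((MN)_{IK})-{\cdet}_{\hat q}((MN+H\diag)_{IK})$. To carry that out you need the anticommutation $\phi_i\phi'_j+\phi'_j\phi_i=0$, which does not follow from the Manin relations for $M$ alone: it is derived from the hypothesis \eqref{Capelli_rela1} by writing $\phi'_j=[N_{jj},\phi_j]$ and using $\phi_i\phi_j=-\phi_j\phi_i$. You gesture at ``the $H_{il}$ and $\psi$'s satisfy the same kind of relations,'' but this is the crux and must be proved.

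The paper avoids this ambiguity by working from the start with the operator $A_{\hat q}^{(r)}(MN+(r-1)H)_1\cdots(MN)_r$ in $\mathrm{End}(\mathbb C^n)^{\otimes r}$---the shifts are inserted by hand---and proving by induction on $r$ that it equals $A_{\hat q}^{(r)}M_1\cdots M_r N_1\cdots N_r$. The first step is exactly the lemma you are missing, stated as the operator identity $(1-P_{\hat q})(M_1H_2+H_1M_2P_{m\times s})=0$, proved via the $\phi_i\phi'_j$ computation above. The inductive step then peels off one factor, uses \eqref{Capelli_rela1} to commute $N_1$ past $M_2\cdots M_r$, and shows each of the $r-1$ resulting correction terms equals $-A_{\hat q}^{(r)}H_1M_2\cdots M_r N_2\cdots N_r$ using that anticommutation lemma; this exactly cancels the prescribed shift $(r-1)H_1$. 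Your approach can be repaired along the same lines---define $\tilde\xi^{(c)}_{k}=\xi_k+c\,\phi'_k$ and study $\tilde\xi^{(r-1)}_{k_1}\cdots\tilde\xi^{(0)}_{k_r}$---but then it becomes the Grassmann-variable translation of the paper's antisymmetrizer argument, and the same key lemma is indispensable.
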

\begin{proof}
It follows from relation
	\eqref{Capelli_rela1} that
	\begin{equation}\label{Relation M H}
		(1-P_{\hat{q}})(M_1H_2+H_1M_2 P_{m \times s}) =0.
	\end{equation}
Indeed, let $\Phi=\Psi M$ and $\Phi'=\Psi H$. Relation \eqref{Relation M H} is equivalent to $\phi_{i}\phi_{j}'=-\phi_{j}'\phi_{i}$ for $1\leq i\leq m$, $1\leq j\leq s$.
By relation \eqref{Capelli_rela1} we deduce that $[N_{kl},\phi_{j}]=\delta_{jk}\phi_{l}'$.
Then $\phi_{j}'=[N_{jj},\phi_{j}]$. Thus we have that
\begin{equation*}
			\begin{aligned}
	\phi_{i}\phi_{j}'+ \phi_{j}'\phi_{i} &= \phi_{i}[N_{jj},\phi_{j}]+[N_{jj},\phi_{j}]\phi_{i} \\
	&=\phi_{i}N_{jj}\phi_{j}-\phi_{i}\phi_{j}N_{jj}+N_{jj}\phi_{j}\phi_{i}-\phi_{j}N_{jj}\phi_{i}\\
	&=N_{jj}\phi_{i}\phi_{j}+\phi_{j}\phi_{i}N_{jj}-N_{jj}\phi_{i}\phi_{j}-\phi_{j}\phi_{i}N_{jj}\\
	&=0.
    \end{aligned}
\end{equation*}
For $i=j$,  one has that $\phi_{i}^2=0$. Therefore,
\begin{equation}
\phi_{i}\phi_{i}'+ \phi_{i}'\phi_{i}=\phi_{i}[N_{ii},\phi_{i}]+[N_{ii},\phi_i]\phi_{i}=0.
\end{equation}

The coefficient of $e_{i_1}\otimes\cdots \otimes e_{i_r}$ in
\begin{equation}
A_{\hat{q}}^{(r)}(MN+(r-1)H)_1\cdots (MN)_r e_{k_1}\otimes\cdots \otimes e_{k_r}
\end{equation}
is $\dfrac{1}{r!}{\cdet}_{\hat{q}}((MN)_{IK}+H_{IK}\diag(r-1,r-2,... ,1,0))$. And the coefficient of $e_{i_1}\otimes\cdots \otimes e_{i_r}$ in
\begin{equation}
	A_{\hat{q}}^{(r)}M_1\cdots M_r N_1\cdots N_r e_{k_1}\otimes\cdots \otimes e_{k_r}
\end{equation}
is 	 $\dfrac{1}{r!}\sum\limits_{J} {\cdet}_{\hat{q}}(M_{IJ}){\cdet}(N_{JK})$
, where the sum is over all multi-indices of increasing integers $J \subset (1,\dots,m)$ and $|J|=r$  .

Therefore, it is sufficient to show that
\begin{equation}\label{eq capelli}
	A_{\hat{q}}^{(r)}(MN+(r-1)H)_1\cdots (MN)_r =A_{\hat{q}}^{(r)}M_1\cdots M_r N_1\cdots N_r.
\end{equation}

We prove equation \eqref{eq capelli} by induction on r.
It is obvious for $r=1$. Let
$A_{\hat{q}}^{(r-1)}$  be the $\hat q$-antisymmetrizer on the indices $\{2,\ldots,r\}$. Using the relation $A_{\hat{q}}^{(r)}=A_{\hat{q}}^{(r)}A_{\hat{q}}^{(r-1)}$, we have that
\begin{equation}
	\begin{aligned}
        &A_{\hat{q}}^{(r)}(MN+(r-1)H)_1\cdots (MN)_r \\
        &=A_{\hat{q}}^{(r)}(MN+(r-1)H)_1 A_{\hat{q}}^{(r-1)}(MN+(r-2)H)_2\cdots (MN)_r.
	\end{aligned}
\end{equation}
By induction hypothesis, we have that
\begin{equation}
	\begin{aligned}
	&A_{\hat{q}}^{(r-1)}(MN+(r-2)H)_2\cdots (MN)_r
=A_{\hat{q}}^{(r-1)}M_2\cdots M_r N_2\cdots N_r.
	\end{aligned}
\end{equation}
%Therefore  $A_{\hat{q}}^{(r)}(MN+(r-1)H)_1\cdots (MN)_r$ can be written as
%\begin{equation}
%	\begin{aligned}
%		&A_{\hat{q}}^{(r)}M_1N_1M_2\ldots M_r N_2\cdots N_r
%		+A_{\hat{q}}^{(r)}H_1 M_2\cdots M_r N_2\cdots N_r\\
%	\end{aligned}
%\end{equation}
Using  relations \eqref{Capelli_rela1}, we have
\begin{equation}
	\begin{aligned}
		&A_{\hat{q}}^{(r)}(MN+(r-1)H)_1\cdots (MN)_r \\
		&=A_{\hat{q}}^{(r)}M_1\cdots M_r N_1\cdots N_r\\
		&\quad +A_{\hat{q}}^{(r)}\sum_{i=2}  ^{r}
M_1 M_2 \cdots M_{i-1} H_{i} 	P_{s \times m }^{(1,i)}M_{i+1} \cdots M_{r}N_{2}\cdots N_{r} \\
		&\quad +(r-1)A_{\hat{q}}^{(r)} H_1 M_2\cdots M_r N_2\cdots N_r
	\end{aligned}
\end{equation}
We will prove equation \eqref{eq capelli} by showing that for any $2\leq i\leq r$,
\begin{equation}
	\begin{aligned}
		&A_{\hat{q}}^{(r)}
M_1 \cdots M_{i-1} H_{i} P_{s \times m }^{(1,i)}M_{i+1} \cdots M_{r} \\
	&= -A_{\hat{q}}^{(r)} H_1 M_2\cdots M_r  .
	\end{aligned}
\end{equation}

For any $2\leq i\leq r$,
$A_{\hat{q}}^{(r)} =A_{\hat{q}}^{(r)} \frac{1-P_{\hat{q}}^{(i-1,i)}}{2}$.
%{\color{red} Denote  $P_{\hat{p}_{s \times m}}^{(1,i)}=P_{\hat{p}_{s \times m \times m_{(i-2)}\times n_{(r-i)}}}^{(1,i)}$.}
By relation
\eqref{Relation M H}, we have that
\begin{equation}
 	\begin{aligned}
 		&A_{\hat{q}}^{(r)}
M_1 \cdots M_{i-1} H_{i}  P_{s \times m}^{(1,i)}M_{i+1} \cdots M_{r} \\
 		&=A_{\hat{q}}^{(r)}\frac{1-P_{\hat{q}}^{(i-1,i)}}
{2} M_1 \cdots M_{i-1} H_{i}	 P_{s \times m}^{(1,i)}M_{i+1} \cdots M_{r} \\
        &=-A_{\hat{q}}^{(r)}\frac{1-P_{\hat{q}}^{(i-1,i)}
}{ 2} M_1 \cdots H_{i-1} M_{i}  P_{m \times
s}^{(i-1,i)}P_{s \times m }^{(1,i)} M_{i+1} \cdots M_{r} \\
        &= -A_{\hat{q}}^{(r)}
M_1 \cdots H_{i-1} M_{i} P_{m \times s}^{(i-1,i)}P_{s \times m}^{(1,i)} M_{i+1} \cdots M_{r} \\
 		&=\cdots\cdots \\
 		&= (-1)^{i-1} A_{\hat{q}}^{(r)}  H_{1}M_2 \cdots
M_{i}	 P_{m \times s}^{(1,2)}P_{m \times s}^{(2,3)}\cdots P_{m \times s}^{(i-1,i)} P_{s \times m}^{(1,i)}	
M_{i+1}\cdots M_{r}. \\
 	\end{aligned}
 \end{equation}
In $\mathrm{End}(\mathbb{C}^{s}\ot {\mathbb{C}^{m}}^{\ot (i-1)}),$  we have
\begin{equation}
	\begin{aligned}
	P_{m \times s}^{(1,2)}P_{m \times s}^{(2,3)}\cdots P_{m \times s}^{(i-1,i)} P_{s \times m}^{(1,i)}=P_{m \times m}^{(2,3)}P_{m \times m}^{(3,4)}\cdots P_{m \times m}^{(i-1,i)}.
	\end{aligned}
\end{equation}
In the following, we denote $P_{m \times m}^{(i,j)}$ by $P^{(i,j)}$.
Thus,
 \begin{equation}
 	\begin{aligned}
 	&A_{\hat{q}}^{(r)}
M_1 \cdots M_{i-1} H_{i} P_{s \times m}^{(1,i)} M_{i+1} \cdots M_{r} \\
 		&= (-1)^{i-1} A_{\hat{q}}^{(r)}  H_{1}M_2
\cdots  M_{i}	 P^{(2,3)}P^{(3,4)}\cdots P^{(i-1,i)} M_{i+1}\cdots M_{r}\\
 		&= (-1)^{i-1} (-1)^{i-2} A_{\hat{q}}^{(r)}
H_{1}M_2 \cdots  M_{i} M_{i+1}\cdots M_{r}\\
 		&=- A_{\hat{q}}^{(r)}  H_{1}M_2 \cdots
M_{r}.\label{capplli-term3}
 	\end{aligned}
 \end{equation}

This completes the proof of the proposition.\end{proof}

%	\begin{corollary}
%		Let $M$ be an $n\times m$ $(\hat{q},\hat{p})$-$Manin$ matrix and $N$ an $m \times s$ matrix and suppose that
%		\begin{equation}
%			M_2N_1-N_1M_2=-hP_{\hat{p}}.
%		\end{equation}
%		In terms of entries this relation can be written as
%		\begin{equation}
%			[M_{ij},N_{kl}]=-\delta_{jk}\delta_{il}p_{lk}h, \quad \quad 1\leq i \leq n,1\leq j,k \leq m , 1\leq l \leq s.
%		\end{equation}
%		And $\psi_{t}$ commute with entries of $M$ and $N$.
%		
%		Let $I=(1\leq i_1 < i_2 < \cdots <i_r \leq n)$ and $K=(k_1,\cdots,k_r)$ be two multi-indices. Then
%		\begin{equation}
%			\mathrm{det}_{\hat{q}}\big((MN)_{IK}+h\diag(r-1,r-2,... ,1,0)\big) = \sum\limits_{J} \mathrm{det}_{\hat{q}}((M)_{IJ})\mathrm{det}_{\hat{p}}((N)_{JK})
%		\end{equation}
%		for $r \leq m$, where the sum is over all multi-indices of increasing integers $J \subset (1,\dots,m)$. In particular, if $m=n=s$, then
%		\begin{equation}
%			\mathrm{det}_{\hat{q}}\big(MN+h\diag(n-1,n-2,... ,1,0)\big)=\mathrm{det}_{\hat{q}}(M)\mathrm{det}_{\hat{p}}(N).
%		\end{equation}
%	\end{corollary}

We can also define the multi-parametric quantum   row  determinants and the $\hat{q}$-minor row  determinants. Let $I=(i_1,\cdots,i_r)$, $J=(j_1,\cdots,j_r)$ be two multi-indices.
We define the multi-parametric  $\hat{q}$-minor row determinant as follows:
\begin{equation}
	{\rdet}_{\hat{q}}(M_{IJ})=\sum \limits_{\sigma \in S_r}  \varepsilon(\hat{q},J,\si)   M_{i_1,j_{\si(1)}}\cdots M_{i_r,j_{\si(r)}}.
\end{equation}
For $m=n$ the row-determinant is defined as $n$-minor row determinant.

\begin{lemma}\label{transpose}
Let $N^{t}$ an $s \times m$ $(\hat{q},\hat{p})$-$Manin$ matrix, then
\begin{align} \label{N eq}
&S_{\hat{p'}} N_1 N_2 A_{\hat{q'}}=0,\\ \label{N eq 1}
&  N_1 \cdots N_r A_{\hat{q'}}^{(r)}=A_{\hat{p'}}^{(r)} N_1 \cdots N_r A_{\hat{q'}}^{(r)},\\ \label{N eq 2}
& S_{\hat{p'}}^{(r)} N_1 \cdots N_r   =S_{\hat{p'}}^{(r)} N_1 \cdots N_r S_{\hat{q'}}^{(r)}.
\end{align}
\end{lemma}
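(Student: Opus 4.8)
The plan is to obtain all three identities by transposing the corresponding statements for the Manin matrix $N^{t}$. Throughout, write $T\mapsto T^{t}$ for the transpose of an operator $T$ on ${\mathbb{C}^{\bullet}}^{\otimes r}$ with coefficients in $\mathfrak{R}$, i.e.\ the map that transposes each $\mathrm{End}(\mathbb{C}^{\bullet})$ tensor factor and leaves the $\mathfrak{R}$-coefficients in place. I will use three bookkeeping facts. (a) $P_{\hat{q}}^{t}=P_{\hat{q'}}$, which is immediate from $P_{\hat{q}}=\sum_{i,j}q_{ji}\,E_{ij}\otimes E_{ji}$ together with $q_{ji}=q_{ij}^{-1}$; hence $A_{\hat{q}}^{t}=A_{\hat{q'}}$, $S_{\hat{q}}^{t}=S_{\hat{q'}}$, and, since transposing a product $P_{\hat{q}}^{\sigma}$ of elementary transposition operators reverses it into $P_{\hat{q'}}^{\sigma^{-1}}$ while $\mathrm{sgn}(\sigma^{-1})=\mathrm{sgn}(\sigma)$, also $(A_{\hat{q}}^{(r)})^{t}=A_{\hat{q'}}^{(r)}$ and $(S_{\hat{q}}^{(r)})^{t}=S_{\hat{q'}}^{(r)}$; the same holds with $\hat{q}$ replaced by $\hat{p}$. (b) For a scalar operator $C$ and any $W$ over $\mathfrak{R}$ one has $(CW)^{t}=W^{t}C^{t}$ and $(WC)^{t}=C^{t}W^{t}$. (c) For any matrix $L$ over $\mathfrak{R}$, $(L_{1}L_{2}\cdots L_{r})^{t}=(L^{t})_{1}(L^{t})_{2}\cdots(L^{t})_{r}$, with \emph{no} reversal of the order of the factors, because $L_{1},\dots,L_{r}$ act on distinct tensor legs.

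Granting (a)--(c), the argument is purely formal. Since $N^{t}$ is a $(\hat{q},\hat{p})$-Manin matrix, $A_{\hat{q}}(N^{t})_{1}(N^{t})_{2}S_{\hat{p}}=0$ by \eqref{qp manin}; applying $\,{}^{t}$ and using (b), then (c) with $L=N^{t}$ (so $L^{t}=N$), then (a), the left side becomes $S_{\hat{p}}^{t}\,N_{1}N_{2}\,A_{\hat{q}}^{t}=S_{\hat{p'}}N_{1}N_{2}A_{\hat{q'}}$, which is \eqref{N eq}. Likewise, Proposition \ref{p:comm2} applied to $N^{t}$ gives $A_{\hat{q}}^{(r)}(N^{t})_{1}\cdots(N^{t})_{r}=A_{\hat{q}}^{(r)}(N^{t})_{1}\cdots(N^{t})_{r}A_{\hat{p}}^{(r)}$ and $(N^{t})_{1}\cdots(N^{t})_{r}S_{\hat{p}}^{(r)}=S_{\hat{q}}^{(r)}(N^{t})_{1}\cdots(N^{t})_{r}S_{\hat{p}}^{(r)}$; transposing these and using (a)--(c) turns them into $N_{1}\cdots N_{r}A_{\hat{q'}}^{(r)}=A_{\hat{p'}}^{(r)}N_{1}\cdots N_{r}A_{\hat{q'}}^{(r)}$ and $S_{\hat{p'}}^{(r)}N_{1}\cdots N_{r}=S_{\hat{p'}}^{(r)}N_{1}\cdots N_{r}S_{\hat{q'}}^{(r)}$, namely \eqref{N eq 1} and \eqref{N eq 2}. (Equation \eqref{N eq} is in any case the $r=2$ case of \eqref{N eq 1} after left multiplication by $S_{\hat{p'}}=1-A_{\hat{p'}}$, so it would suffice to establish the last two.)

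The one point that requires care is fact (c): unlike the familiar rule $(AB)^{t}=B^{t}A^{t}$ for operators on a single space, the transpose of $L_{1}\cdots L_{r}$ does not reverse the order of the factors. I would check this on matrix entries: for $r=2$, $(L_{1}L_{2})_{(ab),(cd)}=L_{ac}L_{bd}$, so $\big((L_{1}L_{2})^{t}\big)_{(ab),(cd)}=(L_{1}L_{2})_{(cd),(ab)}=L_{ca}L_{db}=(L^{t})_{ac}(L^{t})_{bd}=\big((L^{t})_{1}(L^{t})_{2}\big)_{(ab),(cd)}$, and the general $r$ follows by induction. The contrast with (b), where scalar operators sandwiching $W$ are reversed and transposed, is precisely what moves the (anti)symmetrizers from one side to the other and introduces the primes, so this interplay is the real content and everything else is substitution. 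Should one prefer to bypass the transpose bookkeeping entirely, an equivalent route is to rewrite the defining relations of $N^{t}$ in terms of $N_{ij}=(N^{t})_{ji}$ using \eqref{rel manin matrix}, verify by acting on the basis vectors $e_{i}\otimes e_{j}$ and comparing coefficients that they are equivalent to $S_{\hat{p'}}N_{1}N_{2}A_{\hat{q'}}=0$, and then deduce \eqref{N eq 1} and \eqref{N eq 2} from \eqref{N eq} by the adjacent-slot propagation argument used in the proof of Proposition \ref{p:comm2}.
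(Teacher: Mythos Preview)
Your proof is correct and follows essentially the same route as the paper: start from $A_{\hat{q}}(N^{t})_{1}(N^{t})_{2}S_{\hat{p}}=0$ and transpose to obtain \eqref{N eq}, then pass to the higher identities. The paper's write-up is terser—it simply says ``taking transpose'' and that \eqref{N eq 1}, \eqref{N eq 2} follow from \eqref{N eq}—whereas you spell out the bookkeeping facts (a)--(c) and, for \eqref{N eq 1} and \eqref{N eq 2}, prefer to transpose Proposition~\ref{p:comm2} applied to $N^{t}$ rather than re-run the propagation argument from \eqref{N eq}; your alternative route at the end is exactly the paper's. Your care with fact (c), that $(L_{1}\cdots L_{r})^{t}=(L^{t})_{1}\cdots(L^{t})_{r}$ without reversal because the legs are disjoint, is a genuine clarification the paper omits.
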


\begin{proof}

Since $N^{t}$ an $s \times m$ $(\hat{q},\hat{p})$-$Manin$ matrix,
then
$A_{\hat{q}} N^{t}_1 N^{t}_2 S_{\hat{p}}=0$. Taking transpose we obtain that
	\begin{equation}
		S_{\hat{p'}} N_1 N_2 A_{\hat{q'}}=0.
	\end{equation}
The Equations \eqref{N eq 1} and \eqref{N eq 2} follows from \eqref{N eq}.
\end{proof}

Here we have the analog  of Proposition \ref{det capelli}:
\begin{proposition}	\label{det capelli2}
Let $M$ be an $n\times m$  matrix and $N^{t}$ an $s \times m$ 	 $(\hat{q},1)$-$Manin$ matrix and suppose that
		\begin{equation} \label{rdet capp rela1}
		 M_2N_1-N_1M_2=-P_{n \times m}H_1.
		\end{equation}
In terms of entries this relation can be written as
\begin{equation}
			[M_{ij},N_{kl}]=-\delta_{jk}h_{il}, \quad \quad 1\leq i \leq n,1\leq j,k \leq m , 1\leq l \leq s.
\end{equation}
		Let $I=(1\leq i_1 < i_2 < \cdots <i_r \leq n)$ and $K=(k_1,\cdots,k_r)$ be two multi-indices. Then
		\begin{equation*}
		{\rdet}_{\hat{q}}\left((MN)_{IK}+\diag(0,1,... ,r-1)H_{IK}\right) = \sum\limits_{J} {\rdet}({M_{IJ}}){\rdet}_{\hat{q}}({N_{JK}}),
		\end{equation*}
		for $r \leq m$, where the sum is taken over all multi-indices of increasing integers $J \subset (1,\dots,m)$.
\end{proposition}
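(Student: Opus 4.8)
The plan is to carry out the argument of Proposition~\ref{det capelli} with ``left'' and ``right'' interchanged everywhere: column determinants become row determinants, the $\hat{q}$-antisymmetrizer $A_{\hat{q}}^{(r)}$ acting on the left is replaced by $A_{\hat{q'}}^{(r)}$ acting on the right, and Proposition~\ref{p:comm2} is replaced by Lemma~\ref{transpose}. The latter applies because $N^{t}$ is $(\hat{q},1)$-Manin, so in \eqref{N eq 1} we have $\hat{p'}=1$ and $A_{\hat{p'}}^{(r)}=A^{(r)}$, the ordinary antisymmetrizer on $(\mathbb{C}^{m})^{\otimes r}$. Everything will be extracted from the operator identity
\begin{equation}\label{e:rdet-op-id}
(MN)_1\,(MN+H)_2\cdots(MN+(r-1)H)_r\,A_{\hat{q'}}^{(r)}
= M_1\cdots M_r\,N_1\cdots N_r\,A_{\hat{q'}}^{(r)},
\end{equation}
where $(MN+(p-1)H)_p$ denotes the matrix $MN+(p-1)H$ placed in the $p$-th tensor slot. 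Applying both sides of \eqref{e:rdet-op-id} to $e_{k_1}\otimes\cdots\otimes e_{k_r}$ and reading off the coefficient of $e_{i_1}\otimes\cdots\otimes e_{i_r}$, exactly as in the proof of Proposition~\ref{det capelli}, turns the left-hand side into $\frac{1}{r!}\rdet_{\hat{q}}\bigl((MN)_{IK}+\diag(0,1,\dots,r-1)H_{IK}\bigr)$, while on the right-hand side one first rewrites $N_1\cdots N_r A_{\hat{q'}}^{(r)}=A^{(r)}N_1\cdots N_r A_{\hat{q'}}^{(r)}$ by Lemma~\ref{transpose}, so that the $M$-block and the $N$-block decouple, and then the same Cauchy--Binet contraction as in Proposition~\ref{det cauchy-Binet} yields $\frac{1}{r!}\sum_{J}\rdet(M_{IJ})\,\rdet_{\hat{q}}(N_{JK})$. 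Hence it is enough to prove \eqref{e:rdet-op-id}.

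I would prove \eqref{e:rdet-op-id}, which is the analogue of \eqref{eq capelli}, by induction on $r$; the case $r=1$ is trivial. Using $A_{\hat{q'}}^{(r)}=A_{\hat{q'}}^{(r-1)}A_{\hat{q'}}^{(r)}$ with $A_{\hat{q'}}^{(r-1)}$ acting on the slots $\{1,\dots,r-1\}$, I insert $A_{\hat{q'}}^{(r-1)}$ after the first $r-1$ factors on the left of \eqref{e:rdet-op-id}, apply the induction hypothesis to those slots, and use that $A_{\hat{q'}}^{(r-1)}$ commutes with the $r$-th tensor slot; this reduces \eqref{e:rdet-op-id} to
\begin{equation}\label{e:rdet-key}
M_1\cdots M_{r-1}\Bigl[N_1\cdots N_{r-1}(MN+(r-1)H)_r-M_r\,N_1\cdots N_r\Bigr]A_{\hat{q'}}^{(r)}=0.
\end{equation}
Writing $(MN)_r=M_rN_r$ and commuting $M_r$ leftward through $N_{r-1},\dots,N_1$ by the slot-$(j,r)$ version of \eqref{rdet capp rela1}, that is $N_jM_r-M_rN_j=P_{n\times m}^{(j,r)}H_j$, produces exactly $r-1$ surplus terms, so that \eqref{e:rdet-key} amounts to the assertion that for each $j\in\{1,\dots,r-1\}$
\begin{equation}\label{e:rdet-telescope}
M_1\cdots M_{r-1}\,N_1\cdots N_{j-1}\,P_{n\times m}^{(j,r)}H_j\,N_{j+1}\cdots N_r\,A_{\hat{q'}}^{(r)}
=-\,M_1\cdots M_{r-1}\,N_1\cdots N_{r-1}H_r\,A_{\hat{q'}}^{(r)};
\end{equation}
summing \eqref{e:rdet-telescope} over $j$ then cancels the $(r-1)H_r$ term.

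Identity \eqref{e:rdet-telescope} is the right-handed mirror of the telescoping step in the proof of Proposition~\ref{det capelli}: one repeatedly uses $A_{\hat{q'}}^{(r)}=A_{\hat{q'}}^{(r)}\frac{1-P_{\hat{q'}}^{(p,p+1)}}{2}$ together with the auxiliary quadratic relation between $H$ and $N$ described next to slide $H_j$ rightward across $N_{j+1},\dots,N_{r-1}$, after which the accumulated product of permutation operators collapses the same way $P_{m\times s}^{(1,2)}\cdots P_{m\times s}^{(i-1,i)}P_{s\times m}^{(1,i)}$ did there. The main obstacle I anticipate is establishing this auxiliary relation, the analogue of \eqref{Relation M H}: from \eqref{rdet capp rela1} together with the fact that $N^{t}$ is $(\hat{q},1)$-Manin one should obtain an identity of the shape $\bigl(H_1N_2+(\text{a }P\text{-conjugate of }N_1H_2)\bigr)(1-P_{\hat{q'}})=0$, equivalently that the Grassmann-type auxiliary variables attached to $H$ and to $N$ anticommute. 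I would derive it by imitating the computation $\phi_i\phi'_j+\phi'_j\phi_i=0$ in the proof of Proposition~\ref{det capelli}: relation~\eqref{rdet capp rela1} lets one express the relevant $H$-variable as a commutator of the form $[\,M_{ii},\,\cdot\,]$, whereupon the Manin relations for $N^{t}$ force the anticommutation, and substituting into \eqref{e:rdet-telescope} makes the two sides agree after sliding. What remains is bookkeeping: keeping the mixed tensor factors $\mathbb{C}^{n},\mathbb{C}^{m},\mathbb{C}^{s}$ and the operators $P_{n\times m},P_{m\times s},P_{\hat{q'}}^{(p,p+1)}$ straight, and checking that peeling the $r$-th slot (rather than the first, as in Proposition~\ref{det capelli}) yields the coefficients $0,1,\dots,r-1$, and hence $\diag(0,1,\dots,r-1)$ rather than $\diag(r-1,\dots,0)$. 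Alternatively, one may deduce the proposition by applying Proposition~\ref{det capelli} to the Manin matrix $N^{t}$ and the matrix $M^{t}$ and using $\cdet_{\hat{q}}\bigl((A^{t})_{IJ}\bigr)=\rdet_{\hat{q}}(A_{JI})$; this route works, but it interchanges the roles of the two multi-indices, so a little care with the index conventions is needed to recover the stated form.
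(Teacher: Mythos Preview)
Your proposal is correct and follows essentially the same route as the paper: the paper's proof consists precisely of recording the auxiliary relation $(P_{m\times n}N_1H_2+H_1N_2)(1-P_{\hat{q'}})=0$ (your ``$H_1N_2+(\text{a }P\text{-conjugate of }N_1H_2)$'' identity) and then invoking Lemma~\ref{transpose} together with the argument of Proposition~\ref{det capelli}, which is exactly the right-handed induction you spell out in \eqref{e:rdet-op-id}--\eqref{e:rdet-telescope}. Your write-up is in fact more detailed than the paper's, which leaves the inductive mirroring to the reader.
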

\begin{proof}
It follows from the relation
	\eqref{rdet capp rela1} that
\begin{equation}\label{rdet Relation M H}
	 ( P_{m \times n}N_1H_2 + H_1N_2 )(1-P_{\hat{q'}}) =0.
\end{equation}
Then the proposition can be proved by  Lemma \ref{transpose} and the arguments used in  Proposition \ref{det capelli}.
\end{proof}

\begin{remark}
If $p_{ij}=q_{kl}=1$ for all possible $i,j,k,l$, Proposition \ref{det capelli} and \ref{det capelli2} specialise to the Capelli-type identities for determinants of Manin matrix (\cite{CSS,CFR}).
\end{remark}

In the following, we will give the Capelli-type identities for permanents.
We define
\begin{equation*}
      \mu(\hat{p},\si)= \prod \limits_{\substack{i < j \\ \si(i)>\si(j)}} p_{\si(j),\si(i)}.
\end{equation*}
Similarly,
\begin{equation}
\mu(\hat{p},J,\si)=\prod_{\substack{s<t\\\si_s>\si_t}}
p_{j_{\sigma_{t}}j_{\sigma_s}} .
\end{equation}

	The multi-parametric quantum row permanent of a matrix $M$ is defined as:
	\begin{equation}
		\rper_{\hat{p}}(M)=\sum \limits_{\sigma \in S_n} \mu(\hat{p},\si)   M_{1,\si(1)}\cdots M_{n,\si(n)}.
	\end{equation}

Let $I=(i_1,i_2,\cdots,i_r)$ be any multi-index and $J=(j_1 \leq j_2 \leq \cdots \leq j_r)$ be a multi-index of non-decreasing positive integers. The $\hat{p}$-minor row permanent is defined as
\begin{equation}
	\rper_{\hat{p}}(M_{IJ})=\sum \limits_{\sigma \in S_n} \mu(\hat{p},J,\si)   M_{i_1,j_{\si(1)}}\cdots M_{i_r,j_{\si(n)}}.
\end{equation}
And define the map $\alpha_{J}: [r] \rightarrow [n]$ by $ \alpha_{J}(k)=j_k$ for $1 \leq k \leq r$. Let $v(\alpha_{J})=|\alpha^{-1}(1) |! |\alpha^{-1}(2)| ! \cdots | \alpha^{-1}(n) | !$. The normalized multi-parametric quantum permanents is defined as
\begin{equation}
	\widehat{\rper}_{\hat{p}}(M_{IJ})=\dfrac{1}{v(\alpha_{J})}\rper_{\hat{p}}(M_{IJ}).
\end{equation}

The following proposition is an analog of Cauchy-Binet's formula for permanents of multiparametric Manin matrices.

\begin{proposition}\label{CB permanent}
	Let $M$ be an $n\times m$ matrix and $N$ an $m \times s$ $(\hat{q},\hat{p})$-$Manin$  matrix such that the $N_{ij}$ commute with $M_{kl}$ and $x_{t}$ for all possible indices $i,j,k,l,t$. Let $I=(i_1, \cdots i_r )$ and $K=(k_1 \leq \cdots \leq k_r)$ be two multi-indices. Then
	\begin{equation}
		\widehat{\rper}_{\hat{p}}(MN)_{IK} = \sum\limits_{J} \widehat{\rper}_{\hat{q}}(M_{IJ})\widehat{\rper}_{\hat{p}}(N_{JK}),
	\end{equation}
 where the sum is taken over all multi-indices of non-decreasing integers $J \subset (1,\dots,m)$. In particular, if $m=n=s$, then
	\begin{equation}
	    \widehat{\rper}_{\hat{p}}(MN)=\sum\limits_{J} \widehat{\rper}_{\hat{q}}(M_{[n]J})\widehat{\rper}_{\hat{p}}(N_{J[n]}),
	\end{equation}
 where the sum is taken over all multi-indices of non-decreasing integers $J \subset (1,\dots,m)$.
\end{proposition}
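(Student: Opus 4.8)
The strategy mirrors the proof of the Cauchy–Binet formula for determinants (Proposition \ref{det cauchy-Binet}), but uses the symmetric (bosonic) quantum hyperplane $\mathfrak{X}_{A_{\hat p}}(\mathbb C)$ rather than the exterior algebra. Recall that $N^t$ being a $(\hat q,\hat p)$-Manin matrix means $N$ governs the transformation $z = Nx$ preserving the appropriate quantum plane: if $X=(x_1,\dots,x_s)^t$ with $x_jx_i=p_{ij}x_ix_j$ and we set $w_j = \sum_k x_k$-type combinations, the monomials $\phi_{j_1}\cdots\phi_{j_r}$ reorder with the $\mu(\hat p,\cdot)$ coefficients. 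Concretely, I would introduce the variables $w_k = \sum_{j=1}^m N_{jk}x_j$ (so that $\Psi$ is replaced by the bosonic $X$) and $u_i = \sum_{k=1}^s (MN)_{ik}\cdot(\text{free commuting bosons } t_k)$, the point being that because $N$ is a $(\hat q,\hat p)$-Manin matrix, the $w_k$ satisfy $w_l w_k = p_{kl} w_k w_l$ among themselves, and because $M_{kl}$ commute with the $x_t$, the product $(MN)$ inherits the correct structure. The proof is then a bookkeeping of how an ordered product $w_{k_1}\cdots w_{k_r}$ (with $K=(k_1\le\cdots\le k_r)$ non-decreasing) expands two different ways.

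First I would establish, exactly as in \eqref{CB1}–\eqref{CB2} but with symmetrizers in place of antisymmetrizers, the two expansions of $w_{k_1}\cdots w_{k_r}$. Expanding directly through $MN$ gives a sum over all functions from $[r]$ to $[m]$ of $(MN)$-monomials times ordered $x$-monomials; collecting the $x$-monomials into non-decreasing multi-indices $I'$ produces the coefficient $\widehat{\rper}_{\hat p}((MN)_{I'K})$ — here the normalization $1/v(\alpha_{K})$ appears precisely because a non-decreasing $K$ with repeated entries over-counts by $v(\alpha_K)$ when one symmetrizes, and likewise $1/v(\alpha_{I'})$ on the $x$-side. Expanding instead as $w_{k} = \sum_j x_j$ built from $N$ first and then $M$ second, and using that $N_{ij}$ commute with $M_{kl}$ and $x_t$, yields $\sum_{J}\widehat{\rper}_{\hat q}(M_{I'J})\,\widehat{\rper}_{\hat p}(N_{JK})$ summed over non-decreasing $J\subset(1,\dots,m)$. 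Comparing the coefficients of a fixed ordered bosonic monomial $x_{i_1}\cdots x_{i_r}$ (equivalently, of $\widehat{\rper}$-normalized monomials indexed by non-decreasing $I'$) gives the claimed identity; setting $m=n=s$ and $I=[n]$ gives the special case.

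The main obstacle, and the step needing the most care, is the normalization factors $v(\alpha_J)$: unlike the determinant case, where distinct multi-indices suffice and each monomial $\psi_{i_1}\cdots\psi_{i_r}$ has coefficient exactly one, here a non-decreasing multi-index with repetitions is hit multiple times under the symmetrizer, and the three combinatorial factors $v(\alpha_I)$, $v(\alpha_J)$, $v(\alpha_K)$ must be shown to balance across the identity. Concretely, one must verify that the multinomial over-counting coming from repeated indices in $I'$, $J$, $K$ factors compatibly through the composition $M\circ N$, i.e. that $\widehat{\rper}$ is the "right" normalization making the Cauchy–Binet sum close. I expect this to follow from a direct count: writing $\rper_{\hat p}((MN)_{I'K}) = \sum_{\alpha\colon[r]\to[m]} (\text{coeff}) \,\rper_{\hat q}(M_{I'\,\mathrm{im}\alpha})\rper_{\hat p}(N_{\mathrm{im}\alpha,K})$ and grouping $\alpha$ by its image multiset, the number of $\alpha$ with a given image multiset $J$ is $v(\alpha_J)^{-1} r!$ divided appropriately, which precisely cancels against $v(\alpha_J)$ in $\widehat{\rper}_{\hat q}((M)_{IJ})$. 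Once this counting lemma is in place the rest is the standard R-matrix/quantum-plane argument, and I would not belabor it.
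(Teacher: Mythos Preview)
Your overall strategy---use the bosonic quantum plane, expand a suitable product of auxiliary variables two ways, and compare coefficients of ordered monomials---is exactly the paper's approach. However, your specific setup has the direction reversed, and this is not a cosmetic issue. You propose to expand $w_{k_1}\cdots w_{k_r}$ with $w_k=\sum_{j=1}^m N_{jk}x_j$ (so $w=N^t x$), and you assert that ``because $N$ is a $(\hat q,\hat p)$-Manin matrix, the $w_k$ satisfy $w_lw_k=p_{kl}w_kw_l$.'' That claim is unjustified: the condition $A_{\hat q}N_1N_2S_{\hat p}=0$ translates via Proposition~\ref{p:comm} to the statement that $Y=NX$, with $X$ a \emph{column} of $\hat p$-bosons in $s$ variables, is a column of $\hat q$-bosons in $m$ variables; it says nothing about $N^t$ acting on a column of $m$ variables. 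The paper instead sets $x_1,\dots,x_s$ to be $\hat p$-bosons, defines $y_i=\sum_k N_{ik}x_k$ (so $y_jy_i=q_{ij}y_iy_j$ by the Manin property of $N$) and $\xi_i=\sum_k(MN)_{ik}x_k=\sum_j M_{ij}y_j$, and expands $\xi_{i_1}\cdots\xi_{i_r}$ indexed by the \emph{arbitrary row} multi-index $I$, comparing coefficients of the non-decreasing $x_{k_1}\cdots x_{k_r}$. Once you make this correction your argument becomes the paper's.

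Your second paragraph overstates the difficulty of the normalization factors. With the correct direction, the $\widehat{\rper}$'s appear automatically: the raw expansion $\xi_{i_1}\cdots\xi_{i_r}=\sum_{k_1,\ldots,k_r}(MN)_{i_1k_1}\cdots(MN)_{i_rk_r}\,x_{k_1}\cdots x_{k_r}$, after reordering each $x$-monomial to non-decreasing form using $x_lx_k=p_{kl}x_kx_l$, becomes $\sum_{K\ \text{nondecr.}}\widehat{\rper}_{\hat p}((MN)_{IK})\,x_{k_1}\cdots x_{k_r}$ directly from the definition of $\widehat{\rper}$ (the sum over $S_r$ in $\rper$ overcounts each distinct column-tuple exactly $v(\alpha_K)$ times, with identical $\mu$-weights since $p_{ii}=1$). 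The same mechanism handles the intermediate $J$-layer via the $\hat q$-relations among the $y_j$. No separate counting lemma is needed.
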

	\begin{proof}
		Let $y_i=\sum_{k=1}^{s}N_{ik}x_k$, $\xi_{i}= \sum_{k=1}^{s} (MN)_{ik}x_{k}$. Then
	\begin{equation}
		y_j y_i = q_{ij}y_i y_j  \quad 1 \leq i,j \leq m,
	\end{equation}
and
   \begin{equation}
	\xi_{i}= \sum \limits_{k=1}\limits^{s} (MN)_{ik}x_{k} = \sum \limits_{k=1}\limits^{s} \sum \limits_{j=1}\limits^{m} M_{ij}N_{jk} x_{k} = \sum \limits_{j=1}\limits^{m} M_{ij} y_{j}.
   \end{equation}
Therefore, we have that
\begin{equation}
   	\begin{aligned}
   	\xi_{i_{1}}\cdots \xi_{i_{r}}&= \sum \limits_{k_1,\dots,k_r=1}\limits^{s}(MN)_{i_1k_1}\cdots (MN)_{i_rk_r}x_{k_1}\cdots x_{k_r}\\
   	&=\sum \limits_{1\leq k_1\leq \dots\leq k_r\leq s}\widehat{\rper}_{\hat{p}}((MN)_{IK})x_{k_1}\cdots x_{k_r}.
   	\end{aligned}
\end{equation}

On the other hand,
\begin{equation}
	\begin{aligned}
		\xi_{i_{1}}\cdots \xi_{i_{r}}&= \sum \limits_{j_1,\dots,j_r=1}\limits^{m} M_{i_1j_1}\cdots M_{i_rj_r}y_{j_1}\cdots y_{j_r}\\
		&=\sum \limits_{1\leq j_1\leq \dots\leq j_r\leq m}\widehat{\rper}_{\hat{q}}(M_{IJ})y_{j_1}\cdots y_{j_r}\\
		&=\sum \limits_{\substack{1\leq j_1\leq \dots\leq j_r\leq m \\ 1\leq k_1\leq \dots\leq k_r\leq s}}\widehat{\rper}_{\hat{q}}(M_{IJ}) \widehat{\rper}_{\hat{p}}(N_{JK})x_{k_1}\cdots x_{k_r}.
	\end{aligned}
\end{equation}
Comparing the coefficients of $x_{k_1}\cdots x_{k_r}$, one has the proposition.
\end{proof}

The following proposition is an analogue of Capelli-type identity for permanents of $(1,\hat p)$-$Manin$ matrices.

\begin{proposition}\label{Capelli-type identity for permanent}
	Let $M$ be an $n\times m$ matrix and $N$ an $m \times s$  $(1,\hat{p})$-$Manin$  matrix such that
	\begin{equation}
	 M_2N_1-N_1M_2=-H_2P_{s\times m}.   \label{per capp rel1}
	\end{equation}
If $m=1$, suppose further that
\begin{equation}
(P_{m\times n}N_1H_2-H_1N_2)(1 + P_{\hat{p}}) = 0.
\end{equation}
%	In terms of entries this relation can be written as
%	\begin{equation}
%		[M_{ij},N_{kl}]=-\delta_{jk}q_{ij}h_{il}, \quad \quad 1\leq i \leq n,1\leq j,k \leq m , 1\leq l \leq s.
%	\end{equation}
Let $I=(i_1,\cdots , i_r)$  and $K=(1\leq k_1 \leq k_2 \leq \cdots \leq k_r \leq s)$ be two multi-indices. Then
	\begin{equation}
		\widehat{\rper}_{\hat{p}}((MN)_{IK}-diag(0,1,\dots,r-1)H_{IK} )= \sum\limits_{J} \widehat{\rper}(M_{IJ})\widehat{\rper}_{\hat{p}}(N_{JK}),\label{per cape 1}
	\end{equation}
	where the sum is taken over all multi-indices of non-decreasing integers $J \subset (1,\dots,m)$. In particular, if $m=n=s$, then
	\begin{equation}\label{e:Cap1}		
	\widehat{\rper}_{\hat{p}}(MN-diag(0,1,\dots,r-1)H )= \sum\limits_{J} 	\widehat{\rper}(M_{[n]J})\widehat{\rper}_{\hat{p}}(N_{J[n]}),
	\end{equation}
where the sum is taken over all multi-indices of non-decreasing integers $J \subset (1,\dots,m)$.
\end{proposition}
\begin{proof} For $m\geq 2$
it follows from \eqref{per capp rel1} that
	\begin{equation}
		( P_{m \times n}N_1H_2-H_1N_2)(1+P_{\hat{p}})=0.  \label{per capp rel2}
	\end{equation}

Indeed, let $Y=NX$ and $Y'=HX$. It is equivalent to prove $y_{i}y_{j}'=y_{j}'y_{i}$ for all $i,j$. Since $[y_k,M_{ij}]=\delta_{jk}y_{i}'$, so we have $y_{j}'=[y_{l},M_{jl}]$ for $l \neq i$, then
\begin{equation*}
	\begin{aligned}
		y_{i}y_{j}'-y_{j}'y_{i} &= [y_{i},[y_{l},M_{jl}]]\\
	&=[[y_{i},y_{l}],M_{jl}]+[y_{l},[y_{i},M_{jl}]]\\
		&=0.
	\end{aligned}
\end{equation*}

The coefficient of $e_{i_1}\otimes\cdots \otimes e_{i_r}$ in
\begin{equation}
	\begin{aligned}
		  (MN)_1\cdots (MN-(r-1)H)_r S_{\hat{p}}^{(r)}e_{k_1}\otimes\cdots \otimes e_{k_r}
	\end{aligned}
\end{equation}
is $ \frac{v(\alpha_{K})}{r!} \widehat{\rper}_{\hat{p}}((MN)_{IK}-diag(0,1,\dots,r-1) H_{IK})$. And the coefficient of $e_{i_1}\otimes\cdots \otimes e_{i_r}$ in
\begin{equation}
      M_1\cdots M_r N_1\cdots N_r S_{\hat{p}}^{(r)}e_{k_1}\otimes\cdots \otimes e_{k_r}
\end{equation}
is $ \frac{v(\alpha_{K})}{r!} \sum\limits_{J}  	\widehat{\rper}(M_{IJ})\widehat{\rper}_{\hat{p}}(N_{JK})$, where the sum is taken over all multi-indices of non-decreasing integers $J \subset (1,\dots,m)$ and $|J|=r$.
Therefore, it is sufficient to show that
\begin{equation}
	\begin{aligned}
	(MN)_1\cdots (MN-(r-1)H)_r S_{\hat{p}}^{(r)}  =M_1\cdots M_r N_1\cdots N_r S_{\hat{p}}^{(r)}  .\label{per cape 1'}
	\end{aligned}
\end{equation}
We will prove equation \eqref{per cape 1'} by induction on r.
It is obvious for $r=1$.
Let
$S_{\hat{p}}^{(r-1)}$  be the $\hat p$-symmetrizer on the indices $\{1,\ldots,r-1\}$. Using the relation $S_{\hat{p}}^{(r)}=S_{\hat{p}}^{(r-1)}S_{\hat{p}}^{(r)}$, we have that

\begin{equation}
	\begin{aligned}
		&(MN)_1\cdots (MN-(r-1)H)_r S_{\hat{p}}^{(r)} \\
		&=(MN)_1\cdots (MN-(r-2)H)_{r-1} S_{\hat{p}}^{(r-1)}   (MN-(r-1)H)_{r}S_{\hat{p}}^{(r)}.
	\end{aligned}
\end{equation}
By the induction hypothesis we have that
\begin{equation}
	\begin{aligned}
		&(MN)_1\cdots (MN-(r-2)H)_{r-1} S_{\hat{p}}^{(r-1)}=M_1\cdots M_{r-1} N_1\cdots N_{r-1} S_{\hat{p}}^{(r-1)}.
	\end{aligned}
\end{equation}

Applying relation \eqref{per capp rel1}, we have that
\begin{equation}
	\begin{aligned}
		&(MN)_1\cdots (MN-(r-1)H)_r S_{\hat{p}}^{(r)}  \\
		&=M_1\cdots M_{r-1} N_1\cdots N_{r-1}
(MN-(r-1)H)_r S_{\hat{p}}^{(r)}   \\
		&=M_1\cdots M_{r} N_1\cdots N_{r}
S_{\hat{p}}^{(r)} \\
		& \quad +\sum_{i=1}^{r-1}M_1\cdots
M_{r-1} N_1\cdots N_{i-1} P_{n \times m  }^{(i,r)} H_{i}N_{i+1}\cdots N_{r}S_{\hat{p}}^{(r)} \\
		&\quad -(r-1)M_1\cdots M_{r-1}
N_1\cdots N_{r-1}H_{r}S_{\hat{p}}^{(r)} .\label{per term1}\\
	\end{aligned}
\end{equation}

In the following we will show that
\begin{equation}
	\begin{aligned}
  N_1\cdots N_{i-1}	 P_{n \times m}^{(i,r)} H_{i}N_{i+1}\cdots N_{r}S_{\hat{p}}^{(r)}=
N_1\cdots N_{r-1}H_{r}S_{\hat{p}}^{(r)}.
	\end{aligned}
\end{equation}
for any $1\leq i\leq r-1.$

In  $\mathrm{End}({\mathbb{C}^{m}}^{\ot (r-i)} \ot \mathbb{C}^{n})$,  we have
	\begin{equation}\label{eq permutaion}
		\begin{aligned}
            P_{n \times
m}^{(i,r)}P_{m \times n}^{(i,i+1)}P_{m \times n}^{(i+1,i+2)}\cdots P_{m \times n}^{(r-1,r)}=
P_{m \times m}^{(i,i+1)}P_{m \times m}^{(i+1,i+2)}\cdots P_{m \times m}^{(r-2,r-1)}.
		\end{aligned}
	\end{equation}

By relations \eqref{per capp rel2} and \eqref{eq permutaion}
we have that
\begin{equation}
	\begin{aligned}
		&N_1\cdots N_{i-1}	 P_{n \times
m}^{(i,r)}H_{i}N_{i+1}\cdots N_{r} S_{\hat{p}}^{(r)}\\
		&= N_1\cdots N_{i-1}
	 P_{n \times
m}^{(i,r)}P_{m \times n}^{(i,i+1)}P_{m \times n}^{(i+1,i+2)}\cdots P_{m \times n}^{(r-1,r)} N_{i}\cdots N_{r-1}H_{r}S_{\hat{p}}^{(r)}\\
		&=  N_1\cdots
N_{i-1}	 P^{(i,i+1)}P^{(i+1,i+2)}\cdots P^{(r-2,r-1)} N_{i}\cdots N_{r-1}H_{r}S_{\hat{p}}^{(r)}\\
		&= N_1\cdots
N_{r-1}H_{r}S_{\hat{p}}^{(r)}.\label{per term2}
	\end{aligned}
\end{equation}

Combining \eqref{per term1} and \eqref{per term2}, we obtain identity \eqref{per cape 1'}.
\end{proof}

\begin{remark}
 Suppose that $p_{ij}=q_{kl}=1$ for all possible $i,j,k,l$,  then \eqref{e:Cap1} is the Capelli-type identity for permanent of Manin matrix  \cite{CFR}.
\end{remark}

We can also define the multi-parametric quantum column permanents and the $\hat{p}$-minor permanents. Let $I=(i_1,\cdots,i_r)$, $J=(j_1,\cdots,j_r)$ be two multi-indices. The $\hat{p}$-minor permanent is defined as:
\begin{equation}
	{\cper}_{\hat{p}}(M_{IJ})=\sum \limits_{\sigma \in S_n}  \mu(\hat{p},I,\si)    M_{i_{\si(1),j_1}}\cdots M_{i_{\si(r),j_r}}.
\end{equation}
The (normalized) column permanent is defined similarly.
Then we have the following analog of Proposition \ref{Capelli-type identity for permanent}.
\begin{proposition}
	Let $M^{t}$ be an $m\times n$ $(1,\hat{p})$-$Manin$ matrix and $N$ an $m \times s$  matrix and suppose that
\begin{equation}\label{cper capp rel1}
	M_2N_1-N_1M_2=-H_2 	P_{s \times m}.
\end{equation}
and if $m=1$ suppose further that
\begin{equation}\label{cper Relation MH}
	(1+P_{\hat{p'}})(M_1H_2-H_1M_2	 P_{m \times s })=0.
\end{equation}
Let $I=(i_1,\cdots,i_r)$  and $K=(1\leq k_1 \leq k_2 \leq \cdots \leq k_r \leq s)$ be two multi-indices. Then
\begin{equation*}
	\widehat{\cper}_{\hat{p}}((MN)_{IK}-H_{IK}diag(r-1,\dots,1,0) )= \sum\limits_{J} \widehat{\cper}_{\hat{p}}(M_{IJ}) \widehat{\cper} (N_{JK}),
\end{equation*}
where the sum is taken over all multi-indices of non-decreasing integers $J \subset (1,\dots,m)$.
\end{proposition}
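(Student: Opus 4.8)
The plan is to transcribe, almost verbatim, the argument that proves Proposition~\ref{Capelli-type identity for permanent} (and Proposition~\ref{det capelli}), with ``$M^t$ is a $(1,\hat p)$-Manin matrix'' playing the role of ``$N$ is a Manin matrix'' and the $\hat{p'}$-symmetrizer playing the role of the $\hat q$-antisymmetrizer. First I would record the purely combinatorial extraction step: since a column permanent is a row permanent of the transpose, $\cper_{\hat p}(P_{IJ})=\rper_{\hat p}(P^t_{JI})$, one checks by unwinding definitions (no Manin hypothesis needed) that for any $n\times s$ matrix $P$ the coefficient of $e_{i_1}\otimes\cdots\otimes e_{i_r}$ in $S_{\hat{p'}}^{(r)}P_1\cdots P_r\,(e_{k_1}\otimes\cdots\otimes e_{k_r})$ is $\tfrac{v(\alpha_I)}{r!}\,\widehat{\cper}_{\hat p}(P_{IK})$; equivalently, introduce auxiliary generators $\psi_1,\dots,\psi_n$ commuting with all entries of $M,N,H$ and satisfying $\psi_j\psi_i=p_{ij}\psi_i\psi_j$, and read these permanents off products of the form $\Psi P$.

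With this reduction in hand, the whole statement comes down to the operator identity
\begin{equation*}
S_{\hat{p'}}^{(r)}(MN-(r-1)H)_1(MN-(r-2)H)_2\cdots(MN)_r
=S_{\hat{p'}}^{(r)}M_1\cdots M_rN_1\cdots N_r
\end{equation*}
in $\mathfrak R\otimes{\mathbb C^n}^{\otimes r}\otimes{\mathbb C^m}^{\otimes r}$: applying both sides to $e_{k_1}\otimes\cdots\otimes e_{k_r}$ and reading off the coefficient of $e_{i_1}\otimes\cdots\otimes e_{i_r}$ puts $\widehat{\cper}_{\hat p}((MN)_{IK}-H_{IK}\diag(r-1,\dots,1,0))$ on the left and $\sum_J\widehat{\cper}_{\hat p}(M_{IJ})\widehat{\cper}(N_{JK})$ on the right. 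The restriction of $J$ to non-decreasing multi-indices on the right is exactly where the hypothesis on $M^t$ enters: by Lemma~\ref{transpose} applied to $M$ (whose transpose is a $(1,\hat p)$-Manin matrix) one has $S_{\hat{p'}}M_1M_2A_{1}=0$, hence $S_{\hat{p'}}^{(r)}M_1\cdots M_r=S_{\hat{p'}}^{(r)}M_1\cdots M_rS^{(r)}$, where $S^{(r)}=S_{1}^{(r)}$ is the undeformed symmetrizer on ${\mathbb C^m}^{\otimes r}$ (so that, in particular, the components of $\Phi=\Psi M$ commute pairwise).

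The operator identity itself I would prove by induction on $r$, the case $r=1$ being trivial. Factor $S_{\hat{p'}}^{(r)}=S_{\hat{p'}}^{(r)}S_{\hat{p'}}^{(r-1)}$ with $S_{\hat{p'}}^{(r-1)}$ acting on the indices $\{2,\dots,r\}$, apply the inductive hypothesis to the factors in positions $2,\dots,r$, and commute $N_1$ to the right past $M_2\cdots M_r$ using \eqref{cper capp rel1}. As in the proof of Proposition~\ref{det capelli} this produces, besides the desired term, the sum $\sum_{i=2}^{r}S_{\hat{p'}}^{(r)}M_1\cdots M_{i-1}H_iP_{s\times m}^{(1,i)}M_{i+1}\cdots M_rN_2\cdots N_r$ together with the term $-(r-1)\,S_{\hat{p'}}^{(r)}H_1M_2\cdots M_rN_2\cdots N_r$, so it suffices to prove that for each $2\le i\le r$
\begin{equation*}
S_{\hat{p'}}^{(r)}M_1\cdots M_{i-1}H_iP_{s\times m}^{(1,i)}M_{i+1}\cdots M_r=S_{\hat{p'}}^{(r)}H_1M_2\cdots M_r .
\end{equation*}
For this one writes $S_{\hat{p'}}^{(r)}=S_{\hat{p'}}^{(r)}\tfrac{1+P_{\hat{p'}}^{(i-1,i)}}{2}$, uses the second hypothesis \eqref{cper Relation MH} in the form $S_{\hat{p'}}M_1H_2=S_{\hat{p'}}H_1M_2P_{m\times s}$ to slide $H$ leftward one slot at a time, and finally collapses the resulting string of transpositions by $P_{m\times s}^{(1,2)}\cdots P_{m\times s}^{(i-1,i)}P_{s\times m}^{(1,i)}=P^{(2,3)}\cdots P^{(i-1,i)}$ (the identity used in the proof of Proposition~\ref{det capelli}, cf.\ \eqref{eq permutaion}), the leftover transpositions being absorbed into the undeformed symmetrizer $S^{(r-1)}$ supplied by Lemma~\ref{transpose}.

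The step I expect to require the most care is the sign accounting. In the proof of Proposition~\ref{det capelli} the two sources of signs — one $-1$ per $H$-slide, from $(1-P_{\hat q})(\cdots)=0$, and one $-1$ per leftover transposition, from the undeformed \emph{anti}symmetrizer — multiply to $-1$, which is what makes the $r-1$ terms cancel the $-(r-1)$ term (see \eqref{capplli-term3}). In the present situation hypothesis \eqref{cper Relation MH} carries the \emph{opposite} sign and the relevant projector is the \emph{symmetrizer}, so each of these two sources now contributes $+1$; the net effect still produces a cancellation, precisely because the $H$-correction in the statement itself now enters with a minus sign ($-H_{IK}\diag(r-1,\dots,0)$). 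Checking that this triple of sign changes — in the hypothesis, in the projector, and in the $\diag$-correction — is mutually consistent, and that the transposed Manin relation indeed produces the symmetrizer on the row ($\mathbb C^n$) side, is the only genuinely non-mechanical point; once it is settled, the induction is a line-for-line transcription of the proofs of Propositions~\ref{det capelli} and \ref{Capelli-type identity for permanent}.
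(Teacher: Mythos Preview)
Your proposal is correct and follows exactly the route the paper takes: the paper's own proof is the single sentence ``the proposition can be proved by Lemma~\ref{transpose} and the arguments used in Proposition~\ref{Capelli-type identity for permanent},'' and what you have written is precisely the unpacking of that sentence---placing the $S_{\hat{p'}}^{(r)}$ on the left (via Lemma~\ref{transpose} applied to $M$), running the induction of Proposition~\ref{det capelli} with the symmetrizer in place of the antisymmetrizer, and checking that the three sign flips (in hypothesis~\eqref{cper Relation MH}, in the projector, and in the $\diag$-correction) are mutually consistent. Your sign bookkeeping is right: sliding $H$ leftward via \eqref{cper Relation MH} contributes no sign, and each leftover $P^{(j,j+1)}$ is absorbed with $+1$ because $S_{\hat{p'}}M_1M_2P=S_{\hat{p'}}M_1M_2$ (from $S_{\hat{p'}}M_1M_2A_1=0$), so each error term equals $+S_{\hat{p'}}^{(r)}H_1M_2\cdots M_r$ and cancels the $-(r-1)$ term.
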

\begin{proof}
 The  proposition can be proved by  Lemma \ref{transpose} and the arguments used in  Proposition \ref{Capelli-type identity for permanent}.
\end{proof}

\section{MacMahon Theorem}\label{s:macmahon}

Let $A$ be any $n\times n$ complex matrix, and $x_1,\ldots,x_n$ a set of variables.
Denote by $G(k_1,\ldots k_n)$ the coefficient of $x_1^{k_1}\ldots x_n^{k_n}$ in
\begin{equation*}
 \prod_{i=1}^{n}(a_{i1}x_1+\cdots+a_{in}x_n)^{k_i}.
\end{equation*}
Let $t_1,\ldots,t_n$ be another set of variables, $T=diag(t_1,\ldots,t_n)$.
Then  the  MacMahon master theorem holds \cite{Mac}:
\begin{equation*}
  \sum_{k_1,\ldots,k_n}G(k_1,\ldots k_n)t_1^{k_1}\cdots t_n^{k_n}=\frac{1}{\det(I-TA)}.
\end{equation*}
By taking $t_1=t_2=\cdots=t_n=t$,
\begin{equation*}
  \sum_{k_1+\ldots+k_n=k}G(k_1,\ldots k_n)t^k =\frac{1}{\det(I-tA)}.
\end{equation*}

Generalizations of the MacMahon master theorem have been obtained for one-parameter $q$-Manin matrices \cite{GLZ, KP} and super Manin matrices \cite{MR}.
In this section, we study the MacMahon Master Theorem for general $A$-Manin matrices. As a special case, we derive the MacMahon Master Theorem for multiparametric quantum Manin matrices.

Let $A $ be any idempotent, we call the $(A,A)$-Manin matrix $M$ an  $A$-Manin matrix.
Let $P_{A}=1-2A$, then $P_{A}^2=1$. We assume that $P_A$ satisfy the braid relation:
\begin{equation*}
P_{A}^{(12)}P_{A}^{(23)}P_{A}^{(12)}=P_{A}^{(23)}P_{A}^{(12)}P_{A}^{(23)}.
\end{equation*}
By $S^{(k)}$ and $A^{(k)}$ we denote the respective the normalized symmetrizer and antisymmetrizer:
\begin{equation*}
S^{(k)}=\frac{1}{k!}\sum_{\sigma\in S_k}  P_A^{\sigma}, \qquad
A^{(k)}=\frac{1}{k!}\sum_{\sigma\in S_k}\text{sgn}\ \sigma \cdot P_A^{\sigma}
\end{equation*}

We set
\begin{align}
\text{Bos}&=1+\sum_{k=1}^{\infty}tr_{1,\ldots,k}S^{(k)}M_1\cdots M_k,\\
\text{Ferm}&=1+\sum_{k=1}^{\infty}(-1)^ktr_{1,\ldots,k}A^{(k)}M_1\cdots M_k.
\end{align}

Using the R-matrix method \cite{MR}, we have the generalized MacMahon Theorem.
\begin{theorem} For any $A$-Manin matrix,
one has that
\begin{equation}
\text{Bos}\times \text{Ferm} =1.
\end{equation}
\end{theorem}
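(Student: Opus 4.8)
The plan is to adapt the R-matrix argument of \cite{MR}. Grading everything by total degree in the entries of $M$, expand the product and collect terms of fixed degree $N$:
\[
\text{Bos}\times\text{Ferm}=\sum_{N\ge 0}\ \sum_{k=0}^{N}(-1)^{N-k}\big(tr_{1,\ldots,k}\,S^{(k)}M_1\cdots M_k\big)\big(tr_{1,\ldots,N-k}\,A^{(N-k)}M_1\cdots M_{N-k}\big).
\]
The trace over $(\mathbb{C}^{n})^{\otimes N}$ factorizes through $(\mathbb{C}^{n})^{\otimes k}\otimes(\mathbb{C}^{n})^{\otimes(N-k)}$, and in such a product the matrix parts of $M_1\cdots M_k$ and $M_{k+1}\cdots M_N$ act on complementary auxiliary legs while their $\mathfrak{R}$-entries still multiply left-to-right in the right order; hence each inner summand equals
\[
(-1)^{N-k}\,tr_{1,\ldots,N}\big(S^{(k)}_{[1,k]}\,A^{(N-k)}_{[k+1,N]}\,M_1\cdots M_N\big),
\]
the brackets indicating the legs on which the normalized (anti)symmetrizers act. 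The $N=0$ summand is $1$, so the theorem is equivalent to the family of honest finite identities
\[
\mathcal{Q}_N:=\sum_{k=0}^{N}(-1)^{N-k}\,tr_{1,\ldots,N}\big(S^{(k)}_{[1,k]}\,A^{(N-k)}_{[k+1,N]}\,M_1\cdots M_N\big)=0,\qquad N\ge 1.
\]

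For $N=1$ the two terms are $\pm\,tr_1 M_1$, and for $N=2$ the operator bracket vanishes outright because $S^{(2)}+A^{(2)}=1$; neither case uses the Manin property. For $N\ge 3$ the underlying operator $\sum_{k}(-1)^{N-k}S^{(k)}_{[1,k]}A^{(N-k)}_{[k+1,N]}$ is genuinely nonzero---already for $N=3$ it is a nonzero combination of $P_A^{(12)}$, $P_A^{(23)}$ and $P_A^{(12)}P_A^{(23)}P_A^{(12)}$---so both the cyclicity of the trace and the Manin relations are essential. I would prove $\mathcal{Q}_N=0$ by induction on $N$. First peel off one auxiliary leg from each projector via the standard recursion $k\,S^{(k)}_{[1,k]}=S^{(k-1)}_{[1,k-1]}\big(1+P_A^{(k-1,k)}+P_A^{(k-2,k-1)}P_A^{(k-1,k)}+\cdots\big)$ and its sign-twisted analogue for $A^{(N-k)}_{[k+1,N]}$, which are legitimate precisely because $P_A$ obeys the braid relation. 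This exposes the two junction legs, where one applies the local Manin relation $A^{(2)}_{(i,i+1)}M_iM_{i+1}S^{(2)}_{(i,i+1)}=0$, i.e.\ the defining relation of an $A$-Manin matrix placed on the $i$-th and $(i+1)$-th auxiliary legs (Proposition~\ref{p:comm}); combined with the cyclicity of $tr_{1,\ldots,N}$, this rewrites $\mathcal{Q}_N$ as a signed sum of copies of $\mathcal{Q}_{N-1}$---which vanish by induction---together with residual terms that cancel in pairs.

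The main obstacle is exactly this bookkeeping: arranging the alternating sum so that after the recursions and one use of the Manin relation at each junction one lands precisely on the degree-$(N-1)$ identity with no residue, the point being to control which permutations survive and to check that the leftover terms cancel. This is where the R-matrix formalism of \cite{MR} does the work, and I would import that mechanism essentially verbatim. Finally, the multiparametric statement follows by specialization: for $A=A_{\hat q}$ one has $P_A=P_{\hat q}=\sum_{i,j}q_{ji}E_{ij}\otimes E_{ji}$, which satisfies the braid relation by a short computation using $q_{ij}q_{ji}=1$; thus the theorem applies to every $(\hat q)$-Manin matrix, and unwinding $tr\,A_{\hat q}^{(k)}M_1\cdots M_k=\sum_{|I|=k}{\cdet}_{\hat q}(M_{II})$ together with the corresponding permanent-type expression for $S_{\hat q}^{(k)}$ yields the MacMahon master theorem for the quantum algebra of multiparametric Manin matrices.
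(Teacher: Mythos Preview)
Your reduction to the degree-$N$ identity $\mathcal{Q}_N=0$ is exactly the paper's first step. But the mechanism you then sketch---induction on $N$, peeling off one auxiliary leg to land on $\mathcal{Q}_{N-1}$, with the local relation $A^{(2)}M_iM_{i+1}S^{(2)}=0$ applied at the junction---is \emph{not} what \cite{MR} or the paper actually do, so ``import that mechanism essentially verbatim'' is inconsistent with your own description. The argument in the paper is not inductive in $N$ at all; for each fixed $N$ it proves a \emph{telescoping} identity.

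Concretely, write $T_r=tr_{1,\ldots,N}\big(S^{[1,r]}A^{[r+1,N]}M_1\cdots M_N\big)$ and introduce the ``overlapping'' quantities $U_r=tr_{1,\ldots,N}\big(S^{[1,r]}A^{[r,N]}M_1\cdots M_N\big)$, where the symmetrizer and antisymmetrizer now share leg $r$. The key step is the trace replacement
\[
T_r=\frac{r(N-r+1)}{N}\,U_r+\frac{(r+1)(N-r)}{N}\,U_{r+1},
\]
obtained by inserting the one-step recursions
$(N-r+1)A^{[r,N]}=A^{[r+1,N]}-(N-r)A^{[r+1,N]}P_A^{(r,r+1)}A^{[r+1,N]}$ and
$(r+1)S^{[1,r+1]}=S^{[1,r]}+rS^{[1,r]}P_A^{(r,r+1)}S^{[1,r]}$,
and then using cyclicity of the trace together with Proposition~\ref{p:comm2}. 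This last is where the Manin property enters: not as the two-site relation $A^{(2)}M_iM_{i+1}S^{(2)}=0$ at a junction, but as the global absorption $A^{[r+1,N]}M_1\cdots M_N=A^{[r+1,N]}M_1\cdots M_NA^{[r+1,N]}$ (and its symmetric analogue), which lets the duplicated projector pass through $M_1\cdots M_N$ inside the trace. Substituting into $\sum_r(-1)^rT_r$ and reindexing $r\mapsto r-1$ in the second sum gives zero on the nose. Your inductive scheme, as written, does not supply this cancellation, and the ``residual terms that cancel in pairs'' step is precisely the content that the telescoping identity makes explicit.
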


\begin{proof}
We have that $A^{\{r+1,\ldots,k\}}M_1\cdots M_r=M_1\cdots M_r A^{\{r+1,\ldots,k\}},$ where $A^{\{r+1,\ldots,k\}}$ denotes the antisymmetrizer over the copies of $End(\mathbb{C}^n)$ labeled by $\{r+1,\ldots,k\}$. Therefore, it is sufficient to show that
\begin{equation}
\sum_{r=0}^k (-1)^r tr_{1,\ldots,k}S^{(r)}A^{\{r+1,\ldots,k\}}M_1\cdots M_k=0.
\end{equation}

In the following we show that
\begin{equation}\label{trace replacement}
\begin{aligned}
&tr_{1,\ldots,k}S^{(r)}A^{\{r+1,\ldots,k\}}M_1\cdots M_k\\
=&tr_{1,\ldots,k}\frac{r(k-r+1)}{k}S^{(r)}A^{\{r,\ldots,k\}}
M_1\cdots M_k\\
+
&tr_{1,\ldots,k}\frac{(r+1)(k-r)}{k}S^{(r+1)}A^{\{r+1,\ldots,k\}}
M_1\cdots M_k\\
\end{aligned}
\end{equation}
By the relations of elements in the group algebra of $S^{(k)}$, we have
\begin{equation}
\begin{aligned}
&(k-r+1)A^{\{r,\ldots,k\}}=A^{\{r+1,\ldots,k\}}-(k-r)A^{\{r+1,\ldots,k\}}  P_{A}^{(r,r+1)} A^{\{r+1,\ldots,k\}},\\
&(r+1)S^{(r+1)}=S^{(r)}+rS^{(r)} P_{A}^{(r,r+1)} S^{(r)}.
\end{aligned}
\end{equation}

\begin{equation}
\begin{aligned}
&tr_{1,\ldots,k}S^{(r)}A^{\{r+1,\ldots,k\}}  P_{A}^{(r,r+1)} A^{\{r+1,\ldots,k\}}M_1\cdots M_k\\
=&tr_{1,\ldots,k} S^{(r)}  P_{A}^{(r,r+1)} A^{\{r+1,\ldots,k\}}M_1\cdots M_kA^{\{r+1,\ldots,k\}}\\
=&tr_{1,\ldots,k} S^{(r)}  P_{A}^{(r,r+1)} A^{\{r+1,\ldots,k\}}M_1\cdots M_k
\end{aligned}
\end{equation}
Similarly,
\begin{align}
tr_{1,\ldots,k}S^{(r)} P_{A}^{(r,r+1) }S^{(r)} A^{\{r+1,\ldots,k\}}M_1\cdots M_k\\
=tr_{1,\ldots,k} S^{(r)}  P_{A}^{(r,r+1)}A^{\{r+1,\ldots,k\}}M_1\cdots M_k
\end{align}
These imply equation \eqref{trace replacement}. Therefore the telescoping sum equals to zero.
\end{proof}

\begin{remark}
Since the operator
$P_{\hat q}$ satisfies the braid relation, the MacMahon theorem holds for
$(\hat q)$-Manin matrix ($(\hat q, \hat q)$-Manin matrix) as a special case of the Theorem.
\end{remark}

Let $M$ be an $n\times n$ $A$-Manin matrix. Introduce the following sequential partial traces
on the tensor space by $e_0=1$, and for $k\geq 1$
\begin{equation}\label{e:elementary}
e_k=tr_{1,\ldots,k}A^{(k)}M_1\cdots M_k.
\end{equation}
These elements are {\it quantum elementary symmetric functions}.

Let $B, C$ be $n\times n$ matrices.
Denote $B*C=tr_1 P_{A} B_1C_2$.
%\begin{proposition}
%Let $B$, $C$, $D$ be $n\times n$ matrices, then
%\begin{align}
%(B*C)*D=B*(C*D).
%\end{align}
%\end{proposition}
%\begin{proof} By definition
%\begin{equation}
%\begin{split}
%(B*C)*D&=tr_2 P_{A}^{(2,3)}(tr_1  P_{A}^{(1,2)} B_1C_2)D_3\\
%&=tr_{1,2} P_{A}^{(2,3)} P_{A}^{(1,2)} B_1C_2D_3,\\
%B*(C*D)&=tr_1 P_{A}^{(1,3)} B_1(tr_2  P_{A}^{(2,3)} C_2D_3)\\
%&=tr_{1,2} P_{A}^{(1,3)} P_{A}^{(2,3)} B_1C_2D_3.
%\end{split}
%\end{equation}
%The proposition follows by $
% P_{A}^{(2,3)} P_{A}^{(1,2)}=
%P_{A}^{(1,3)} P_{A}^{(2,3)}$.
%\end{proof}
Let $M^{[k]}$ be the $k$th power of $M$ under the multiplication $*$, i.e.
\begin{align}\label{e:power-m}
M^{[0]}=1, M^{[1]}=M, M^{[k]}=M^{[k-1]}*M,k>1.
\end{align}

\begin{lemma}\label{lemm newton}
Let $M$ be an $n\times n$   $A$-Manin matrix. Then

\begin{equation}\label{lemma newton eq}
\begin{split}
 ktr_{1,\ldots,k-1}A^{(k)}M_1\cdots M_k
=
\sum_{i=0}^{k-1}(-1)^{k+i+1} e_i M^{[k-i]}
\end{split}
\end{equation}
\end{lemma}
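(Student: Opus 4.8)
The plan is to prove the identity \eqref{lemma newton eq} by induction on $k$, using the recursive definition \eqref{e:power-m} of the $*$-powers $M^{[k]}$ together with the standard decomposition of the antisymmetrizer $A^{(k)}$ into $A^{(k-1)}$ acting on the first $k-1$ strands and the transposition swapping strands $k-1$ and $k$. The base case $k=1$ reads $\mathrm{tr}_{\emptyset}A^{(1)}M_1 = M = M^{[1]}$, which is immediate. For the inductive step I would first rewrite $k\,\mathrm{tr}_{1,\ldots,k-1}A^{(k)}M_1\cdots M_k$ by inserting the group-algebra identity $k\,A^{(k)} = A^{(k-1)}\bigl(1 - (k-1)P_A^{(k-1,k)}A^{(k-1)}\bigr)$ (the antisymmetrizer analogue of the symmetrizer identity $(r+1)S^{(r+1)} = S^{(r)} + rS^{(r)}P_A^{(r,r+1)}S^{(r)}$ used in the MacMahon proof above), where here $A^{(k-1)}$ denotes the antisymmetrizer on the first $k-1$ strands. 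This splits the left-hand side into two terms.

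The first term, $\mathrm{tr}_{1,\ldots,k-1}A^{(k-1)}M_1\cdots M_k$, is handled by taking the partial trace over strand $k$ last: since $A^{(k-1)}$ does not involve strand $k$ and $\mathrm{tr}_k M_k$ would give $\mathrm{tr}\,M$, but more precisely $\mathrm{tr}_{1,\ldots,k-1}A^{(k-1)}M_1\cdots M_{k-1}M_k$ is recognized — after using Proposition~\ref{p:comm2} to slide $A^{(k-1)}$ and the Manin property — as being related to $e_{k-1}M$; the precise bookkeeping of the sign $(-1)^{k+i+1}$ and of which power $M^{[k-i]}$ appears is the routine part. The second term, involving $P_A^{(k-1,k)}$, is where the $*$-product enters: contracting strands $k-1$ and $k$ through $P_A$ converts $M_{k-1}M_k$ (traced appropriately) into a factor of the form $(\cdots)*M$ or a higher $*$-power, by the very definition $B*C = \mathrm{tr}_1 P_A B_1 C_2$. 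Reorganizing this term using the inductive hypothesis for $\mathrm{tr}_{1,\ldots,k-2}A^{(k-1)}M_1\cdots M_{k-1}$ produces the sum $\sum_{i=0}^{k-2}(-1)^{k+i}e_i M^{[k-i]}$ together with the missing $i=k-1$ term, and combining with the first term yields the claimed telescoped expression.

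The main obstacle I anticipate is the careful manipulation of the partial traces and the braid-type relations needed to move $P_A^{(k-1,k)}$ and the antisymmetrizers past the string $M_1\cdots M_k$ so that the contraction $P_A M_{k-1}M_k$ genuinely realizes the $*$-multiplication and reproduces $M^{[k-i]}$ rather than some other operator product; here one must repeatedly invoke both parts of Proposition~\ref{p:comm2} (that $A_{\hat q}^{(k)}M_1\cdots M_k$ is symmetric in the appropriate sense) and the cyclicity of partial traces, exactly as in the MacMahon argument preceding this lemma. The signs and the index shift between $e_i$ and $M^{[k-i]}$ are purely combinatorial once the operator identities are in place, so I would present those steps tersely and devote the bulk of the write-up to justifying the trace/braid moves.
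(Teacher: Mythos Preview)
Your approach is correct and matches the paper's proof: the paper uses exactly the recursion $kA^{(k)}=A^{(k-1)}-(k-1)A^{(k-1)}P_A^{(k-1,k)}A^{(k-1)}$, identifies the first term directly as $e_{k-1}M$ (no Manin move needed there, since strand $k$ is untraced), rewrites the second term as $-(k-1)\,\mathrm{tr}_{1,\ldots,k-2}\bigl(A^{(k-1)}M_1\cdots M_{k-1}\bigr)*M$ via the definition of $*$, and then iterates, which is your induction. You are overestimating the technical difficulty---only the antisymmetrizer half of Proposition~\ref{p:comm2} and cyclicity of the partial trace are needed to absorb the extra $A^{(k-1)}$, no braid relations.
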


\begin{proof}

Using the relation
\begin{equation}
A^{(k)}=\frac{1}{k}A^{(k-1)}- \frac{k-1}{k}A^{(k-1)}  P_{A}^{(k-1,k)} A^{(k-1)},
\end{equation}
we have
\begin{equation}
\begin{split}
&ktr_{1,\ldots,k-1}A^{(k)}M_1\cdots M_k\\
&=e_{k-1}M_k
- (k-1)tr_{1,\ldots,k-1} A^{(k-1)} P_{A}^{(k-1,k)}A^{(k-1)} M_1\cdots M_k\\
&=e_{k-1}M
- (k-1)tr_{1,\ldots,k-2} (A^{(k-1)} M_1\cdots M_{k-1})*M.\\
\end{split}
\end{equation}
 Iterating it we obtain
\begin{equation}\label{eq cayleyhamiton}
ktr_{1,\ldots,k-1}A^{(k)}M_1\cdots M_k=\sum_{i=0}^{k-1}(-1)^{k+i+1} e_i M^{[k-i]}.
\end{equation}
\end{proof}

We denote $A(t)$, $S(t)$ and $T(t)$ as follows:
\begin{align}
S(t)&=\sum_{k=0}^{\infty}t^k trS^{(k)}M_1\cdots M_k,\\
A(t)&=\sum_{k=0}^{\infty}(-t)^k trA^{(k)}M_1\cdots M_k,\\
T(t)&=\sum_{k=0}^{\infty}t^k trM^{[k+1]}.
\end{align}

We now have the generalized Newton's identities for multi-parametric quantum groups.
\begin{theorem}[Newton's identities]
Let $M$ be an $n\times n$ $A$-Manin matrix.  Then
\begin{align}
&\partial_t A(t)=-A(t) T(t),\\
&\partial_t S(t)= T(t)S(t).
 \end{align}
\end{theorem}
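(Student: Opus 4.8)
The plan is to deduce the two differential equations from results already established in this section: the first from Lemma~\ref{lemm newton} by passing to an ordinary trace, and the second from the first together with the MacMahon theorem. First I would fix notation: put $p_k=\mathrm{tr}\,M^{[k]}$, so that $T(t)=\sum_{k\ge0}p_{k+1}t^k$, and note that $A(t)=\sum_{k\ge0}(-t)^k e_k$ with $e_k=\mathrm{tr}_{1,\ldots,k}A^{(k)}M_1\cdots M_k$ as in \eqref{e:elementary}, while $S(t)=\sum_{k\ge0}t^k h_k$ with $h_k=\mathrm{tr}_{1,\ldots,k}S^{(k)}M_1\cdots M_k$ and the conventions $e_0=h_0=1$.

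For $\partial_tA(t)=-A(t)T(t)$ I would take the full trace of the identity in Lemma~\ref{lemm newton}. Using $\mathrm{tr}\big(\mathrm{tr}_{1,\ldots,k-1}X\big)=\mathrm{tr}_{1,\ldots,k}X$ and $\mathrm{tr}\,M^{[j]}=p_j$, this gives the scalar Newton relation
\[
k\,e_k=\sum_{i=0}^{k-1}(-1)^{k+i+1}e_i\,p_{k-i},\qquad k\ge1.
\]
Now $\partial_tA(t)=\sum_{k\ge1}(-1)^k k\,e_k\,t^{k-1}$, while the coefficient of $t^{k-1}$ in $-A(t)T(t)=-\sum_{i,j\ge0}(-1)^i e_i\,p_{j+1}\,t^{i+j}$ equals $-\sum_{i=0}^{k-1}(-1)^i e_i\,p_{k-i}$; multiplying the scalar relation by $(-1)^k$ shows these agree for all $k\ge1$ (the case $k=1$ reading $e_1=p_1=\mathrm{tr}\,M$), which proves the first identity.

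For $\partial_tS(t)=T(t)S(t)$ I would invoke the MacMahon theorem for the matrix $tM$. Since the defining relation $A M_1M_2(1-A)=0$ is homogeneous, $tM$ is again an $A$-Manin matrix, and evaluating $\mathrm{Ferm}$ and $\mathrm{Bos}$ at $tM$ gives $A(t)$ and $S(t)$ respectively; hence the theorem reads $A(t)S(t)=1$. As $A(t)=1+O(t)$ it is a unit in $\mathfrak{R}[[t]]$ with $S(t)=A(t)^{-1}$, and all coefficients in sight are traces, hence central; differentiating $A(t)S(t)=1$ yields $S'(t)=-A(t)^{-1}A'(t)S(t)$, and substituting $A'(t)=-A(t)T(t)$ from the first part gives $S'(t)=T(t)S(t)$.

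The real content is Lemma~\ref{lemm newton}, which is already in hand, so the remaining work is light. The points that need care are: the sign bookkeeping in the $A(t)$ computation (keeping the $(-1)^i$ from $A(t)$ apart from the $(-1)^k$ from $\partial_tA(t)$); the verification that $M\mapsto tM$ preserves the $A$-Manin property, which is what legitimizes the graded form $A(t)S(t)=1$ of the MacMahon theorem; and the $k=1$ base case $e_1=p_1=\mathrm{tr}\,M$. A self-contained alternative for the second identity would be to prove a symmetrizer analogue of Lemma~\ref{lemm newton}, but this is messier: the relation $A^{(k)}M_1\cdots M_k=A^{(k)}M_1\cdots M_kA^{(k)}$ exploited in Lemma~\ref{lemm newton} has, for the symmetrizer, the idempotent on the \emph{opposite} side of $M_1\cdots M_k$ (cf.\ Proposition~\ref{p:comm2}), so the peeling recursion must be mirrored; the MacMahon shortcut avoids this altogether.
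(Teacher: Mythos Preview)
Your proof is correct and follows essentially the same route as the paper: take the trace of Lemma~\ref{lemm newton} to get the scalar Newton recursion and hence $\partial_tA(t)=-A(t)T(t)$, then combine with the MacMahon identity $A(t)S(t)=S(t)A(t)=1$ and differentiate to obtain $\partial_tS(t)=T(t)S(t)$. One small correction: your parenthetical ``all coefficients in sight are traces, hence central'' is both unnecessary and not generally true (the $e_k$, $h_k$, $p_k$ are elements of the noncommutative algebra $\mathfrak{R}$, and being a matrix trace does not force centrality there); fortunately your computation never actually uses it, since the cancellation $A(t)^{-1}A(t)=1$ and the two-sided invertibility of $1+O(t)$ in $\mathfrak{R}[[t]]$ are all that is needed.
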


\begin{proof}
Take the trace in the equation \eqref{lemma newton eq}, we have
\begin{equation}\label{eq cayleyhamiton}
ke_k=\sum_{i=0}^{k-1}(-1)^{k+i+1} e_i tr M^{[k-i]}.
\end{equation}
This implies that $\partial_t A(t)=-A(t) T(t)$.

It follows from the MacMahon Theorem that
$A(t)S(t)=S(t)A(t)=1$.
By the Leibniz rule, we have
\begin{align}
(\partial_t A(t))  S(t) +A(t)\partial_t S(t)=0.
\end{align}

Therefore, $A(t)\partial_t S(t)=A(t) T(t)S(t)$. Multiplying the both sides of the equation by $S(t)$ from the left, we have
\begin{equation}
\partial_t S(t)= T(t)S(t).
\end{equation}
\end{proof}

\section{Cayley-Hamiton's theorem}\label{s:cayley-hamilton}

The Cayley-Hamilton theorem states that every square matrix over a commutative ring satisfies its own characteristic equation.
This theorem was generalized to quantum semigroups $\mathrm{M}_q(n)$ \cite{JJZ}. In this section we study the Cayley-Hamilton theorem for multiparameter Manin matrices.

Let $M$ be an $n\times n$ $(\hat q)$-Manin matrix.
The $\hat q$-characteristic polynomial of $M$ is defined as
\begin{equation}
\mathrm{char}_{\hat q}(M,t)=\sum_{k=0}^n(-1)^k e_k t^{n-k}.
\end{equation}
where the $e_k$ are the quantum elementary symmetric functions \eqref{e:elementary}.
%We denote $A=A_{\hat q}$, $P=P_{\hat q}$.
We have the following Cayley-Hamiton Theorem.
%Using  the  arguments of \cite{CFRS}, we have the .
\begin{theorem}
Let $M$ be an $n\times n$ $(\hat q)$-Manin matrix. Then
\begin{equation}
\sum_{k=0}^n(-1)^k e_k M^{[n-k]}=0.
\end{equation}
\end{theorem}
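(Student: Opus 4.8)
The plan is to extract the identity from Lemma~\ref{lemm newton}, but with a twist. Taking $k=n+1$ in that lemma and using that the $\hat q$-antisymmetrizer $A_{\hat q}^{(n+1)}$ on $(\mathbb C^n)^{\otimes(n+1)}$ vanishes would only yield $\big(\sum_{k=0}^n(-1)^k e_k M^{[n-k]}\big)*M=0$, which is weaker than the claim. Instead I would take $k=n$ in Lemma~\ref{lemm newton}, which reads
\begin{equation*}
n\, tr_{1,\ldots,n-1}A_{\hat q}^{(n)}M_1\cdots M_n=\sum_{i=0}^{n-1}(-1)^{n+i+1}e_i M^{[n-i]},
\end{equation*}
and separately evaluate the partial trace on the left-hand side as $e_n M^{[0]}$; the theorem then follows by a rearrangement.

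To evaluate the partial trace I would use the identity $A_{\hat q}^{(n)}M_1\cdots M_n={\cdet}_{\hat q}(M)\,A_{\hat q\hat p}^{(n)}$ from Section~\ref{s:gen-identities}; since $M$ is a $(\hat q)$-Manin matrix we have $\hat p=\hat q$, so the right-hand side is ${\cdet}_{\hat q}(M)\,A_{\hat q\hat q}^{(n)}$. Pulling the scalar ${\cdet}_{\hat q}(M)\in\mathfrak R$ out of the $\mathbb C$-linear partial trace, it remains to compute $tr_{1,\ldots,n-1}A_{\hat q\hat q}^{(n)}$. From the explicit formula for $A_{\hat q\hat q}^{(n)}$ together with $\mathrm{tr}(e_{ab})=\delta_{ab}$, tracing out the first $n-1$ slots forces $\sigma(i)=\rho(i)$ for $i\le n-1$, hence $\sigma=\rho$ in $S_n$; then the weight $\varepsilon(\hat q,\rho)/\varepsilon(\hat q,\sigma)$ equals $1$, only the terms $\frac{1}{n!}e_{\sigma(n)\sigma(n)}$ survive, and their sum over $S_n$ is $\frac{1}{n}$ times the identity matrix, i.e., $\frac{1}{n}M^{[0]}$. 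Taking one more trace shows in addition that ${\cdet}_{\hat q}(M)=e_n$, so $n\, tr_{1,\ldots,n-1}A_{\hat q}^{(n)}M_1\cdots M_n=e_n M^{[0]}$.

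Substituting this into the case $k=n$ of Lemma~\ref{lemm newton} gives $\sum_{i=0}^{n-1}(-1)^{n+i+1}e_i M^{[n-i]}=e_n M^{[0]}$. Since $(-1)^{n+n+1}=-1$, the term $e_n M^{[0]}$ can be absorbed into the sum as the $i=n$ summand, giving $\sum_{i=0}^{n}(-1)^{n+i+1}e_i M^{[n-i]}=0$, and multiplying through by $(-1)^{n+1}$ turns this into $\sum_{k=0}^n(-1)^k e_k M^{[n-k]}=0$, which is the claim. I expect the only genuine work to be the evaluation of $tr_{1,\ldots,n-1}A_{\hat q\hat q}^{(n)}$: one must check that the $\varepsilon$-weights really cancel after the partial trace and keep ${\cdet}_{\hat q}(M)$ (equivalently $e_n$) on the left throughout, since it need not be central in $\mathfrak R$. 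The remaining steps are sign bookkeeping and direct substitution into results already established.
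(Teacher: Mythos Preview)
Your proposal is correct and follows essentially the same route as the paper: specialize Lemma~\ref{lemm newton} to $k=n$ and identify the left-hand partial trace as $\tfrac{1}{n}e_n$. The paper simply asserts $tr_{1,\ldots,n-1}A^{(n)}=\tfrac{1}{n}$ and jumps to $tr_{1,\ldots,n-1}A^{(n)}M_1\cdots M_n=\tfrac{1}{n}{\cdet}_{\hat q}(M)=\tfrac{1}{n}e_n$; your computation via $A_{\hat q}^{(n)}M_1\cdots M_n={\cdet}_{\hat q}(M)\,A_{\hat q\hat q}^{(n)}$ and the explicit partial trace of $A_{\hat q\hat q}^{(n)}$ is just a more detailed justification of that same step, and your remaining sign and ordering bookkeeping is fine.
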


\begin{proof}
Since $tr_{1,\ldots,n-1}A^{(n)}=\frac{1}{n}$,
\begin{equation}
tr_{1,\ldots,n-1}A^{(n)}M_1\cdots M_n=\frac{1}{n}{\cdet}_{\hat{q}}(M)=\frac{1}{n}e_n.
\end{equation}
Then the theorem follows from Lemma \ref{lemm newton}.
\end{proof}

Let $M$ be an $n\times m$ $(\hat{q},\hat{p})$-Manin matrix  and $N$
be an $m\times n$ $(\hat{p},\hat{q})$-Manin matrix
such that the $N_{ij}$  commute with $M_{kl}$ and for all possible indices  $i,j,k,l$.
It follows from \cite{S} that $MN$ is an $n\times n$ $(\hat{q})$-Manin matrix
and $NM$ is an $m\times m$ $(\hat{p})$-Manin matrix.

\begin{theorem}
The characteristic polynomials of Manin matrices $MN$ and $NM$ satisfy the relation
\begin{align}
\mathrm{char}_{\hat q}(MN,t)=t^{n-m}\mathrm{char}_{\hat p}(NM,t).
\end{align}
 \end{theorem}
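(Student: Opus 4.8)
The plan is to reduce the stated identity to the equality, in each degree, of the quantum elementary symmetric functions of $MN$ and $NM$. Writing $e_k(MN)=\mathrm{tr}_{1,\ldots,k}A_{\hat q}^{(k)}(MN)_1\cdots(MN)_k$ and $e_k(NM)=\mathrm{tr}_{1,\ldots,k}A_{\hat p}^{(k)}(NM)_1\cdots(NM)_k$ for the quantities of \eqref{e:elementary} (legitimate because, by the remark before the theorem, $MN$ is a $(\hat q)$-Manin matrix and $NM$ a $(\hat p)$-Manin matrix), we have $\mathrm{char}_{\hat q}(MN,t)=\sum_{k=0}^n(-1)^ke_k(MN)t^{n-k}$ and $\mathrm{char}_{\hat p}(NM,t)=\sum_{k=0}^m(-1)^ke_k(NM)t^{m-k}$. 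Since the coefficients of these polynomials are precisely the $(-1)^ke_k$, the asserted equation is equivalent to the scalar identities $e_k(MN)=e_k(NM)$ for all $k\geq0$, with the convention that $e_k$ vanishes once $k$ exceeds the size of the matrix in question. So it is enough to prove $e_k(MN)=e_k(NM)$ for every $k$.

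Two bookkeeping facts will be used repeatedly. First, because $M$ is $n\times m$, $N$ is $m\times n$, and every entry of $M$ commutes with every entry of $N$, one may slide the auxiliary $\mathbb{C}^m$-copies past the auxiliary $\mathbb{C}^n$-copies to obtain
\begin{equation*}
(MN)_1\cdots(MN)_k=(M_1\cdots M_k)(N_1\cdots N_k),\qquad (NM)_1\cdots(NM)_k=(N_1\cdots N_k)(M_1\cdots M_k),
\end{equation*}
read as maps between $(\mathbb{C}^n)^{\otimes k}$ and $(\mathbb{C}^m)^{\otimes k}$ factoring through the other tensor power; the blocks $M_1\cdots M_k$ and $N_1\cdots N_k$ are never permuted internally, so the noncommutativity of $\mathfrak{R}$ causes no trouble. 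Second, the partial trace is cyclic, $\mathrm{tr}_{1,\ldots,k}(XY)=\mathrm{tr}_{1,\ldots,k}(YX)$, whenever the matrix entries of $X$ commute with those of $Y$; in our use one factor is built from $M$ and the other from $N$, so this applies.

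The computation is then short. Using the $(\hat q,\hat p)$-Manin property of $M$ in the form $A_{\hat q}^{(k)}M_1\cdots M_k=A_{\hat q}^{(k)}M_1\cdots M_kA_{\hat p}^{(k)}$ of Proposition \ref{p:comm2}, together with the first bookkeeping fact, we get $e_k(MN)=\mathrm{tr}_{1,\ldots,k}\,A_{\hat q}^{(k)}(M_1\cdots M_k)A_{\hat p}^{(k)}(N_1\cdots N_k)$. Cycling the factor $A_{\hat p}^{(k)}(N_1\cdots N_k)$ (whose entries come from $N$ and commute with the rest, which come from $M$) to the front gives $\mathrm{tr}_{1,\ldots,k}A_{\hat p}^{(k)}(N_1\cdots N_k)A_{\hat q}^{(k)}(M_1\cdots M_k)$. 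Now apply Proposition \ref{p:comm2} to $N$, which is a $(\hat p,\hat q)$-Manin matrix (the roles of $\hat q$ and $\hat p$ interchanged): it gives $A_{\hat p}^{(k)}N_1\cdots N_k=A_{\hat p}^{(k)}N_1\cdots N_kA_{\hat q}^{(k)}$, hence $A_{\hat p}^{(k)}(N_1\cdots N_k)A_{\hat q}^{(k)}=A_{\hat p}^{(k)}(N_1\cdots N_k)$, and therefore
\begin{equation*}
e_k(MN)=\mathrm{tr}_{1,\ldots,k}\,A_{\hat p}^{(k)}(N_1\cdots N_k)(M_1\cdots M_k)=\mathrm{tr}_{1,\ldots,k}\,A_{\hat p}^{(k)}(NM)_1\cdots(NM)_k=e_k(NM).
\end{equation*}
For $k>m$ the operator $A_{\hat p}^{(k)}$ vanishes on $(\mathbb{C}^m)^{\otimes k}$ (every basis tensor then has a repeated index, and $A_{\hat p}^{(k)}$ factors through the pair antisymmetrizer $A_{\hat p}^{(2)}$, which kills $e_a\otimes e_a$ since $p_{aa}=1$), so $e_k(MN)=e_k(NM)=0$, and likewise $e_k(MN)=0$ for $k>n$. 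This finishes the proof. The only genuine obstacle is the bookkeeping: one must keep the auxiliary spaces $(\mathbb{C}^n)^{\otimes k}$ and $(\mathbb{C}^m)^{\otimes k}$ carefully separated through the sliding and the cyclic moves, and check at each reordering that only $M$-entries are commuted past $N$-entries, never $M$ past $M$ or $N$ past $N$.
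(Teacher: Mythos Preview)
Your proof is correct, and it takes a genuinely different route from the paper's. The paper establishes $e_k(MN)=e_k(NM)$ via the Cauchy--Binet formula (Proposition \ref{det cauchy-Binet}): it writes $e_k(MN)=\sum_{I}\cdet_{\hat q}((MN)_{II})$, expands each principal minor as $\sum_{J}\cdet_{\hat q}(M_{IJ})\cdet_{\hat p}(N_{JI})$, commutes the two determinantal factors (legitimate because the entries of $M$ and $N$ commute), and recognizes the result as $\sum_{J}\cdet_{\hat p}((NM)_{JJ})=e_k(NM)$ by Cauchy--Binet in the other direction. Your argument bypasses Cauchy--Binet entirely: you stay with the trace definition \eqref{e:elementary}, use Proposition \ref{p:comm2} to insert and remove antisymmetrizers, and apply the cyclicity of the full trace for operators whose entries mutually commute. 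This is shorter and more in the spirit of the R-matrix formalism developed in Section~\ref{s:gen-identities}; it also makes the vanishing for $k>\min(m,n)$ transparent through $A_{\hat p}^{(k)}=0$ on $(\mathbb{C}^m)^{\otimes k}$. The paper's route, on the other hand, exhibits the identity at the level of individual minors, which is informative in its own right and ties the result directly to the combinatorial machinery of Section~\ref{s:manin}.
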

\begin{proof}
The  characteristic polynomial of $MN$ and $NM$ are
\begin{align}
\mathrm{char}_{\hat q}(MN,t)=\sum_{k=0}^n(-1)^k e_k(MN) t^{n-k},\\
\mathrm{char}_{\hat p}(NM,t)=\sum_{k=0}^m(-1)^k e_k(NM) t^{m-k},
\end{align}

Without loss of generality, we assume that $n\leq m$.
It follows from the Cauchy-Binet  formula that
$
e_k(NM)=0$ for $k>n$. For $k\leq n$,

\begin{equation}
\begin{split}
e_k(MN)&=\sum_{I\subset[1,n]} {\cdet}_{\hat{q}}((MN)_{II})\\
&=\sum_{I\subset[1,n],J\subset[1,m]}{\cdet}_{\hat{q}}({M_{IJ}})
{\cdet}_{\hat{p}}({N_{JI}})\\
&=\sum_{I\subset[1,n],J\subset[1,m]}{\cdet}_{\hat{p}}({N_{JI}})
{\cdet}_{\hat{q}}{M_{IJ}})\\
&=\sum_{J\subset[1,m]} {\cdet}_{\hat{q}}((NM)_{JJ})=e_k(NM).
 \end{split}
\end{equation}
This completes the proof.
\end{proof}

\section{The multiparameter quantum  Yangian}\label{s:multiYangian}

The $q$-Yangian algebra \cite{Ch, NT} is a subalgebra of the quantum affine algebra $U_q(\widehat{gl}_n)$.
In this section we show that the matrix $T(u)$ of multiparameter quantum Yangian is a multiparameter Manin matrix.
Using the results in the previous sections we obtain some identities for multiparameter quantum Yangians.

Let $p_{ij}, q_{ij}$ $(1\leq i,j\leq n)$ and $u$ be parameters satisfying the following condition:
\begin{align}\label{e:cond-p}
&p_{ij}q_{ij}=u^2, \quad u^2\neq -1, \quad i<j.
\\
&p_{ij}p_{ji}=q_{ij}q_{ji}=1,\\
&p_{ii}=q_{ii}=1.
\end{align}
The multiparameter $R$ matrix $R(z)$ in $\mathrm{End}(\mathbb C^n\otimes \mathbb C^n)\simeq\mathrm{End}(\mathbb C^n)^{\ot 2}$ is given as:
\begin{equation}
\begin{split}
R(z)&=(zu-u^{-1})\sum_{i}e_{ii}\otimes e_{ii}
+\sum_{i< j}(zu^{-1}p_{ij}-uq_{ij}^{-1})e_{ii}\otimes e_{jj}\\
&+\sum_{i> j}(zu^{-1}q_{ji}-up_{ji}^{-1})e_{ii}\otimes e_{jj}
+z(u-u^{-1})\sum_{i<j}e_{ij}\otimes e_{ji}\\
&+(u-u^{-1}  )\sum_{i>j}e_{ij}\otimes e_{ji}
\end{split}
 \end{equation}
It satisfies the Yang-Baxter equation:
\begin{equation}\label{YBE}
R_{12}(z/w)R_{13}(z)R_{23}(w)=R_{23}(w)R_{13}(z)R_{12}(z/w).
\end{equation}

We define the {\it multiparameter quantum Yangian} $Y_{\hat p, u}(\gl_n)$ as the unital associative algebra generated by elements
$t_{ij}^{(r)}$, where $1\leq i,j\leq n$ and $r\geq 0$, such that $t^{(0)}_{ii}$ are invertible
and $ t_{ij}^{(0)}=0$ for $i<j$. These elements satisfies the
following relation
\begin{align}
 \begin{split}
\label{Yangian relation}
 R(z/w)T_1(z)T_2(w)&=T_2(w)T_1(z) R(z/w),
 \end{split}
\end{align}
where $T(z)=(t_{ij}(z))_{1\leq i,j\leq n}$ and
\begin{equation}
t_{ij}(z)=\sum_{r=0}^{\infty}t_{ij}^{(r)}z^{-r}.
\end{equation}

Denote $\hat{R}(z)=PR(z)$, then Eq. \eqref{Yangian relation} is equivalent to
\begin{align}
 \begin{split}
 \hat R(z/w)T_1(z)T_2(w)=T_1(z) T_2(w) \hat R(z/w).
 \end{split}
\end{align}
Therefore,
\begin{equation}\label{e:Ru}
\begin{split}
\hat R(u^{-2})=
(u-u^{-1})\left[\sum_{i<j}\left(E_{ii}\otimes E_{jj}-q_{ij}^{-1}E_{ji}\otimes E_{ij}\right) \right.\\
-\left.\sum_{i<j}p_{ij}^{-1} \left(E_{ij}\otimes E_{ji}-q_{ij}^{-1}E_{jj}\otimes E_{ii}\right) \right].
\end{split}
\end{equation}

We denote by $A^{(k)}$ the normalized antisymmetrizer:
\begin{align}
A^{(k)}=\frac{1}{[k]_{u^2}!}
\sum_{\sigma,\rho \in S_k
\atop  i_1<\cdots<i_k}
 \varepsilon(\hat p, I,\rho) \varepsilon(\hat q,I,\sigma)
e_{i_{\sigma(1)}i_{\rho(1)}}\ot \cdots \ot e_{i_{\sigma(k)}i_{\rho(k)}},
\end{align}
summed over all increasing multi-indices $I=(i_1,\ldots,i_k)$,
then $(A^{(n)})^2=A^{(n)}$.

Let  $R(\lambda_1,\ldots,\lambda_k)$ be the operator in $\mathrm{End}(\mathbb{C}^{n})^{\otimes k}$
defined by
\begin{equation}
R(\lambda_1,\ldots,\lambda_k)=(\hat R_{12}\cdots \hat R_{k-1,k})\cdots (\hat R_{12} \hat R_{23}) \hat R_{12},
\end{equation}
where $\hat R_{ij}=\hat R_{ij}(\lambda_j/\lambda_i)$. It follows from \eqref{e:Ru} that
\begin{equation}
\begin{split}
&R(1,u^{-2},\ldots,u^{-2k+2})\\
&=u^{\frac{k(k-1)}{2}} \prod_{0\leq i<j\leq k-1}(1-u^{2(i-j)})[k]_{u^2}! A^{(k)}.
\end{split}
\end{equation}

The following lemma is easy to verify.
\begin{lemma} We have the identities:
\begin{align}\label{eq antisym 1}
&A_{\hat p}^{(k)}A ^{(k)}=A_{\hat p}^{(k)}\\ \label{eq antisym 2}
&A ^{(k)}A_{\hat p}^{(k)}=A^{(k)}
\end{align}
\end{lemma}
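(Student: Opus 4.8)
The plan is to recognise that the two identities are together equivalent to the single fact that $A^{(k)}$ and $A_{\hat p}^{(k)}$ are idempotents with the same kernel, and then to compute that common kernel. For the first point, recall that if $P,Q$ are idempotents acting on one and the same space, then $\ker P=\ker Q$ forces $PQ=P$ and $QP=Q$: indeed $1-Q$ and $1-P$ are idempotents with image $\ker Q=\ker P$, so $P(1-Q)=0$ and $Q(1-P)=0$. Taking $P=A_{\hat p}^{(k)}$ and $Q=A^{(k)}$ this yields exactly \eqref{eq antisym 1} and \eqref{eq antisym 2}. Since $A_{\hat p}^{(k)}$ is idempotent by its construction in Section \ref{s:gen-identities} and $A^{(k)}$ is idempotent (shown just as for $(A^{(n)})^{2}=A^{(n)}$), it will be enough to prove $\ker A^{(k)}=\ker A_{\hat p}^{(k)}$.

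To do this I would evaluate both operators on the standard basis tensors $e_{c_1}\otimes\cdots\otimes e_{c_k}$. If $c_1,\dots,c_k$ are not pairwise distinct, both operators kill the tensor: for $A^{(k)}$ this is immediate from the defining sum, because each matrix-unit string $e_{i_{\sigma(1)}i_{\rho(1)}}\otimes\cdots\otimes e_{i_{\sigma(k)}i_{\rho(k)}}$ pairs nontrivially only with tensors whose index string is a permutation of an increasing multi-index; for $A_{\hat p}^{(k)}$ it follows from $A_{\hat p}^{(k)}\bigl(1+P_{\hat p}^{s_i}\bigr)=0$ after using a suitable $P_{\hat p}^{\pi}$ to move the coinciding pair of indices into adjacent slots. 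So the span of these degenerate tensors lies in both kernels, and it remains to compare the two operators on each block $V_I$ spanned by $\{e_{i_{\rho(1)}}\otimes\cdots\otimes e_{i_{\rho(k)}}:\rho\in S_k\}$ for an increasing multi-index $I=(i_1<\cdots<i_k)$. On $V_I$ a direct reading of the definitions gives $A^{(k)}\bigl(e_{i_{\rho(1)}}\otimes\cdots\otimes e_{i_{\rho(k)}}\bigr)=\gamma\,\varepsilon(\hat p,I,\rho)\,w_I$ for a nonzero scalar $\gamma$ and the fixed vector $w_I=\sum_{\sigma\in S_k}\varepsilon(\hat q,I,\sigma)\,e_{i_{\sigma(1)}}\otimes\cdots\otimes e_{i_{\sigma(k)}}$, and, using $A_{\hat p}^{(k)}P_{\hat p}^{s_i}=-A_{\hat p}^{(k)}$ (so that $A_{\hat p}^{(k)}$ acts as $\hat p$-antisymmetrization), $A_{\hat p}^{(k)}\bigl(e_{i_{\rho(1)}}\otimes\cdots\otimes e_{i_{\rho(k)}}\bigr)=\delta\,\varepsilon(\hat p,I,\rho)\,\omega_I$ for a nonzero scalar $\delta$ and a fixed vector $\omega_I\in V_I$. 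Hence inside $V_I$ both kernels are the same hyperplane, the vectors $\sum_\rho c_\rho\,e_{i_{\rho(1)}}\otimes\cdots\otimes e_{i_{\rho(k)}}$ with $\sum_\rho c_\rho\,\varepsilon(\hat p,I,\rho)=0$, which depends on $\hat p$ alone. Summing over all $I$ and restoring the degenerate part gives $\ker A^{(k)}=\ker A_{\hat p}^{(k)}$. (One can also avoid appealing to idempotency of $A^{(k)}$ and simply multiply the explicit formulas: the same evaluations of $A_{\hat p}^{(k)}$ on $w_I$ and of $A^{(k)}$ on $\omega_I$ collapse $A_{\hat p}^{(k)}A^{(k)}$ and $A^{(k)}A_{\hat p}^{(k)}$ on every basis vector to the desired value.)

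The purely mechanical ingredient throughout is bookkeeping with the $\varepsilon$-symbols, chiefly the identity $\varepsilon(\hat p,I,\rho s_i)=-\,p_{i_{\rho(i)}i_{\rho(i+1)}}^{-1}\,\varepsilon(\hat p,I,\rho)$, which both extracts the factors $\varepsilon(\hat p,I,\rho)$ above and forces the degenerate tensors to vanish. The single step in which the parameter constraints \eqref{e:cond-p} are used is fixing the normalising constant so that $A^{(k)}$ is genuinely idempotent (equivalently, so the two direct computations close up): this reduces to the identity $\sum_{\sigma\in S_k}\varepsilon(\hat p,I,\sigma)\,\varepsilon(\hat q,I,\sigma)=[k]_{u^{-2}}!$, which follows from $p_{ij}q_{ij}=u^{2}$ for $i<j$ and the standard fact $\sum_{\sigma\in S_k}t^{\ell(\sigma)}=[k]_t!$. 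I expect that normalisation check to be the only point where genuine care is needed.
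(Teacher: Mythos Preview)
Your argument is correct and supplies exactly what the paper omits: the paper records the lemma as ``easy to verify'' and gives no proof, so there is nothing to compare against beyond noting that your approach is a natural way to fill the gap.

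A couple of remarks. First, your reduction via ``two idempotents with the same kernel'' is clean, but note that equation \eqref{eq antisym 2} is homogeneous in $A^{(k)}$ and therefore does not need idempotency of $A^{(k)}$ at all; your parenthetical direct multiplication handles \eqref{eq antisym 2} immediately from $A^{(k)}(e_{i_\rho})=\tfrac{1}{[k]_{u^2}!}\varepsilon(\hat p,I,\rho)\,w_I$ and $A_{\hat p}^{(k)}(e_{i_\rho})=\varepsilon(\hat p,I,\rho)\,A_{\hat p}^{(k)}(e_I)$. Second, you are right that the only substantive computation is the normalisation identity $\sum_{\sigma\in S_k}\varepsilon(\hat p,I,\sigma)\,\varepsilon(\hat q,I,\sigma)=\sum_{\sigma}u^{-2\ell(\sigma)}$, which is where \eqref{e:cond-p} enters via $p_{i_a i_b}q_{i_a i_b}=u^{-2}$ for $a>b$. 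This is precisely what forces equation \eqref{eq antisym 1} (which is \emph{not} homogeneous in $A^{(k)}$) to close up with the paper's normalisation constant; your caution about the convention for $[k]_{u^2}!$ is well placed, since the identity requires that constant to equal $\sum_{\sigma}u^{-2\ell(\sigma)}$.
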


\begin{proposition}
The matrix  $M=T(z)u^{2z\frac{\partial}{\partial_z}}$ is a $\hat p$-Manin matrix.
\end{proposition}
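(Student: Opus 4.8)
The plan is to deduce the defining relation $A_{\hat p}M_1M_2S_{\hat p}=0$ of a $(\hat p)$-Manin matrix from the $RTT$-relation \eqref{Yangian relation} specialized at the distinguished point $z/w=u^{-2}$, together with the proportionality $\hat R(u^{-2})\propto A^{(2)}$ coming from the fusion formula and the intertwining identities \eqref{eq antisym 1}, \eqref{eq antisym 2}.

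First I would make explicit how the shift operator interacts with $T(z)$. Write $\delta=u^{2z\partial/\partial z}$, so that $(\delta f)(z)=f(u^2z)$ and $\delta\, t_{ij}(z)=t_{ij}(u^2z)\,\delta$; the entries $M_{ij}=t_{ij}(z)\,\delta$ lie in the algebra $\mathfrak R$ generated by the $t_{ij}(z)$ and $\delta$. Then $M_{ij}M_{kl}=t_{ij}(z)\,t_{kl}(u^2z)\,\delta^2$, whence in $\mathrm{End}(\mathbb C^n)^{\otimes 2}\otimes\mathfrak R$
\begin{equation}
M_1M_2=T_1(z)\,T_2(u^2z)\,\delta^2 .
\end{equation}
Since $\delta^2\in\mathfrak R$ commutes with $A_{\hat p}$ and $S_{\hat p}$, which act on the two auxiliary copies of $\mathbb C^n$, the relation $A_{\hat p}M_1M_2S_{\hat p}=0$ is equivalent to
\begin{equation}
A_{\hat p}\,T_1(z)\,T_2(u^2z)\,S_{\hat p}=0,
\end{equation}
read coefficient-wise in $z$; here the substitution $w=u^2z$ is legitimate because $T_2(u^2z)=\sum_{r\ge0}(u^{-2r}t^{(r)}_{ij})z^{-r}$ is still a well-defined formal series.

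Next I would invoke the fusion formula for $k=2$, which gives $\hat R(u^{-2})=R(1,u^{-2})=c\,A^{(2)}$ with $c=u\,(1-u^{-2})\,[2]_{u^2}!$ a nonzero scalar under the hypotheses on the parameters; thus $A^{(2)}=c^{-1}\hat R(u^{-2})$. By \eqref{eq antisym 1} one has $A_{\hat p}=A_{\hat p}A^{(2)}=c^{-1}A_{\hat p}\hat R(u^{-2})$, so
\begin{equation}
A_{\hat p}\,T_1(z)\,T_2(u^2z)\,S_{\hat p}
= c^{-1}\,A_{\hat p}\,\hat R(u^{-2})\,T_1(z)\,T_2(u^2z)\,S_{\hat p}.
\end{equation}
Applying the $RTT$-relation \eqref{Yangian relation} with $z/w=u^{-2}$ carries $\hat R(u^{-2})$ past $T_1(z)T_2(u^2z)$, turning the right-hand side into $A_{\hat p}\,T_1(z)\,T_2(u^2z)\,A^{(2)}S_{\hat p}$. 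Finally, by \eqref{eq antisym 2}, $A^{(2)}S_{\hat p}=A^{(2)}(1-A_{\hat p})=A^{(2)}-A^{(2)}A_{\hat p}=0$, so the whole expression vanishes and $M$ is a $(\hat p)$-Manin matrix.

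The computation itself is short; the point that needs care is the bookkeeping with the shift operator---checking that the two copies of $\delta$ in $M_1M_2$ really collect into $\delta^2$ after commuting $\delta$ through $T_2(z)$, and that the evaluation $w=u^2z$ of \eqref{Yangian relation} is a genuine identity of formal power series rather than a purely formal manipulation. Once these are in place, the argument reduces to the interplay of $A^{(2)}$ and $A_{\hat p}$ recorded in the Lemma together with $\hat R(u^{-2})\propto A^{(2)}$.
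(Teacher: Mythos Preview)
Your proof is correct and follows essentially the same route as the paper: specialize the $\hat R TT$-relation at $z/w=u^{-2}$, use the fusion identity $\hat R(u^{-2})=c\,A^{(2)}$, and combine with the intertwining relations \eqref{eq antisym 1}--\eqref{eq antisym 2} between $A^{(2)}$ and $A_{\hat p}$. The only cosmetic difference is that the paper records the conclusion as $A_{\hat p}M_1M_2A_{\hat p}=A_{\hat p}M_1M_2$, whereas you phrase it via $A^{(2)}S_{\hat p}=0$; these are equivalent, and your version is arguably cleaner since it makes the vanishing of $A_{\hat p}M_1M_2S_{\hat p}$ immediate.
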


\begin{proof}
Take $w=u^2 z$ we have that
 \begin{equation}
\hat R(u^{-2})T_{1}(z)T_2(u^2z )= T_{1}(z) T_2(u^2z ) \hat R(u^{-2}).
 \end{equation}
Multiplying both sides of
the equation by the operator $u^{4z\frac{\partial}{\partial_z}}$ from the right,
we obtain
\begin{equation}
\hat R(u^{-2})T_{1}(z)u^{2z\frac{\partial}{\partial_z}} T_2(z )u^{2z\frac{\partial}{\partial_z}}
= T_1(z )u^{2z\frac{\partial}{\partial_z}}T_{2}(z)  u^{2z\frac{\partial}{\partial_z}} \hat R(u^{-2}).
\end{equation}
Together with the equations \eqref{eq antisym 1} and \eqref{eq antisym 2},
we have that

\begin{equation}
\begin{split}
A_{\hat p}M_1M_2A_{\hat p}=A_{\hat p}M_1M_2
\end{split}
\end{equation}
\end{proof}

The element $A^{(k)}T_1(z)T_2(u^2z)\cdots T_k(u^{2k-2}z)$ can be written as
\begin{equation}
\sum_{I,J} \text{qdet}(T(z)_{IJ})\otimes e_{i_1 j_1}\otimes \dots \otimes e_{i_mj_m},
\end{equation}
summed over all the multi-indices $I=(i_1,\ldots,i_k)$, $J=(j_1,\ldots,j_k)$.
If $I$ is increasing, $\text{qdet}(T(z)_{IJ})$ can be written as
\begin{equation}
\text{qdet}(T(z)_{IJ})=\sum_{\sigma\in S_k} \varepsilon(\hat p, I,\sigma) t_{i_{\sigma_1}j_1}(z)\cdots t_{i_{\sigma_k}j_k}(u^{2k-2}z).
\end{equation}

%If  $J$ is increasing, the determinant is equal to
%\begin{equation}
%\text{qdet}(T(z)_{IJ})=\sum_{\sigma\in S_k} \varepsilon(\hat q, J,\sigma) t_{i_1 j_{\sigma_1}}(z)\cdots
%t_{i_k j_{\sigma_k}}(u^{2k-2}z).
%\end{equation}
In particular, when $I=J=(1,2,\cdots,n)$, it is called the determinant of $T(z)$.
\begin{proposition}
The $\hat p$-determinant of $M=T(z)u^{2z\frac{\partial}{\partial_z}}$ satisfies
\begin{equation}
{\cdet}_{\hat p}M_{IJ}=\qdet(L(z))u^{2kz\frac{\partial}{\partial_z}}
\end{equation}
where $I=(i_1,\ldots ,i_k)$ and $J=(j_1,\ldots ,j_k)$ are multi-index and $I$ is increasing.
\end{proposition}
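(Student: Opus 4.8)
The plan is to prove the identity by a direct computation, interpreting $\qdet(L(z))$ on the right-hand side as $\qdet(T(z)_{IJ})$ with the staggered arguments prescribed in the formula recorded just before the proposition. Write $M_{ij}=t_{ij}(z)\,u^{2z\frac{\partial}{\partial_z}}$, and recall that the shift operator obeys the conjugation rule $u^{2z\frac{\partial}{\partial_z}}\,g(z)=g(u^2z)\,u^{2z\frac{\partial}{\partial_z}}$ for any power series $g$ in $z$; in particular it commutes with every constant and, acting only on the spectral parameter, it is central with respect to the auxiliary tensor indices. The whole argument consists of pushing all copies of $u^{2z\frac{\partial}{\partial_z}}$ to the far right of each monomial occurring in ${\cdet}_{\hat p}(M_{IJ})$.

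First I would establish, for each fixed $\sigma\in S_k$, the monomial identity
\begin{equation*}
M_{i_{\sigma(1)}j_1}M_{i_{\sigma(2)}j_2}\cdots M_{i_{\sigma(k)}j_k}
=t_{i_{\sigma(1)}j_1}(z)\,t_{i_{\sigma(2)}j_2}(u^2z)\cdots t_{i_{\sigma(k)}j_k}(u^{2k-2}z)\,u^{2kz\frac{\partial}{\partial_z}}.
\end{equation*}
This is proved by induction on $k$: once the first $\ell$ factors have been rewritten as a product of $t$'s followed by $u^{2\ell z\frac{\partial}{\partial_z}}$, moving this shift operator past the next factor $t_{i_{\sigma(\ell+1)}j_{\ell+1}}(z)$ rescales its argument to $u^{2\ell}z$ and the two shift operators merge into $u^{2(\ell+1)z\frac{\partial}{\partial_z}}$. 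Since $I$ is increasing, the coefficients $\varepsilon(\hat p,I,\sigma)$ are scalars, so they pass through $u^{2kz\frac{\partial}{\partial_z}}$; multiplying the monomial identity by $\varepsilon(\hat p,I,\sigma)$ and summing over $\sigma\in S_k$ yields
\begin{equation*}
{\cdet}_{\hat p}(M_{IJ})
=\Big(\sum_{\sigma\in S_k}\varepsilon(\hat p,I,\sigma)\,t_{i_{\sigma(1)}j_1}(z)\,t_{i_{\sigma(2)}j_2}(u^2z)\cdots t_{i_{\sigma(k)}j_k}(u^{2k-2}z)\Big)\,u^{2kz\frac{\partial}{\partial_z}}.
\end{equation*}
The bracketed expression is precisely $\qdet(T(z)_{IJ})$ by its definition, which proves the proposition.

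The computation is short, and the only place where care is needed is the bookkeeping of the spectral-parameter shifts: one must verify that the $\ell$-th factor of the product acquires exactly the argument $u^{2\ell-2}z$, so that the staggered arguments $z,u^2z,\ldots,u^{2k-2}z$ of the right-hand side are reproduced, and that $u^{2kz\frac{\partial}{\partial_z}}$ factors out cleanly (it touches only $z$, not the matrix indices). Both facts follow immediately from the conjugation rule above, so there is no serious obstacle; the case $I=J=(1,\ldots,n)$, $k=n$ then gives ${\cdet}_{\hat p}(M)=\qdet(T(z))\,u^{2nz\frac{\partial}{\partial_z}}$ as a special case.
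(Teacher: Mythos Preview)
Your proof is correct and follows essentially the same route as the paper: expand ${\cdet}_{\hat p}(M_{IJ})$ by definition, use the conjugation rule $u^{2z\partial/\partial_z}g(z)=g(u^2z)u^{2z\partial/\partial_z}$ to push all shift operators to the right (producing the staggered arguments $z,u^2z,\ldots,u^{2k-2}z$), and recognise the remaining sum as $\qdet(T(z)_{IJ})$. You have simply spelled out in detail the monomial-by-monomial induction that the paper compresses into a single displayed chain of equalities, and you correctly interpreted the evident typos ($L(z)$ for $T(z)_{IJ}$, and $\hat q$ for $\hat p$ in the paper's proof).
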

\begin{proof}  By definition we have that
\begin{equation}
\begin{split}
{\cdet}_{\hat{p}}(M_{IJ})
&=\sum_{\sigma\in S_r}  \varepsilon({\hat{p}},I,\sigma) M_{i_{\sigma(1)},j_1}\cdots M_{i_{\sigma(r)},j_r}\\
&=\sum_{\sigma\in S_r}  \varepsilon({\hat{p}},I,\sigma) t_{i_{\sigma(1)},j_1}(z)\cdots t_{i_{\sigma(r)},j_r}(u^{2k-2}z)u^{2kz\frac{\partial}{\partial_z}}\\
&=\qdet(T(z))u^{2kz\frac{\partial}{\partial_z}}.
\end{split}
\end{equation}

\end{proof}
Let $e_k(T(z))=\sum_{I}\qdet(L(z)_{II})$, where the sum is over all increasing multi-indices $I$.
Recall that $B*C=tr_1P_{\hat p} B_1C_2$.
Let $T(z)^{[k]}$ be the $k$th power of $T(z)$ under the multiplication $*$, i.e.
\begin{align}
T(z) ^{[0]}=1, T(z)^{[k]}=T(z)^{[k-1]}*T(u^{2k-2}z),k>1.
\end{align}

\begin{proposition}
The following is the Cayley-Hamilton theorem for the $\hat p$-Yangian.
\begin{equation}
\sum_{k=0}^n(-1)^k e_k(T(z)) T(u^{2k}z)^{[n-k]}=0.
\end{equation}
\end{proposition}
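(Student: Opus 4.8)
The plan is to derive this identity from the Cayley--Hamilton theorem already established for $(\hat q)$-Manin matrices, applied to the $\hat p$-Manin matrix $M=T(z)\,\mathcal D$, where $\mathcal D=u^{2z\partial/\partial z}$ is the (invertible) operator implementing the substitution $z\mapsto u^{2}z$. The two properties of $\mathcal D$ we need are $\mathcal D\,T_a(z)=T_a(u^{2}z)\,\mathcal D$ for every tensor slot $a$, and that $\mathcal D$ commutes with every operator acting only on the auxiliary spaces $(\mathbb C^{n})^{\otimes k}$ (in particular with $P_{\hat p}$ and the (anti)symmetrizers). The entries of $M$ lie in the algebra $\mathfrak R$ generated by the $t_{ij}^{(r)}$ together with $\mathcal D^{\pm1}$, so the $A$-Manin matrix formalism of Section~\ref{s:cayley-hamilton} applies to $M$; since $\hat p$ is a parametric matrix, $P_{\hat p}$ satisfies the braid relation, and the proposition that $M=T(z)\mathcal D$ is a $\hat p$-Manin matrix lets us invoke the Cayley--Hamilton theorem to get
\begin{equation*}
\sum_{k=0}^{n}(-1)^{k}\,e_{k}(M)\,M^{[n-k]}=0,
\end{equation*}
where $e_{k}(M)=\mathrm{tr}_{1,\dots,k}A_{\hat p}^{(k)}M_{1}\cdots M_{k}$ and $M^{[m]}$ is the $m$-th power with respect to $B*C=\mathrm{tr}_{1}P_{\hat p}B_{1}C_{2}$.

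It then remains to translate the two ingredients into Yangian notation. For the elementary symmetric functions, I would use the standard expansion $e_{k}(M)=\sum_{I}{\cdet}_{\hat p}(M_{II})$ (sum over increasing multi-indices $I$ with $|I|=k$, valid for any $\hat p$-Manin matrix) together with the preceding proposition ${\cdet}_{\hat p}(M_{II})=\qdet(T(z)_{II})\,\mathcal D^{k}$; this yields $e_{k}(M)=e_{k}(T(z))\,\mathcal D^{k}$. For the $*$-powers, the computation $M_{1}\cdots M_{m}=T_{1}(z)T_{2}(u^{2}z)\cdots T_{m}(u^{2m-2}z)\,\mathcal D^{m}$, obtained by moving each $\mathcal D$ to the right past the remaining $T$'s, combined with a straightforward induction on $m$ using the definition of $*$, shows that $M^{[m]}=T(z)^{[m]}\,\mathcal D^{m}$, with $T(z)^{[m]}$ exactly the power introduced just before the statement.

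Substituting these into the Cayley--Hamilton relation and using $\mathcal D^{k}T(z)^{[n-k]}=T(u^{2k}z)^{[n-k]}\mathcal D^{k}$ (again, $\mathcal D$ implements $z\mapsto u^{2}z$), each term becomes
\begin{equation*}
(-1)^{k}e_{k}(M)M^{[n-k]}=(-1)^{k}\,e_{k}(T(z))\,T(u^{2k}z)^{[n-k]}\,\mathcal D^{n},
\end{equation*}
so that $\bigl(\sum_{k=0}^{n}(-1)^{k}e_{k}(T(z))\,T(u^{2k}z)^{[n-k]}\bigr)\mathcal D^{n}=0$. Since $\mathcal D^{n}=u^{2nz\partial/\partial z}$ is invertible, the bracketed sum vanishes, which is precisely the asserted identity.

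The work here is bookkeeping rather than conceptual: the delicate point is to keep exact track of where $\mathcal D$ sits, so that the spectral parameters in $M^{[m]}$ line up as $z,u^{2}z,\dots,u^{2m-2}z$, and so that commuting the residual factor $\mathcal D^{k}$ coming from $e_{k}(M)$ through $M^{[n-k]}$ produces exactly $T(u^{2k}z)^{[n-k]}$ and not some other shift. A secondary point to check is that the completion of $\mathfrak R$ needed to accommodate the series $t_{ij}(z)$ is still a legitimate base ring for the Manin-matrix identities of the earlier sections; this is clear because every identity involved is finite in each fixed power of $z$.
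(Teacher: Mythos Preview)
Your proposal is correct and follows essentially the same route as the paper: apply the Cayley--Hamilton theorem for $\hat p$-Manin matrices to $M=T(z)\,u^{2z\partial/\partial z}$, then peel off the invertible shift operator $u^{2nz\partial/\partial z}$ on the right. The paper's own proof is a two-line version of exactly this argument; you have simply spelled out the intermediate identifications $e_{k}(M)=e_{k}(T(z))\,\mathcal D^{k}$ and $M^{[m]}=T(z)^{[m]}\,\mathcal D^{m}$ that the paper leaves implicit.
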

\begin{proof}
By the  Cayley-Hamilton theorem for the Manin matrix $T(z)u^{2z\frac{\partial}{\partial_z}}$ we have that
\begin{equation}
\sum_{k=0}^n(-1)^k e_k(T(z)) T(u^{2k}z)^{[n-k]}u^{2nz\frac{\partial}{\partial_z}}=0.
\end{equation}
This completes the proof.
\end{proof}

The following result follows from MacMahon's Theorem for the Manin matrix $T(z)u^{2z\frac{\partial}{\partial_z}}$.
\begin{theorem}The following are Muir's identities for the multiparametric $q$-Yangian.
\begin{equation}
\sum_{r=0}^k (-1)^r tr_{1,\ldots,k}S_{\hat p}^{(r)}A_{\hat p}^{\{r+1,\ldots,k\}}
T_1(z)\cdots T_k(u^{2k-2}z)=0.
\end{equation}
\begin{equation}
\sum_{r=0}^k (-1)^r tr_{1,\ldots,k}A_{\hat p}^{(r)}S_{\hat p}^{\{r+1,\ldots,k\}}
T_1(z)\cdots T_k(u^{2k-2}z)=0.
\end{equation}
\end{theorem}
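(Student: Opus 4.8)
The plan is to read off both identities from the generalized MacMahon theorem of Section~\ref{s:macmahon}, applied to the $\hat p$-Manin matrix $M=T(z)u^{2z\frac{\partial}{\partial_z}}$. By the preceding proposition this $M$ is indeed a $\hat p$-Manin matrix, and by the remark following the generalized MacMahon theorem the operator $P_{\hat p}$ satisfies the braid relation; hence that theorem and, more importantly, the telescoping identity established inside its proof apply to $M$ with $A=A_{\hat p}$, $S^{(r)}=S_{\hat p}^{(r)}$, $A^{(r)}=A_{\hat p}^{(r)}$.

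First I would record the product formula $M_1M_2\cdots M_k=T_1(z)T_2(u^2z)\cdots T_k(u^{2k-2}z)\,u^{2kz\frac{\partial}{\partial_z}}$, obtained by repeatedly pushing the shift operators to the right via $u^{2z\frac{\partial}{\partial_z}}f(z)=f(u^2z)u^{2z\frac{\partial}{\partial_z}}$. The collected shift operator $u^{2kz\frac{\partial}{\partial_z}}$ lives in the coefficient algebra and commutes past the partial traces over the auxiliary tensor factors, so $tr_{1,\ldots,k}\big(X\,u^{2kz\frac{\partial}{\partial_z}}\big)=\big(tr_{1,\ldots,k}X\big)u^{2kz\frac{\partial}{\partial_z}}$ for any $X\in\mathfrak R\otimes\mathrm{End}({\mathbb C^n})^{\otimes k}$; and $u^{2kz\frac{\partial}{\partial_z}}$ is invertible, so it may be cancelled at the end.

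Next, the proof of the generalized MacMahon theorem shows, for every $A$-Manin matrix and every $k\geq 1$, that the telescoping sum $\sum_{r=0}^k(-1)^r tr_{1,\ldots,k}S^{(r)}A^{\{r+1,\ldots,k\}}M_1\cdots M_k=0$. Applying this with $A=A_{\hat p}$ and the above $M$, substituting the product formula, and stripping the factor $u^{2kz\frac{\partial}{\partial_z}}$ on the right, gives exactly the first Muir identity for $T(z)$. For the second identity I would run the dual argument: since $\mathrm{Bos}$ and $\mathrm{Ferm}$ both have leading term $1$ they are mutually inverse in the filtered completion, so $\mathrm{Bos}\times\mathrm{Ferm}=1$ forces $\mathrm{Ferm}\times\mathrm{Bos}=1$; extracting the degree-$k$ part of the latter — and using the disjoint-support commutations of $M_1\cdots M_r$ with $S^{\{r+1,\ldots,k\}}$ and of $M_{r+1}\cdots M_k$ with $A^{(r)}$ to collapse the product of two partial traces into a single trace — yields $\sum_{r=0}^k(-1)^r tr_{1,\ldots,k}A^{(r)}S^{\{r+1,\ldots,k\}}M_1\cdots M_k=0$, from which the second Muir identity follows after the same cancellation. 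Equivalently one may simply re-run the telescoping computation verbatim with the roles of the symmetrizer and antisymmetrizer interchanged.

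Everything here is routine once the earlier results are invoked; the only point demanding care is the bookkeeping of the difference operator $u^{2z\frac{\partial}{\partial_z}}$ — checking that it migrates cleanly to the right of each partial trace without ever mixing into the auxiliary $\mathrm{End}(\mathbb C^n)$ factors, and that the induced shifts $z\mapsto u^2z,\dots,u^{2k-2}z$ match precisely those in $T_1(z)\cdots T_k(u^{2k-2}z)$. That alignment is the one place where a spurious sign or spectral shift could creep in, so I would verify the product formula for $M_1\cdots M_k$ explicitly for small $k$ before stating it in general.
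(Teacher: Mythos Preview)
Your proposal is correct and matches the paper's approach: the paper simply states that the result follows from the MacMahon theorem applied to the $\hat p$-Manin matrix $T(z)u^{2z\frac{\partial}{\partial_z}}$, and you have filled in exactly the details needed---the product formula $M_1\cdots M_k=T_1(z)\cdots T_k(u^{2k-2}z)\,u^{2kz\frac{\partial}{\partial_z}}$, the fact that the shift operator passes through the partial traces and can be cancelled, and the dual telescoping for the second identity. Nothing further is required.
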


Let $h_k(T(z))=tr_{1,\ldots,k} S_{\hat p}^{(k)} T_1(z)\cdots T_k(u^{2k-2}z)$. Then the following result follows from Newton's identities for  Manin matrices.
\begin{theorem} The following equations are Newton-like identities for the multiparametric quantum Yangian:
\begin{align}
&ke_k(T(z))=\sum_{i=0}^{k-1}(-1)^{k+i+1} e_i(T(z)) tr T(u^{2i}z)^{[k-i]},\\
&kh_k(T(z)) =\sum_{i=1}^{k}  tr T(z)^{[i]}   h_{k-i}(T(u^{2i}z)).
\end{align}
\end{theorem}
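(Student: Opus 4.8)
The plan is to obtain both identities by specializing the Newton identities for $A$-Manin matrices of Section~\ref{s:macmahon} to the $\hat p$-Manin matrix $M=T(z)u^{2z\frac{\partial}{\partial_z}}$, exactly in the way the Cayley--Hamilton theorem for the $\hat p$-Yangian was derived above. This is legitimate because $P_{\hat p}$ satisfies the braid relation, so $M$ falls under the hypotheses of that section with $A=A_{\hat p}$. The first step is to record the shift rule $u^{2az\frac{\partial}{\partial_z}}f(z)=f(u^{2a}z)\,u^{2az\frac{\partial}{\partial_z}}$ for any $z$-dependent scalar- or matrix-valued $f$, from which an easy induction gives
\begin{equation*}
M_1M_2\cdots M_k=T_1(z)T_2(u^2z)\cdots T_k(u^{2k-2}z)\,u^{2kz\frac{\partial}{\partial_z}} .
\end{equation*}
Applying $A_{\hat p}^{(k)}$ (resp.\ $S_{\hat p}^{(k)}$) and the partial trace, and invoking the identity ${\cdet}_{\hat p}(M_{IJ})=\qdet(T(z)_{IJ})u^{2kz\frac{\partial}{\partial_z}}$ proved above together with $e_k=\sum_I{\cdet}_{\hat p}(M_{II})$, one reads off $e_k(M)=e_k(T(z))\,u^{2kz\frac{\partial}{\partial_z}}$ and $h_k(M)=h_k(T(z))\,u^{2kz\frac{\partial}{\partial_z}}$. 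A second, similar induction, using $B*C=tr_1P_{\hat p}B_1C_2$ and the shift rule, shows $M^{[k]}=T(z)^{[k]}u^{2kz\frac{\partial}{\partial_z}}$, hence $tr\,M^{[k]}=(tr\,T(z)^{[k]})\,u^{2kz\frac{\partial}{\partial_z}}$, where $T(z)^{[k]}=T(z)^{[k-1]}*T(u^{2k-2}z)$ as above.

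Next I would quote the Newton identities for the $A_{\hat p}$-Manin matrix $M$: taking traces in Lemma~\ref{lemm newton} gives $ke_k(M)=\sum_{i=0}^{k-1}(-1)^{k+i+1}e_i(M)\,tr\,M^{[k-i]}$, while comparing the coefficient of $t^{k-1}$ in the identity $\partial_tS(t)=T(t)S(t)$ gives $kh_k(M)=\sum_{i=1}^{k}tr\,M^{[i]}\,h_{k-i}(M)$. Substituting the factorizations above and using the shift rule to push every difference operator to the far right, the generic summands become
\begin{equation*}
e_i(M)\,tr\,M^{[k-i]}=e_i(T(z))\,tr\,T(u^{2i}z)^{[k-i]}\,u^{2kz\frac{\partial}{\partial_z}},\qquad
tr\,M^{[i]}\,h_{k-i}(M)=tr\,T(z)^{[i]}\,h_{k-i}(T(u^{2i}z))\,u^{2kz\frac{\partial}{\partial_z}} .
\end{equation*}
Both identities for $M$ then carry the common invertible factor $u^{2kz\frac{\partial}{\partial_z}}$ on the right, and cancelling it yields precisely
\begin{equation*}
ke_k(T(z))=\sum_{i=0}^{k-1}(-1)^{k+i+1}e_i(T(z))\,tr\,T(u^{2i}z)^{[k-i]},\qquad
kh_k(T(z))=\sum_{i=1}^{k}tr\,T(z)^{[i]}\,h_{k-i}(T(u^{2i}z)) .
\end{equation*}

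The routine part is the two inductions that establish the factorizations of $M_1\cdots M_k$ and of $M^{[k]}$. The only place that calls for genuine care is the bookkeeping of spectral shifts: one must track exactly which power of $u^{2}$ multiplies $z$ after each difference operator is commuted past a $z$-dependent factor, and keep the non-commuting elements (for instance $e_i$ versus $tr\,T^{[k-i]}$, or $tr\,T^{[i]}$ versus $h_{k-i}$) in their correct left-to-right order throughout. Once the shift rule is applied consistently, the derivation is a direct transcription of the Manin-matrix Newton identities, in complete parallel with the proof of the Cayley--Hamilton theorem for the $\hat p$-Yangian given above.
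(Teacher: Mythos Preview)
Your proposal is correct and is exactly the approach the paper takes: the paper simply says the result ``follows from Newton's identities for Manin matrices'' applied to $M=T(z)u^{2z\frac{\partial}{\partial_z}}$, and you have spelled out precisely the details (the shift rule, the factorizations $e_k(M)=e_k(T(z))u^{2kz\frac{\partial}{\partial_z}}$, $h_k(M)=h_k(T(z))u^{2kz\frac{\partial}{\partial_z}}$, $M^{[k]}=T(z)^{[k]}u^{2kz\frac{\partial}{\partial_z}}$, and the cancellation of the common invertible operator) that the paper leaves implicit.
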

\bigskip
\centerline{\bf Acknowledgments}
\medskip
The work is supported in part by the National Natural Science Foundation of China grant nos.
12171303 and 12001218, the Humboldt Foundation, the Simons Foundation grant no. 523868,
and the Fundamental Research Funds for the Central Universities grant no. CCNU22QN002.

\bibliographystyle{amsalpha}

\end{document}